\title{Reducing Marketplace Interference Bias Via Shadow Prices}
\author{Ido Bright$^1$, Arthur Delarue$^2$, Ilan Lobel$^3$}
\date{\footnotesize$^1$Lyft, $^2$Stewart School of Industrial and Systems Engineering, Georgia Institute of Technology, $^3$NYU Stern School of Business}
\newtheorem{theorem}{Theorem}
\newtheorem{corollary}{Corollary}
\newtheorem{proposition}{Proposition}
\newtheorem{definition}{Definition}
\newtheorem{assumption}{Assumption}
\newtheorem{lemma}{Lemma}
\newtheorem*{thm:main}{Theorem \ref{thm:main} (restated)}
\newcommand{\control}{\text{control}}
\newcommand{\treatment}{\text{treatment}}
\newcommand{\experiment}{\text{experiment}}
\newcommand{\treatmentemph}{\text{\emph{treatment}}}
\newcommand{\controlemph}{\text{\emph{control}}}
\newcommand{\experimentemph}{\text{\emph{experiment}}}
\newcommand{\abs}[1]{\left|#1\right|}
\newcommand{\norm}[1]{\left\lVert#1\right\rVert}
\newcommand{\Var}[1]{\mathrm{Var}\left[#1\right]}
\newcommand{\revision}[1]{{\color{black}#1}}
\begin{document}
\maketitle

\begin{abstract}
Marketplace companies rely heavily on experimentation when making changes to the design or operation of their platforms. The workhorse of experimentation is the randomized controlled trial (RCT), or A/B test, in which users are randomly assigned to treatment or control groups.  However, marketplace interference causes the Stable Unit Treatment Value Assumption (SUTVA) to be violated, leading to bias in the standard RCT metric. In this work, we propose {techniques} for platforms to run standard RCTs and still obtain meaningful estimates despite the presence of marketplace interference. We specifically consider a {generalized} matching setting, in which the platform explicitly matches supply with demand via a linear programming algorithm. Our first proposal is for the platform to estimate the value of global treatment and global control via optimization. We prove that this approach is unbiased in the fluid limit. Our second proposal is to compare the average shadow price of the treatment and control groups rather than the total value accrued by each group. We prove that {this} technique corresponds to the correct first-order approximation (in a Taylor series sense) of the value function of interest {even in a finite-size system}. We then use this result to prove that, under reasonable assumptions, our estimator is less biased than the RCT estimator. At the heart of our result is the idea that it is relatively easy to model interference in matching-driven marketplaces since, in such markets, the platform mediates the spillover.
\end{abstract}

\section{Introduction}\label{sec:intro}

Modern technology companies use experiments to make \revision{many product} decisions \revision{both} minor (tweaking parameters) \revision{and} major (shipping new features). \revision{A} fundamental challenge in \revision{platform} experimentation is marketplace interference. For instance, consider a ride-hailing platform experimenting with a demand-side price discount. The platform performs a randomized control trial (RCT), or A/B test, where some demand units are offered the \revision{discount} while others are \revision{not}. Because the treated units are more likely to book rides as a result of the discount, they reduce the total supply available to all demand-side units, including the control units. This interference between treatment and control units causes the Stable Unit Treatment Value Assumption (SUTVA) to fail, and induces bias in the standard estimator used to \revision{measure} the value generated by the treatment. 

A standard answer to this problem is to replace the ``user-split'' design, where individual units are randomly assigned to treatment and control, with a ``time-split'' (or ``switchback'') design, where the entire market switches repeatedly between treatment and control. However, this design is \revision{in many cases} infeasible. Consider, for example, a user interface change that a user finds jarring or confusing at first, but eventually prefers to the original design after repeated exposure. A user-split experiment could easily capture such an effect, and analysts could even discard data from users' adjustment period at the start of the experiment. Meanwhile, a time-split design would repeatedly  alternate between user interfaces over the duration of the experiment, a clearly undesirable outcome.

Instead of modifying the experimental design, \revision{in this paper} we propose a method for directly removing most interference bias from user-split experiments. Most existing works assume that removing bias from a marketplace user-split experiment is a very difficult task. Often, these papers consider experiments in platforms like Airbnb or Amazon where interference is caused by customer choice, which is difficult to model correctly \revision{and tractably}. In contrast, we consider generalized matching marketplaces, where the platform assigns supply units to demand units via a matching or network flow algorithm. Examples of such settings include ride-hailing, delivery, and supply chains. We argue that because such generalized matching platforms control the flow of units through the system, they have a great deal of knowledge about the interference structure. They can leverage this \revision{information} to remove most of the marketplace interference bias from user-split experiments. 

We consider a model with a finite number of supply and demand types. Each type has a given Poisson arrival rate under the control policy. \revision{We consider a demand-side treatment which affects these demand arrival rates.} We do not assume the platform knows any of these arrival rates.
The platform has a matching value $v_{i,j}$ for assigning a demand unit of type $i$ to a supply unit of type $j$.
After collecting  demand and supply requests for a certain amount of time, the platform runs a \revision{matching} cycle where it assigns supply units to demand units via a network flow linear program (LP). The platform's goal is to quantify the global treatment effect, i.e., the difference in value between the setting where all units are treated and the setting where no units are treated. Because these settings are not simultaneously observable, the platform \revision{must use} data from an experiment in which a fraction of users (treatment group) are exposed to treatment, while the remaining users (control group) are not.

The standard approach to this estimation problem is what we call the \emph{RCT} estimator, 
which estimates the global treatment effect by comparing the total value obtained by treatment units with the total value obtained by control units. Our first main result is that, under a fairly broad condition, the RCT estimator systematically overestimates the global treatment effect. We show this result by taking the system to its fluid limit and showing, first, that the RCT estimator corresponds to a linear approximation of the value function, and second, that this linear approximation is of poor quality in the presence of interference. The condition is that the treatment effect is sign-consistent across types, which means that if the effect is positive for one type, then it is positive for all types. This condition is used to rule out examples where the RCT estimator overestimates the amplitude of both a positive treatment effect on one type and a negative effect on a different type in such a way that the total bias is zero. This would be an example of the standard estimator performing well due to luck rather than good design.

The fluid limit of the value function suggests a new potential approach to the problem: estimate the system arrival rate under both global treatment and global control, solve \revision{two} LPs to compute the value function under both global treatment and global control, and estimate the global treatment effect \revision{as} the difference between the value of these two LPs. We call this \revision{second} approach the \emph{Two-LP} estimator. Our second main result is to prove that this estimator is unbiased in the fluid limit. However, this solution is not perfect: the estimate can be biased in a finite-sized system.

This motivates a third estimator, which we call the \emph{shadow price} (SP) estimator. The SP estimator uses the shadow prices of the matching LP's constraints to calculate the value of the treatment. Intuitively, the shadow price of a demand constraint measures the incremental value of an additional unit of demand, and therefore captures the true value of increasing (or reducing) demand. Our next main result is that the SP estimator is less biased than the RCT estimator under a couple of conditions. The first condition is sign-consistency of the treatment effect. The second condition is that the experiment is symmetric: the control and treatment groups each represent 50\% of the total population. We can \revision{either} relax this symmetry assumption with a modified estimator \revision{based on a} convex combination of the SP and RCT estimators, \revision{or replace it with the practically reasonable assumption that the treatment is not disruptive enough to meaningfully alter the interference pattern in the marketplace.}

Like the RCT estimator, the SP estimator is based on a linear approximation of the matching value function, and therefore it \revision{can be} biased. However, unlike the RCT estimator, the SP estimator uses the ``correct'' linear approximation, in the sense that it is the first-order Taylor series approximation of the value function. Crucially, we prove that the SP estimator is the correct linear approximation even in a finite-sized system, not just in the fluid limit. Therefore, it is not difficult to construct finite-sized instances where the Two-LP estimator is a lot more biased than the SP estimator.

\revision{Though our technique is designed with the treatment effect on the platform's objective function in mind, \revision{we show} it can be extended to measure effects on secondary objectives. In practice, firms running experiments often} track a number of metrics \revision{beyond overall value}. \revision{We can use LP complemtary slackness to extend our shadow price technique to these secondary metrics.}

We validate our proposed method via two simulations. For the first simulation, we use publicly available data about New York City's taxi and ride-hailing ride requests over a given month. We construct a matching value metric based on efficiency defined as ride distance minus pickup distance. We simulate ride requests over a day and construct a matching cycle, then consider an experiment that increases ride request rates. We show that our proposed estimator greatly outperforms the RCT estimator and nearly matches the true global treatment effect. The second simulation captures a supply chain fulfillment experiment. Like the first simulation, it also demonstrates that the SP estimator is a great tool for reducing marketplace interference bias in experiments.

\section{Related Literature}

Interference in experiments is by now a well-known problem and an area of particular focus for recent literature, with empirical studies by \cite{blake2014marketplace} and
\cite{fradkin2019simulation} showing that interference effects are quite significant in real-world experiments respectively involving auctions and recommendation systems. The bulk of the literature on this topic focuses on modifying classical experimental designs to reduce interference. In contrast, our paper considers a standard RCT design and proposes a method to reduce bias without modifying the experiment design.

Early works on this topic focused on experiments over social networks rather than marketplaces. In social network settings, the key concern is that treating a user might affect their peers. The most common method to address this problem involves a bucketing design (also known as a subplot or subnetwork design), where nodes that form a closely-knit group in some network sense are given the same treatment assignment (see, for instance, \cite{ugander2013graph}). More recently, similar methods have been applied to estimate and reduce bias in marketplace experiments \citep{holtz2020reducing}. In the social network setting, \cite{chin2019regression} uses natural fluctuations in the randomized treatment assignment and constructs estimators that differentiate between users that have more neighbors in the control group (and can serve as a proxy for global control) and users that have more neighbors in the treatment group (and can serve as a proxy for global treatment).

Beyond bucketing, another \revision{widely used technique} is the switchback design or time-split, where an entire market is switched back and forth between treatment and control over time. This technique dates back all the way to \cite{cochran1939long} but it has recently gained significant traction. For instance, \cite{bojinov2022design} studies how to optimally design a switchback experiment. \cite{chamandy2016experimentation} and  \cite{sneider2019experiment} discuss the use of switchback experiments at Lyft and Doordash, respectively. \cite{chamandy2016experimentation}, in particular, discusses the tradeoffs between a bucketing-type approach and a switchback design. As discussed in Section \ref{sec:intro}, switchback designs are not feasible for experiments that involve significant changes in the user interface.

More recently, \cite{johari2022experimental} and \cite{bajari2021multiple} have proposed experiment designs where the random assignment to treatment and control is applied to both supply and demand units simultaneously. While \revision{our analysis shares similar aspects} (such as large-scale limits), neither framework applies easily to our problem of interest. Both \cite{johari2022experimental} and \cite{bajari2021multiple} implicitly or explicitly describe marketplaces where supply and demand units interact directly (rather than through a matching algorithm), so supply-side \revision{and} demand-side experiment\revision{s are interchangeable for most treatments}. \cite{bajari2021multiple} impose a local interference assumption which does not hold in our setting, while \cite{johari2022experimental} use a choice model to capture spillovers. Under a similar marketplace model, \cite{li2022interference} analyze the effect of the treatment proportion of supply-side and demand-side experiments on estimator variance and interference bias.

Another recent stream of work focuses on experiment designs that seek to measure the marginal effect of a treatment. \cite{wager2021experimenting} consider experiments in which users are exposed to price shocks large enough to measure an effect, but small enough to preserve marketplace equilibrium. The paper then proposes an SGD-like method to construct a sequence of experiments that converge to the correct estimates despite the presence of interference. In contrast, our paper proposes a method to linearly approximate the interference structure of a single experiment. Our linear approximation is constructed using gradients in a matching marketplace; in the different context of a price-based multi-commodity market, \cite{munro2021treatment} find that indirect treatment effects due to interference are related to direct treatment effects via utility gradients.
\section{The Model}

\subsection{The Platform}

We consider a two-sided matching platform that collects supply and demand requests over time. We assume the platform collects requests over a period of time that we call a matching cycle and then matches them at the end of the cycle with the goal of maximizing some reward function.

\paragraph{Supply and demand types.} Each unit of supply or demand belongs to one of a finite number of types. We index demand types by $i=1,\ldots,n_d$ and supply types by $j=1,\ldots,n_s$. Supply units of type $j$ arrive according to a Poisson process with parameter $\pi_j$, and we denote the vector of supply arrival rates by $\bm{\pi}=\left(\pi_1, \ldots, \pi_{n_s}\right)$. Demand intent units of type $i$ arrive according to a Poisson process with parameter $\tilde{\lambda}_i$. Upon arrival, a demand intent unit observes some information about the state of the marketplace (such as a price or expected pick-up time), and chooses to make a request with probability $p_i$, which might depend on the marketplace state. Request realizations are assumed to be independent of each other and of the Poisson arrival processes. If a demand intent unit does not make a request, it leaves the marketplace forever. We will use the term demand units to refer to units that have submitted requests, as opposed to demand intent units, which might or might not have submitted requests. The arrival process of demand units is therefore a thinning of the demand intent arrival process and is a Poisson process itself with arrival rate $\lambda_i=\tilde{\lambda}_ip_i$ for demand units of type $i$. The demand arrival rates are denoted by $\bm{\lambda}=\left(\lambda_1, \ldots, \lambda_{n_d}\right)$.

We will use $\tau$ to denote a scaling parameter and use the vectors $\mathbf{D}^{\tau} = (D_1^\tau,...,D_{n_d}^\tau)$ and $\bm{S}^{\tau}=(S_1^\tau,...,S_{n_s}^\tau)$ to represent respectively the total number of demand and supply units of each type during a matching cycle. The scaling parameter allows us to consider different regimes where supply and demand arrivals are rare (low $\tau$) or frequent (high $\tau$) and can be thought of as the marketplace density. For each $i$ and $j$, $D_i^\tau$ and $S_j^\tau$ are Poisson-distributed random variables with respective means $\lambda_i \tau$ and $\pi_j \tau$. 

\paragraph{\revision{Special case: bipartite matching.}} Given a set of supply and demand units, the platform's goal is to assign each demand unit to a supply unit. \revision{In order to build intuition, we first present the special case where the platform simply needs to solve a bipartite matching (or transportation) problem. In this case, the platform's optimization problem given demand $\mathbf{D}^{\tau}=\bm{d}$ and supply $\bm{S}^{\tau}=\bm{s}$ can be written as:}

\begin{subequations}
\label{eq:matching-formulation}
\begin{align}
\Phi^{b}(\bm{d},\bm{s})=&\;\max\sum_{i=1}^{n_d}\sum_{j=1}^{n_s}v_{i,j}x_{i,j},\\
&\;\text{s.t. }\sum_{j=1}^{n_s}x_{i,j}\le d_i & \forall i\in [n_d],\label{eq:matching-demand}\\
&\;\phantom{\text{s.t. }}\sum_{i=1}^{n_d}x_{i,j}\le s_j & \forall j\in [n_s],\label{eq:matching-supply}\\
&\;\phantom{\text{s.t. }}x_{i,j}\geq 0&\forall i\in[n_d],j\in[n_s],\label{eq:matching-relaxed}
\end{align}
\end{subequations}

The decision variables $x_{i,j}$ indicate the number of demand units of type $i$ {matched with} supply units of type $j$. Constraints~\eqref{eq:matching-demand} and \eqref{eq:matching-supply} ensure that the matching is feasible from both the demand and supply sides. \revision{Because} the constraint matrix is totally unimodular, we can ignore integrality constraints on the decision variables.

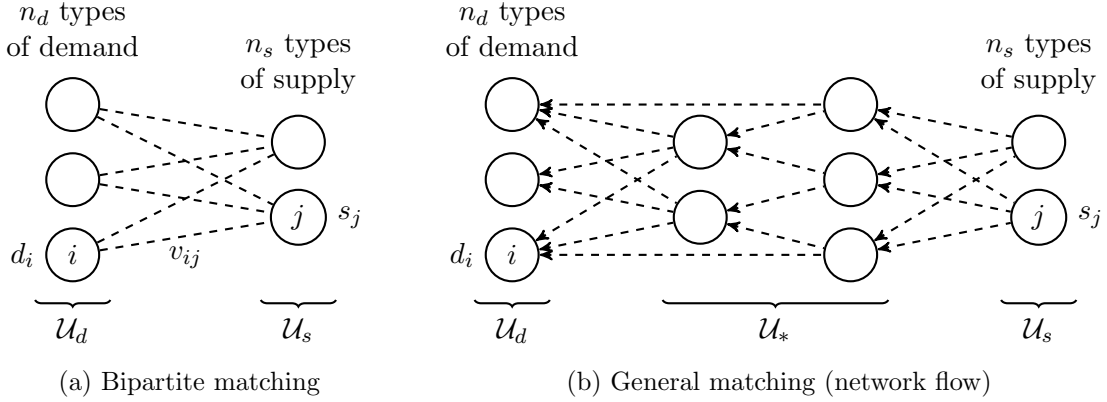
\begin{figure}
    \begin{subfigure}[b]{0.35\columnwidth}
    \centering
    \begin{tikzpicture}[
    thick,
    >=stealth'
  ]
    \coordinate[label={[align=center]above:{$n_d$ types\\of demand}}](DEMAND_LABEL) at (0, 4.5);
    \node [circle,draw,fill=white,minimum size=20,label={}] (D1) at (0,4) {};
    \node [circle,draw,fill=white,minimum size=20] (D2) at (0,3) {};
    \node [circle,draw,fill=white,minimum size=20,label={left:$d_i$}] (Di) at (0,2) {$i$};
    
    \coordinate[label={[align=center]above:{$n_s$ types\\of supply}}](SUPPLY_LABEL) at (3, 4);
    \node [circle,draw,fill=white,minimum size=20,label={}] (S1) at (3,3.5) {};
    \node [circle,draw,fill=white,minimum size=20,label={right:$s_j$}] (Sj) at (3,2.5) {$j$};
    \draw[dashed] (D1) -- (S1);
    \draw[dashed] (D1) -- (Sj);
    \draw[dashed] (D2) -- (S1);
    \draw[dashed] (D2) -- (Sj);
    \draw[dashed] (Di) -- (S1);
    \draw[dashed] (Di) -- (Sj);
    \coordinate[label={[label distance=0mm]below:$v_{ij}$}] (DS_MID) at ($(Di)!0.5!(Sj)$);
    \draw [decorate,decoration = {brace,mirror}] (-0.5,1.4) --  (0.5,1.4);
    \coordinate[label={below:$\mathcal{U}_d$}]() at (0, 1.3);
    \draw [decorate,decoration = {brace,mirror}] (2.5,1.4) --  (3.5,1.4);
    \coordinate[label={below:$\mathcal{U}_s$}]() at (3, 1.3);
  
\end{tikzpicture}
    \caption{Bipartite matching}
    \end{subfigure}%
    \begin{subfigure}[b]{0.6\columnwidth}
    \centering
    \begin{tikzpicture}[
    thick,
    >=stealth'
  ]
    \coordinate[label={[align=center]above:{$n_d$ types\\of demand}}](DEMAND_LABEL) at (0, 4.5);
    \node [circle,draw,fill=white,minimum size=20,label={}] (D1) at (0,4) {};
    \node [circle,draw,fill=white,minimum size=20] (D2) at (0,3) {};
    \node [circle,draw,fill=white,minimum size=20,label={left:$d_i$}] (Di) at (0,2) {$i$};
    
    \coordinate[label={[align=center]above:{$n_s$ types\\of supply}}](SUPPLY_LABEL) at (7, 4);
    \node [circle,draw,fill=white,minimum size=20,label={}] (S1) at (7,3.5) {};
    \node [circle,draw,fill=white,minimum size=20,label={right:$s_j$}] (Sj) at (7,2.5) {$j$};

    \node [circle,draw,fill=white,minimum size=20,label={}] (I1) at (2.5,2.5) {};
    \node [circle,draw,fill=white,minimum size=20,label={}] (I2) at (2.5,3.5) {};
    \node [circle,draw,fill=white,minimum size=20,label={}] (I3) at (4.5,2) {};
    \node [circle,draw,fill=white,minimum size=20,label={}] (I4) at (4.5,3) {};
    \node [circle,draw,fill=white,minimum size=20,label={}] (I5) at (4.5,4) {};
    
    \draw[dashed, <-] (D1) -- (I1);
    \draw[dashed, <-] (D1) -- (I2);
    \draw[dashed, <-] (D2) -- (I1);
    \draw[dashed, <-] (D2) -- (I2);
    \draw[dashed, <-] (Di) -- (I1);
    \draw[dashed, <-] (Di) -- (I2);
    \draw[dashed, <-] (D1) -- (I5);
    \draw[dashed, <-] (Di) -- (I3);
    \draw[dashed, <-] (I1) -- (I3);
    \draw[dashed, <-] (I1) -- (I4);

    \draw[dashed, <-] (I2) -- (I4);
    \draw[dashed, <-] (I2) -- (I5);
    \draw[dashed, <-] (I3) -- (S1);
    \draw[dashed, <-] (I4) -- (S1);
    \draw[dashed, <-] (I5) -- (S1);
    \draw[dashed, <-] (I3) -- (Sj);
    \draw[dashed, <-] (I4) -- (Sj);
    \draw[dashed, <-] (I5) -- (Sj);

    \draw [decorate,decoration = {brace,mirror}] (-0.5,1.4) --  (0.5,1.4);
    \coordinate[label={below:$\mathcal{U}_d$}]() at (0, 1.3);
    \draw [decorate,decoration = {brace,mirror}] (2,1.4) --  (5,1.4);
    \coordinate[label={below:$\mathcal{U}_*$}]() at (3.5, 1.3);
    \draw [decorate,decoration = {brace,mirror}] (6.5,1.4) --  (7.5,1.4);
    \coordinate[label={below:$\mathcal{U}_s$}]() at (7, 1.3);
  
\end{tikzpicture}
    \caption{General matching (network flow)}
    \end{subfigure}
    \caption{Diagram of our matching setting. The general setting simply replaces the direct edges between demand and supply with an arbitrary directed flow network.}
    \label{fig:matching-graph}
\end{figure}

\paragraph{General \revision{case:} matching via network flows.}
    \revision{Though the bipartite matching case is more intuitive, the results in this paper apply in a more general setting, where the platform's optimization problem is any minimum-cost network flow problem. The goal of the platform is still to connect demand units and supply units; however, it must now do so by pushing units of flow through an arbitrary capacitated network. More precisely, we are now given} a directed graph $\mathcal{G}=(\mathcal{U}, \mathcal{A})$.
    \revision{The node set $\mathcal{U}=\mathcal{U}_d\cup\mathcal{U}_s\cup\mathcal{U}_*$ contains three types of nodes: demand, supply, and intermediate.} The set $\mathcal{U}_d$ has one node for each demand type ($|\mathcal{U}_d|=n_d$), and the node $u^d_i$ corresponding to the $i$-th demand type is a sink with demand $D_i^\tau$; similarly, the set $\mathcal{U}_s$ has one node for each supply type ($|\mathcal{U}_s|=n_s$), and the node $u^s_j$ corresponding to the $j$-th supply type is a source with supply $S_j^\tau$. Intermediate nodes are neither sources nor sinks, and their set $\mathcal{U}_*$ can be arbitrarily large. To simplify notation and without loss of generality, we assume demand nodes (sinks) only have incoming edges and supply nodes (sources) only have outgoing edges, while intermediate nodes can have both incoming and outgoing edges. Given a demand vector $\mathbf{D}^{\tau}=\bm{d}$ and a supply vector $\bm{S}^{\tau}=\bm{s}$, we can write the platform's optimization problem as \revision{the} linear program:
    \begin{subequations}
        \label{eq:general-matching-formulation}
        \begin{align}
            \Phi_\tau(\bm{d},\bm{s})=&\;\max\sum_{(u,w)\in\mathcal{A}}v_{u,w}x_{u,w},\\
&\;\text{s.t. }\sum_{w:(w,u)\in\mathcal{A}}x_{w,u} \le d_i  & \forall u=u_i^d\in\mathcal{U}_d,\label{eq:generalized-matching-demand}\\
&\;\phantom{\text{s.t. }}\sum_{w:(u,w)\in\mathcal{A}}x_{u,w} \le s_j  & \forall u=u_j^s\in\mathcal{U}_s,\label{eq:generalized-matching-supply}\\
&\;\phantom{\text{s.t. }}\sum_{w:(u,w)\in\mathcal{A}}x_{u,w} - \sum_{w:(w,u)\in\mathcal{A}}x_{w,u} = 0  & \forall u\in\mathcal{U}_*,\label{eq:generalized-matching-flowbalance}\\
&\;\phantom{\text{s.t. }}0\le x_{u,w} \le \tau k_{u,w} &\forall (u,w)\in\mathcal{A}.\label{eq:generalized-matching-capacity}
        \end{align}
    \end{subequations}
The decision variable $x_{u,w}$ indicates the flow along arc $(u,w)$ in the graph. Constraints~\eqref{eq:generalized-matching-demand} and \eqref{eq:generalized-matching-supply} ensure that we do not exceed demand and supply capacity; constraint~\eqref{eq:generalized-matching-flowbalance} ensures flow balance at each intermediate node. Finally, constraint~\eqref{eq:generalized-matching-capacity} imposes a capacity on each edge – note that this capacity also scales with $\tau$. Each unit of flow along arc $(u,w)$ generates a value $v_{u,w}$ to the platform. The problem is formulated as a value-maximization rather than a cost-minimization problem, but the two are mathematically equivalent. \revision{When the set of intermediate nodes $\mathcal{U}_*$ is empty, we recover the special case of bipartite matching. A diagram of our matching settings is shown in Figure~\ref{fig:matching-graph}.}

\revision{As before}, since the constraint matrix of this problem is totally unimodular, we can relax the integer constraints \revision{and retain an integral} optimal solution. To avoid introducing more notation, we will use $\Phi_\tau(\cdot,\cdot)$ to represent both the LP itself and its optimal solution value, depending on the context. For $\tau=1$ we will drop the subscript altogether and refer to $\Phi(\cdot,\cdot)$.

\subsection{The Experimental Setting}

We consider a treatment that affects the demand arrival rate to the platform. This treatment could involve a change in the user experience (e.g., a smartphone notification to increase engagement) or a financial incentive (e.g., a coupon). A key requirement is that the treatment should not affect the matching function, i.e., the matching algorithm should not distinguish between treated and untreated units. For clarity of exposition, we assume that the treatment does not affect the supply arrival rate. However, due to the symmetry of constraints~\eqref{eq:generalized-matching-demand} and \eqref{eq:generalized-matching-supply}, the framework extends to a treatment that only affects the supply arrival rate, or that jointly affects both the demand and supply arrival rates.

We assume that, if the change were to be rolled out to all demand intent units, the effect of the change would be to modify the request probability of a demand unit of type $i$ from $p_i$ to $p_i+q_i$, with $q_i\in[-p_i,1-p_i]$. From the perspective of the platform, this modification in request probability is equivalent to a change in demand arrival rates for the treatment group. We denote this change in arrival rate as $\bm{\beta}$, such that $\beta_i=\tilde{\lambda}_iq_i$. Evaluating the impact of our intervention means estimating the difference in value between the \emph{global treatment} state, in which demand units arrive at rate $\bm{\lambda}+\bm{\beta}$, and the \emph{global control} state, in which demand units arrive at rate $\bm{\lambda}$. For a particular scaling factor $\tau$, let $\bm{D}^{\tau, \bm{\lambda}}$ denote the realized demand vector with demand arrival rate $\bm{\lambda}$ (for simplicity, we maintain the notation of $\bm{S}^\tau$ instead of $\bm{S}^{\tau,\bm\pi}$ since our demand-side treatment does not affect supply). Our objective is to estimate the global treatment effect $\Delta^{\tau}$, which is defined as follows.

\begin{definition}\label{def:gte}
For a given scaling parameter $\tau$, the global treatment effect $\Delta^{\tau}$ is the difference between the expected platform value under global treatment and under global control, namely
\begin{equation*}
\Delta^{\tau} = \frac{1}{\tau}\mathbb{E}\left[\Phi_\tau(\bm{D}^{\tau, \bm{\lambda} + \bm{\beta}},\bm{S}^{\tau})-\Phi_\tau(\bm{D}^{\tau, \bm{\lambda}},\bm{S}^{\tau})\right].
\end{equation*}
\end{definition}

We include a $1/\tau$ term in front of the expectation in Definition \ref{def:gte} because we are interested in a ``scale-free'' treatment effect, rather than the treatment effect for a system of a particular scale. 

Unfortunately, we cannot directly observe the global treatment effect since we cannot simultaneously observe the marketplace under both global treatment and global control. Instead, we use a \emph{demand-split} experiment design: we assign each demand unit upon arrival to the \emph{treatment} group, with probability $\rho$, or the \emph{control} group with probability $1-\rho$. Therefore, the arrival rate of treated demand units will be $\bm{\lambda}^{\treatment} = \rho(\boldsymbol{\lambda}+\boldsymbol{\beta})$ and the arrival rate of control demand units will be $\bm{\lambda}^{\control} = \left(1-\rho\right)\boldsymbol{\lambda}$, for an experiment demand arrival rate of:
\begin{equation*}
\bm{\lambda}^{\experiment}=\bm{\lambda}^{\treatment}+\bm{\lambda}^{\control}=\rho\left(\boldsymbol{\lambda}+\boldsymbol{\beta}\right)+\left(1-\rho\right)\boldsymbol{\lambda}=\boldsymbol{\lambda}+\rho\boldsymbol{\beta}.
\end{equation*}

Given a scaling factor $\tau$, we observe demand $\bm{D}^{\tau,\experiment}=\bm{D}^{\tau,\control} + \bm{D}^{\tau,\treatment}$ in the experiment, where $D_i^{\tau,\control}\sim \text{Poisson}(\lambda^{\control}_i\tau)$ and $D_i^{\tau,\treatment}\sim \text{Poisson}(\lambda^{\treatment}_i\tau)$, implying that $D^{\tau,\experiment}_i\sim \text{Poisson}(\lambda_i\tau + \rho\beta_i\tau)$. During the experiment, we obtain data by solving
\begin{equation}\label{eq:experiment-lp}\Phi_\tau(\bm D^{\tau,\experiment},\bm{S}^{\tau}).\end{equation} In the following section we describe this experimental data in more detail.

\subsection{The Experimental Data}

When the platform solves the matching problem $\Phi(\bm{D}^{\tau,\experiment},\bm{S}^\tau)$ during an experiment, it obtains data specifying the optimal match. \revision{We first describe this data in the special case of bipartite matching, then consider the general network flow case.}

\paragraph{\revision{Bipartite matching.}} \revision{By solving the matching linear program in the experiment state, the platform obtains two types of data. \emph{Primal data} describes the optimal matching explicitly, and includes the number of control (respectively, treatment) demand units of type $i$ matched with supply units of type $j$, denoted by $X^{\tau,\control}_{i,j}$ (respectively, $X_{i,j}^{\tau,\treatment}$). Because the matching problem is a linear program, the platform also obtains \emph{dual data}, in the form of an optimal solution to the dual of the LP in Eq.~\eqref{eq:matching-formulation}. Given realized demand $\bm{d}$ and supply $\bm{s}$, the dual is another linear program, which can be written as:}

\begin{subequations}
\label{eq:matching-dual}
\begin{align}
    \Phi_\tau^{b}(\bm{d},\bm{s})=&\;\min\sum_{i=1}^{n_d}a_id_i+\sum_{j=1}^{n_s}b_js_j,\\
    &\;\text{s.t. }a_i+b_j\ge v_{i,j} & \forall i\in[n_d],j\in[n_s],\\
    &\;\phantom{\text{s.t. }}a_i\ge 0 & \forall i\in[n_d],\\
    &\;\phantom{\text{s.t. }}b_j\ge 0 & \forall j\in[n_s],
\end{align}
\end{subequations}
where $a_i$ corresponds to the shadow price of an additional unit of demand of type $i$, $b_j$ corresponds to the shadow price of an additional unit of supply of type $j$. Most linear programming algorithms provide an optimal dual solution ``for free'' along with the optimal primal solution (the optimal dual solution certifies the optimality of the primal solution). For an experiment with demand $\bm{D}^{\tau,\experiment}=\bm{D}^{\tau,\control}+\bm{D}^{\tau,\treatment}$ and supply $\bm{S}^{\tau}$, we denote this dual data by $(\bm{A}^{\tau,\experiment}, \bm{B}^{\tau,\experiment})$.

\paragraph{\revision{General matching.}} \revision{In the general case, we similarly obtain primal and dual data. This time, the primal data must describe the number of demand units from the control group (respectively, treatment group) using each edge $(u,w)$ in the flow network, denoted by $X_{u,w}^{\tau,\control}$ (respectively, $X_{u,w}^{\tau,\treatment}$). We denote the total flow along edge $(u,w)$ by $X_{u,w}^{\tau,\experiment}=X_{u,w}^{\tau,\control}+X_{u,w}^{\tau,\treatment}$.}

In addition to the flow along each edge, another, more interpretable primal quantity \revision{in the general case} is the number of demand units of type $i$ matched to supply units of type $j$, and more precisely the number matched via a particular path in the graph, since different paths may yield different values. We denote by $n_{i,j}$ the number of paths from the supply node $u_j^s$ to the demand node $u_i^s$. For the $p$-th such path, we denote by $Y_{i,j,p}^{\tau,\control}$ (respectively $Y_{i,j,p}^{\tau,\treatment}$) the number of demand units from the control group (respectively treatment group) matched to a supply unit of type $j$ via path $p$. The total number of demand units of type $i$ matched to a supply unit of type $j$ along path $p$ is written as $Y_{i,j,p}^{\tau,\experiment}=Y_{i,j,p}^{\tau,\control}+Y_{i,j,p}^{\tau,\treatment}$. Each demand unit of type $i$ matched to a supply unit of type $j$ creates value $\nu_{i,j,p}=\sum_{(u,w)\in\mathcal{A}_{i,j,p}}v_{u,w}$, where $\mathcal{A}_{i,j,p}\subseteq \mathcal{A}$ designates the edges in path $p$ connecting supply node $u_j^s$ to demand node $u_i^d$.

\revision{We also obtain dual data in the general case.} Because demand nodes have no outgoing edges and supply nodes have no incoming edges, we can partition the edge set $\mathcal{A}$ into four types of edges: $\mathcal{A}_{s\to d}$ denotes the set of edges linking supply nodes to demand nodes, $\mathcal{A}_{s\to *}$ edges from supply nodes to intermediate nodes, $\mathcal{A}_{*}$ edges from intermediate nodes to other intermediate nodes, and $\mathcal{A}_{*\to d}$ edges from intermediate nodes to demand nodes. The dual of the general matching problem can then be formulated as follows:
\begin{subequations}
\label{eq:general-matching-dual}
\begin{align}
    \Phi_\tau(\bm{d},\bm{s})=&\;\min\sum_{i=1}^{n_d}a_id_i+\sum_{j=1}^{n_s}b_js_j+\sum_{(u,w)\in\mathcal{A}}\xi_{u,w}k_{u,w}\tau,\\
    &\;\text{s.t. }b_j + a_i + \xi_{u_j^s,u_i^d} \ge v_{u_j^s,u_i^d} & \forall (u_j^s,u_i^d)\in\mathcal{A}_{s\to d},\\
    &\;\phantom{\text{s.t. }}b_j-m_w + \xi_{u_j^s,w}\ge v_{u_j^s,w} & \forall (u_j^s,w)\in\mathcal{A}_{s\to *},\\
    &\;\phantom{\text{s.t. }}m_w+a_i + \xi_{w,u_i^d}\ge v_{w,u_i^d} & \forall (w, u_i^d)\in\mathcal{A}_{*\to d},\\
    &\;\phantom{\text{s.t. }}m_u - m_w + \xi_{u,w} \ge v_{u,w} & \forall (u,w)\in\mathcal{A}_{*},\\
    &\;\phantom{\text{s.t. }}a_i\ge 0 & \forall i\in[n_d],\\
    &\;\phantom{\text{s.t. }}b_j\ge 0 & \forall j\in[n_s],\\
    &\;\phantom{\text{s.t. }}\xi_{u,w}\ge 0 & \forall (u,w)\in\mathcal{A}.
\end{align}
\end{subequations}

\revision{As before}, $a_i$ corresponds to the shadow price of an additional unit of demand of type $i$, $b_j$ corresponds to the shadow price of an additional unit of supply of type $j$. \revision{The additional variable $\xi_{u,w}$} corresponds to the shadow price of increasing the capacity of edge $(u,w)$, \revision{while the} dual variable $m_u$ for each intermediate node $u\in\mathcal{U}_*$ can be thought of as defining a potential function which relates the demand and supply shadow prices via the paths that connect their respective nodes in the graph. \revision{These additional variables ensure correctness of the dual, but are not useful in our methodology. Only the shadow prices of demand and supply matter, which is why the bipartite matching example can be easier to reason about.}
For an experiment with demand $\bm{D}^{\tau,\experiment}=\bm{D}^{\tau,\control}+\bm{D}^{\tau,\treatment}$ and supply $\bm{S}^{\tau}$, we denote \revision{the} dual data by $(\bm{A}^{\tau,\experiment}, \bm{B}^{\tau,\experiment}, \bm{M}^{\tau,\experiment}, \bm{\Xi}^{\tau,\experiment})$.

\paragraph{\revision{Interchangeability of demand and control units.}} A key assumption of our framework is that treatment affects the arrival rate of demand units, but not the value they bring to the platform. In other words, the platform treats demand units in the treatment and control groups \revision{the same way}. As a result, there may be multiple optimal solutions, since a control and treatment unit of the same type can always be exchanged without affecting the objective value. We assume the platform selects between these optimal solutions at random, respecting the following assumption.

\begin{assumption}\label{ass:blind}
The matching LP is blind to the treatment or control status of the demand units. Formally, the primal matching data verify:
\[
\mathbb{E}\left[Y_{i,j,p}^{\tau,\controlemph}|\bm{D}^{\tau,\controlemph}, \bm{D}^{\tau,\treatmentemph}, \bm S^\tau\right]=\frac{D_i^{\tau,\controlemph}}{D_i^{\tau,\experimentemph}}Y_{i,j,p}^{\tau,\experimentemph} \qquad \forall i\in[n_d], j\in[n_s], p\in[n_{i,j}].
\]
\end{assumption}

\revision{Assumption~\ref{ass:blind} is motivated by practice; we would like the experiment to inform us about the state of the system under global control (where no units are treated) and under global treatment (where all units are treated). In both of these states, all units of the same type are interchangeable.}
\section{{The Standard Estimator and Interference Bias}} \label{sec:rct-failure}

In this section, we consider the most commonly used estimator for the treatment effect, and identify the source of its bias under marketplace interference.

\subsection{The RCT Estimator}

Because randomized control trial designs are the workhorse of experimentation studies, there  exists a standard estimator for the global treatment effect. We refer to it as the randomized control trial (RCT) estimator, often called Horvitz-Thompson estimator in the statistics literature.

\begin{definition}
\label{def:rct-estimator}
The RCT estimator for the global treatment effect is given by:
\begin{equation*}
    \hat{\Delta}^{\tau}_{\text{\emph{RCT}}}=\frac{1}{\tau}\left(\frac{1}{\rho}\sum_{(u,w)\in\mathcal{A}}v_{u,w}X_{u,w}^{\tau,\treatmentemph}-\frac{1}{1-\rho}\sum_{(u,w)\in \mathcal{A}}v_{u,w}X_{u,w}^{\tau,\controlemph}\right).
\end{equation*}
\end{definition}

In the equation above, the first term inside the parenthesis estimates the value of global treatment by re-scaling the value obtained from the treatment group by the treatment fraction $\rho$. Similarly, the second term estimates the value of global control by re-scaling the control group value by the control fraction $1-\rho$. Equivalently, the RCT estimator can be written as:
\begin{equation}
    \label{eq:path-based-rct}
    \hat{\Delta}^{\tau}_{\text{RCT}}=\frac{1}{\tau}\left(\frac{1}{\rho}\sum_{i=1}^{n_d}\sum_{j=1}^{n_s}\sum_{p=1}^{n_{i,j}}\nu_{i,j,p}Y_{i,j,p}^{\tau,\treatment}-\frac{1}{1-\rho}\sum_{i=1}^{n_d}\sum_{j=1}^{n_s}\sum_{p=1}^{n_{i,j}}\nu_{i,j,p}Y_{i,j,p}^{\tau,\control}\right),
\end{equation}
where the total value in each group is aggregated by summing over paths rather than edges.

In the special case of bipartite matching, the RCT estimator is given by
\begin{equation*}
    \hat{\Delta}^{\tau}_{\text{RCTb}}=\frac{1}{\tau}\left(\frac{1}{\rho}\sum_{i=1}^{n_d}\sum_{j=1}^{n_s}v_{i,j}X_{i,j}^{\tau,\treatment}-\frac{1}{1-\rho}\sum_{i=1}^{n_d}\sum_{j=1}^{n_s}v_{i,j}X_{i,j}^{\tau,\control}\right),
\end{equation*}
where $X_{i,j}^{\tau,\control}$ (respectively $X_{i,j}^{\tau,\treatment}$) marks the number of demand units of type $i$ from the control group (respectively treatment group) matched with a supply unit of type $j$. The edge-based and path-based definitions coincide \revision{because} there is a single edge between each supply and demand node.
Comparing the {total value} obtained from the treatment group and the {total value obtained from the} control group is the  standard estimation technique in randomized control trials.

\subsection{The Fluid Limit}

Exact analysis of our discrete model can be challenging due to the stochasticity of the Poisson processes generating supply and demand. The challenge is particularly acute when the market is ``thin'' --- i.e., when the low density of supply and demand confers disproportionate influence to small variations in Poisson arrivals. Because these effects vanish when density increases, we \revision{study} a high-density limit of the system, where $\tau$ \revision{grows} arbitrarily large. For simplicity, we only consider integer densities $\tau$. \revision{A} key quantity of interest \revision{is} the fluid limit value for a given demand rate:
\begin{equation*}
\label{eq:definition-infinite-horizon-value}
\lim_{\tau\rightarrow\infty}\mathbb{E}\left[\frac{1}{\tau}\Phi_\tau\left(\mathbf{D}^{\tau,\bm \lambda},\mathbf{S}^{\tau}\right)\right].
\end{equation*}

When density tends to infinity, we can ignore the stochastic fluctuations of the Poisson process and focus only on the expected number of demand and supply arrivals, given by the rates $\bm{\lambda}$ and $\bm{\pi}$. We now prove that the fluid limit value is given by the LP from Eq. \eqref{eq:matching-formulation} where the inputs are the rates $\bm \lambda$ and $\bm \pi$ rather than realizations of demand and supply. 

\begin{theorem}
\label{thm:fluid-limit}For a given demand rate $\bm \lambda$, the fluid limit
of the average total value is given by
\begin{equation}
\lim_{\tau\rightarrow\infty}\mathbb{E}\left[\frac{1}{\tau}\Phi_\tau\left(\mathbf{D}^{\tau, \bm \lambda},\mathbf{S}^{\tau}\right)\right]=\Phi\left(\boldsymbol{\lambda},\boldsymbol{\pi}\right)\label{eq:thm5}.
\end{equation}
Further, let $X^{\tau}_{u,w}$ and $(A^{\tau}_i, B^{\tau}_j, M^{\tau}_w,\Xi^\tau_{u,w})$ \revision{correspond}
to the optimal primal and dual solutions of the matching LP $\Phi_\tau\left(\frac{1}{\tau}\mathbf{D}^{\tau,\bm \lambda},\frac{1}{\tau}\mathbf{S}^{\tau}\right)$. Then {$X^{\tau}_{u,w}$} converges almost surely to 
the optimal primal solution of $\Phi\left(\bm{\lambda},\bm{\pi}\right)$; and {$(A^{\tau}_i, B^{\tau}_j, M^{\tau}_w, \Xi^\tau_{u,w})$} converge almost surely to 
the optimal dual solution of $\Phi\left(\bm{\lambda},\bm{\pi}\right)$. 
\end{theorem}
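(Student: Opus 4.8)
The plan is to reduce the stochastic claim to a deterministic continuity/stability statement about the matching LP, using positive homogeneity of $\Phi$ together with the strong law of large numbers. First I would note that the LP in \eqref{eq:matching-formulation} is positively homogeneous of degree one in its right-hand side: scaling the bounds $(\bm d,\bm s)$ by $\alpha\ge 0$ scales the feasible polytope, hence the optimum, by $\alpha$, so $\Phi(\alpha\bm d,\alpha\bm s)=\alpha\,\Phi(\bm d,\bm s)$. Consequently $\tfrac1\tau\Phi(\bm D^{\tau,\bm\lambda},\bm S^\tau)=\Phi\big(\tfrac1\tau\bm D^{\tau,\bm\lambda},\tfrac1\tau\bm S^\tau\big)$, so everything reduces to the behavior of $\Phi$ near the deterministic point $(\bm\lambda,\bm\pi)$. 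Since $\tau$ ranges over the integers, I would couple $D_i^{\tau,\bm\lambda}$ to a running sum of $\tau$ i.i.d.\ $\mathrm{Poisson}(\lambda_i)$ increments (and likewise $S_j^\tau$), so that the strong law of large numbers gives $\tfrac1\tau\bm D^{\tau,\bm\lambda}\to\bm\lambda$ and $\tfrac1\tau\bm S^\tau\to\bm\pi$ almost surely.

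Second, I would use that the optimal value of a linear program is a continuous (indeed concave, real-valued) function of its right-hand side on $\mathbb{R}_{\ge0}^{n_d+n_s}$, the LP here being always feasible ($x=0$) and always bounded (the total matched demand is at most $\sum_i d_i$). Combining this with the a.s.\ convergence above yields $\Phi\big(\tfrac1\tau\bm D^{\tau,\bm\lambda},\tfrac1\tau\bm S^\tau\big)\to\Phi(\bm\lambda,\bm\pi)$ a.s. To obtain the convergence of expectations in \eqref{eq:thm5} I would add a domination argument: with $M:=\max_{i,j}\max(v_{i,j},0)$ we have $0\le \tfrac1\tau\Phi(\bm D^{\tau,\bm\lambda},\bm S^\tau)\le M\cdot\tfrac1\tau\sum_i D_i^{\tau,\bm\lambda}$, the right-hand side converging both almost surely and in $L^1$ (its expectation is the constant $M\sum_i\lambda_i$) to $M\sum_i\lambda_i$; the generalized dominated convergence theorem then gives $\mathbb{E}\big[\tfrac1\tau\Phi(\bm D^{\tau,\bm\lambda},\bm S^\tau)\big]\to\Phi(\bm\lambda,\bm\pi)$.

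Third, for the primal and dual solutions I would invoke the stability theory of linear programs. Viewed as set-valued maps of the right-hand side $(\bm d,\bm s)$, the optimal-solution correspondences of \eqref{eq:matching-formulation} and \eqref{eq:matching-dual} are compact-valued and upper hemicontinuous near $(\bm\lambda,\bm\pi)$: for the primal because the feasible polytope is uniformly bounded there, and for the dual because any optimal dual solution satisfies $0\le a_i\le M$ and $0\le b_j\le M$ (if some $a_i$ exceeded $\max_j v_{i,j}$ while $a_i>0$ and $d_i>0$, one could strictly decrease the dual objective), so the optimal dual set lies in a fixed compact box. Whenever the limiting LP $\Phi(\bm\lambda,\bm\pi)$ has unique optimal primal and dual solutions (which, for the arrival rates to which we apply this theorem, is exactly Assumption \ref{ass:unique}), the limiting correspondence is single-valued, so upper hemicontinuity forces \emph{every} measurable selection of optimal solutions at $\tfrac1\tau(\bm D^{\tau,\bm\lambda},\bm S^\tau)$ to converge to that unique solution; combined with $\tfrac1\tau(\bm D^{\tau,\bm\lambda},\bm S^\tau)\to(\bm\lambda,\bm\pi)$ a.s.\ this gives $X^\tau_{i,j}\to$ the optimal primal and $A^\tau_i,B^\tau_j\to$ the optimal dual solution, almost surely.

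The step I expect to be the main obstacle is this last one. Convergence of optimal \emph{values} is routine, but optimal \emph{solutions} of an LP are not continuous in the data at a degenerate limit, and for finite $\tau$ the variables $X^\tau,A^\tau,B^\tau$ are arbitrary selections from potentially non-unique optimal sets. The crux is therefore (i) to establish the uniform boundedness that makes the solution correspondences compact-valued and upper hemicontinuous, and (ii) to use single-valuedness of the limit — supplied by the uniqueness hypothesis — to conclude that \emph{every} such selection, not just a conveniently chosen one, converges to the limit.
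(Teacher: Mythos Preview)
Your proposal is correct and shares the paper's high-level outline (positive homogeneity of $\Phi$, SLLN for the scaled inputs, continuity of the LP value, a domination argument for the expectation, and LP stability for the primal/dual limits), but the execution differs in two places worth noting. For domination, the paper builds a \emph{single} integrable majorant $Z\cdot\max_{i,j}v_{i,j}$ with $Z:=\sup_{\tau\ge1}\tfrac1\tau\sum_i D_i^{\tau}$, proving $\mathbb{E}[Z]<\infty$ via a Poisson tail bound (Lemma~\ref{lem:max-scaled-pois}), and then applies the ordinary DCT; your generalized-DCT route with the dominating \emph{sequence} $M\cdot\tfrac1\tau\sum_i D_i^{\tau}$ (which has constant expectation $M\sum_i\lambda_i$) is cleaner and bypasses that lemma entirely. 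For solution convergence, the paper exploits a structural fact you do not use: the dual feasible polyhedron in \eqref{eq:matching-dual} is independent of $(\bm d,\bm s)$, so the optimal dual is always one of finitely many fixed extreme points; a short contradiction argument then shows the dual is \emph{eventually equal} to the unique fluid-limit dual $(\bm a^*,\bm b^*)$, after which the primal is recovered from the complementary-slackness equations as a continuous function of the right-hand side. Your upper-hemicontinuity argument reaches the same conclusion more abstractly; the paper's approach is more elementary and yields the slightly stronger ``eventually constant dual'' statement rather than mere convergence.
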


Theorem~\ref{thm:fluid-limit} allows us to simplify our analysis \revision{by studying the deterministic linear program $\Phi(\bm \lambda, \bm \pi)$ in lieu of a complex stochastic system. The simplified system retains the key property under study, namely} the effect of  marketplace contention for scarce supply. The first implication of Theorem~\ref{thm:fluid-limit} is that it simplifies the expression for the global treatment effect from Definition~\ref{def:gte}.

\begin{corollary}
\label{cor:gte-fluid-limit}
The fluid limit of the global treatment effect satisfies:
\begin{equation}
\Delta {= \lim_{\tau\to \infty}\Delta^\tau}=\Phi\left(\boldsymbol{\lambda}+\boldsymbol{\beta},\boldsymbol{\pi}\right)-\Phi\left(\boldsymbol{\lambda},\boldsymbol{\pi}\right).
\end{equation}
\end{corollary}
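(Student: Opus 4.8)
The goal is to pass from the stochastic definition $\Delta = \lim_{\tau\to\infty}\Delta^\tau = \lim_{\tau\to\infty}\frac{1}{\tau}\mathbb{E}\left[\Phi(\bm{D}^{\tau,\bm\lambda+\bm\beta},\bm{S}^\tau)-\Phi(\bm{D}^{\tau,\bm\lambda},\bm{S}^\tau)\right]$ to the deterministic expression $\Phi(\bm\lambda+\bm\beta,\bm\pi)-\Phi(\bm\lambda,\bm\pi)$. The natural approach is to split the expectation of the difference into a difference of expectations, apply Theorem~\ref{thm:fluid-limit} separately to each term, and then recombine. Concretely, I would first write
\begin{equation*}
\Delta^\tau = \frac{1}{\tau}\mathbb{E}\left[\Phi(\bm{D}^{\tau,\bm\lambda+\bm\beta},\bm{S}^\tau)\right] - \frac{1}{\tau}\mathbb{E}\left[\Phi(\bm{D}^{\tau,\bm\lambda},\bm{S}^\tau)\right],
\end{equation*}
which is valid for each finite $\tau$ by linearity of expectation (both terms are finite since $\Phi$ is bounded above by $\sum_{i,j}|v_{i,j}|\min(d_i,s_j)$ and the Poisson variables have finite mean).

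Next I would apply Theorem~\ref{thm:fluid-limit} to each term. For the control term, the theorem directly gives $\lim_{\tau\to\infty}\frac{1}{\tau}\mathbb{E}[\Phi(\bm{D}^{\tau,\bm\lambda},\bm{S}^\tau)] = \Phi(\bm\lambda,\bm\pi)$. For the treatment term, I would invoke the theorem with the demand rate $\bm\lambda+\bm\beta$ in place of $\bm\lambda$ — note that Theorem~\ref{thm:fluid-limit} is stated "for a given demand rate $\bm\lambda$," so it applies verbatim to any nonnegative rate vector, in particular to $\bm\lambda+\bm\beta$ (which is nonnegative since $q_i\in[-p_i,1-p_i]$ ensures $p_i+q_i\ge 0$, hence $\lambda_i+\beta_i=\tilde\lambda_i(p_i+q_i)\ge 0$). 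This yields $\lim_{\tau\to\infty}\frac{1}{\tau}\mathbb{E}[\Phi(\bm{D}^{\tau,\bm\lambda+\bm\beta},\bm{S}^\tau)] = \Phi(\bm\lambda+\bm\beta,\bm\pi)$. Subtracting the two limits, and using the fact that the limit of a difference is the difference of limits when both exist, gives the claimed identity.

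The steps here are all routine, so I do not anticipate a genuine obstacle; the only points requiring a sentence of care are (i) checking that the two expectations are individually finite so that the split is legitimate (boundedness of $\Phi$ together with integrability of the Poisson arrivals), (ii) confirming that Theorem~\ref{thm:fluid-limit} is being applied within its stated hypotheses for the shifted rate $\bm\lambda+\bm\beta$, and (iii) noting that $\Delta^\tau$ here uses $\bm{S}^\tau$, not $\frac{1}{\tau}\bm{S}^\tau$, but since $\Phi$ is positively homogeneous of degree one in $(\bm d,\bm s)$ we have $\frac{1}{\tau}\Phi(\bm{D}^\tau,\bm{S}^\tau) = \Phi(\frac{1}{\tau}\bm{D}^\tau,\frac{1}{\tau}\bm{S}^\tau)$, so the scaling matches the form used in Theorem~\ref{thm:fluid-limit}. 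With these bookkeeping points in place the corollary follows immediately.
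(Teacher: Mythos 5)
Your proposal is correct and matches the paper's reasoning: the paper gives no separate proof, treating the corollary as an immediate consequence of Theorem~\ref{thm:fluid-limit} applied at both demand rates $\bm\lambda$ and $\bm\lambda+\bm\beta$ and subtracting, which is exactly your argument (your added bookkeeping on finiteness, nonnegativity of $\bm\lambda+\bm\beta$, and the $1/\tau$ scaling is fine but not needed beyond what the theorem already provides).
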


In the fluid limit, the global treatment effect is simply the difference between the optimal values of two linear programs. In contrast, running a demand-split experiment \revision{means considering} the ``intermediate'' linear program $\Phi(\bm \lambda^{\experiment}, \bm \pi)=\Phi(\bm\lambda + \rho\bm\beta, \bm\pi)$. In effect, the experiment state interpolates between the global control and global treatment states. We formalize this notion of interpolation by introducing a useful tool, which we call the partial-treatment value function.

\begin{definition}
    Given a matching experiment with supply arrival rate $\bm{\pi}$, demand arrival rate of $\bm{\lambda}$ under global control, and demand arrival rate $\bm{\lambda}+\bm{\beta}$ under global treatment, for $\eta\in[0,1]$, the partial-treatment value function is given by $\Psi(\eta)=\Phi(\bm{\lambda}+\eta\bm{\beta},\bm \pi)$.
\end{definition}

Using the partial-treatment value function, we can equivalently write the global treatment effect as $\Delta=\Psi(1)-\Psi(0)$. Under our demand-split experiment with treatment fraction $\rho$, we observe the total value $\Psi(\rho)$. 
Using classical results from linear programming, we can establish that $\Psi(\cdot)$ has a particular functional form, which will be useful in our analysis.

\begin{proposition}   \label{prop:concave-integrable}
Over the interval $[0,1]$, the partial-treatment value function $\Psi(\cdot)$ is a concave piece-wise linear function with finitely many pieces.
\end{proposition}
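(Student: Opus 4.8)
The plan is to express $\Psi(\eta) = \Phi(\bm\lambda + \eta\bm\beta, \bm\pi)$ as the optimal value of a parametric linear program in which only the right-hand side depends on $\eta$, and then invoke standard parametric LP sensitivity results. Concretely, for each $\eta \in [0,1]$ the primal \eqref{eq:matching-formulation} has the fixed objective $\sum_{i,j} v_{i,j} x_{i,j}$ and constraint matrix, while the demand constraints \eqref{eq:matching-demand} have right-hand side $\lambda_i + \eta\beta_i$ and the supply constraints \eqref{eq:matching-supply} have fixed right-hand side $\pi_j$. The feasible region is always nonempty (take $x \equiv 0$) and the objective is bounded above (the demand and supply constraints bound each $x_{i,j}$), so $\Psi(\eta)$ is finite for every $\eta \in [0,1]$.

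For \textbf{concavity}, I would use the dual \eqref{eq:matching-dual}: $\Psi(\eta) = \min_{(\bm a,\bm b) \in \mathcal{P}} \big[ \sum_i a_i(\lambda_i + \eta\beta_i) + \sum_j b_j \pi_j \big]$, where $\mathcal{P} = \{(\bm a, \bm b) \ge 0 : a_i + b_j \ge v_{i,j}\}$ is a fixed polyhedron independent of $\eta$. For each fixed $(\bm a, \bm b)$ the bracketed expression is affine in $\eta$, and a pointwise minimum of affine functions is concave; hence $\Psi$ is concave on $[0,1]$. (Alternatively, concavity of the LP value in its right-hand side is itself a classical fact.)

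For the \textbf{piecewise-linear with finitely many pieces} claim, I would argue from the primal side that $\mathcal{D}$, the feasible set of $(\bm a, \bm b)$, has finitely many vertices $\{(\bm a^{(1)}, \bm b^{(1)}), \ldots, (\bm a^{(K)}, \bm b^{(K)})\}$, and since the dual always attains its optimum at a vertex, $\Psi(\eta) = \min_{k \in [K]} \big[ \sum_i a_i^{(k)}(\lambda_i + \eta\beta_i) + \sum_j b_j^{(k)} \pi_j \big]$, a minimum of $K$ affine functions of $\eta$ — i.e.\ concave piecewise linear with at most $K$ pieces. One technical point to handle: the dual polyhedron $\mathcal{D}$ could in principle be unbounded in directions that matter, so I would either note that the relevant minimization always selects a vertex because the objective direction $(\bm\lambda + \eta\bm\beta, \bm\pi)$ is entrywise nonnegative and $\mathcal{D} \subseteq \mathbb{R}^{n_d+n_s}_{\ge 0}$ (so any recession direction has nonnegative inner product with the objective and can be discarded), or equivalently restrict attention to the bounded face where the optimum lives. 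I expect this boundedness/attainment bookkeeping to be the only real obstacle; everything else is a direct application of the min-of-affine-functions characterization.

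I would close by noting that restricting the domain to the compact interval $[0,1]$ makes all the above finite and well-defined, which is all the proposition asserts.
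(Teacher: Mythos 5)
Your proposal is correct: the paper itself disposes of this proposition by citing Theorem 5.1 of \cite{bertsimas1997introduction} (concavity and piecewise linearity of the LP value in its right-hand side), and your dual-based argument --- writing $\Psi(\eta)$ as the minimum over the fixed dual polyhedron of objectives affine in $\eta$, attained at one of its finitely many vertices since the polyhedron is pointed and strong duality guarantees a finite optimum --- is precisely the standard proof behind that citation. So you have simply written out, correctly and with the attainment-at-a-vertex bookkeeping handled, the same approach the paper invokes by reference.
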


To guarantee that all estimators are well-defined in the fluid limit, we will also make the following assumption throughout the paper.

\begin{assumption}
\label{ass:unique} 
The optimal primal and dual solutions of $\Phi(\bm \lambda + \rho\bm \beta,\bm \pi)$ as defined in Eqs.~\eqref{eq:general-matching-formulation} and \eqref{eq:general-matching-dual} are both unique.
\end{assumption}

\subsection{Performance of the RCT Estimator}\label{ssec:rct-performance}

By allowing us to restrict our analysis to a deterministic setting, the fluid limit allows us to gain insight into the performance of the {standard} RCT estimator.

\begin{proposition}
\label{prop:infinite-horizon-limit-rct}
In the fluid limit, the RCT estimator satisfies:
\begin{equation}
    \label{eq:infinite-horizon-limit-rct}
    {\hat{\Delta}_{\text{\emph{RCT}}}
=\lim_{\tau\to\infty}\hat{\Delta}^\tau_{\text{\emph{RCT}}}}
    =\bar{\bm{v}}^*\cdot \bm\beta,
\end{equation}
where
\begin{equation}
\bar{v}_i^*={\sum_{j=1}^{n_s}\sum_{p=1}^{n_{i,j}}\frac{y^*_{i,j,p}}{\lambda^{\experimentemph}_i}\nu_{i,j,p},}
\end{equation}
and $y^*_{i,j,p}$ is the number of demand units of type $i$ matched to supply units of type $j$ along path $p$ in the optimal solution of $\Phi(\bm\lambda^{\experimentemph}, \bm\pi)$.
\end{proposition}

Proposition~\ref{prop:infinite-horizon-limit-rct} provides a simple explanation of the RCT estimator's behavior in the fluid limit: it first uses the experiment data to compute the average value obtained by each demand type $i$, called $\bar v_i^*$. It then estimates the value of treatment on group $i$ by multiplying $\bar v_i^*$ with $\beta_i$, which is the demand effect of the treatment on group $i$.

An easy way to understand what the RCT estimator does is to observe that it corresponds to building a specific linear approximation of the {matching} value function. In particular, it {notices that at the experiment point,} the value function verifies $\Phi(\bm {\lambda^{\experiment}},\pi) = \bm{\bar v^*} \cdot \bm {\lambda^{\experiment}}${, and assumes this linear relationship continues to apply when the demand is $\bm\lambda$ (global control) and $\bm\lambda + \bm\beta$ (global treatment). This approximation would be correct in the absence of marketplace interference.} In {its} presence{, however}, the value function is concave rather than linear  (Proposition \ref{prop:concave-integrable}). {We can show that ignoring \revision{this concavity} introduces a systematic bias in the RCT estimator. Before we do so, we introduce our third and final assumption on the treatment effect.

\begin{assumption}
\label{ass:sign-consistent}
The treatment effect is sign-consistent across types, i.e., either $\beta_i\ge 0$ for all $i$ or $\beta_i\le 0$ for all $i$.
\end{assumption}
}

Without this assumption, it is possible to construct an example with $\beta_1 > 0$ and $\beta_2 < 0$ where, due to interference, the RCT estimator simultaneously overestimates (in amplitude) the positive effect from the increased demand of type 1 and the negative effect from the decreased demand of type 2, in such a way that the respective biases (which have opposite signs) cancel. The resulting estimator can have zero bias, but this is due to a lucky coincidence rather than any inherent quality of the estimator. {All the results that follow rely on Assumptions \ref{ass:blind}-\ref{ass:sign-consistent} unless otherwise stated.}

\begin{theorem}
\label{thm:rct-overestimate}
The RCT estimator always overestimates the global treatment effect (in amplitude), i.e., $\abs{\hat{\Delta}_{\text{\emph{RCT}}}}\ge \abs{\Delta}$.  In particular, if $\bm\beta\ge0$, the RCT estimator simultaneously overestimates the value of global treatment and underestimates the value of global control (and vice versa if $\bm\beta\le0$).
\end{theorem}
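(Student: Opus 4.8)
The plan is to work entirely in the fluid limit and use the partial-treatment value function $\Psi(\eta) = \Phi(\bm\lambda + \eta\bm\beta, \bm\pi)$ as the central object, exploiting its concavity (Proposition~\ref{prop:concave-integrable}). First I would establish that the RCT estimator is the slope of a particular chord or tangent-like line associated with $\Psi$. From Proposition~\ref{prop:infinite-horizon-limit-rct}, $\hat\Delta_{\text{RCT}} = \bar{\bm v}^* \cdot \bm\beta$ where $\bar v_i^* = \sum_j (x^*_{i,j}/\lambda^{\experiment}_i) v_{i,j}$ and $x^*$ is optimal for $\Phi(\bm\lambda^{\experiment}, \bm\pi) = \Psi(\rho)$. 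The key observation is that $\bar{\bm v}^* \cdot \bm\lambda^{\experiment} = \sum_{i,j} v_{i,j} x^*_{i,j} = \Psi(\rho)$, i.e., the RCT linear approximation $\bm p \mapsto \bar{\bm v}^* \cdot \bm p$ passes through the point $(\bm\lambda^{\experiment}, \Psi(\rho))$ and has zero value at the origin. Evaluating this linear approximation along the segment $\bm\lambda + \eta\bm\beta$ gives an affine function of $\eta$, call it $L(\eta) = \bar{\bm v}^* \cdot (\bm\lambda + \eta\bm\beta)$, with $L(\rho) = \Psi(\rho)$ and slope $L'(\eta) = \bar{\bm v}^*\cdot\bm\beta = \hat\Delta_{\text{RCT}}$. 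So $\hat\Delta_{\text{RCT}}$ is the slope of the line through $(\rho, \Psi(\rho))$ corresponding to the no-interference approximation, and $\Delta = \Psi(1) - \Psi(0)$ is the slope of the chord of $\Psi$ over $[0,1]$.

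Next I would show $L(0) \le \Psi(0)$ and $L(1) \ge \Psi(1)$, which (combined with $L(\rho) = \Psi(\rho)$) immediately yields the "simultaneously overestimates treatment / underestimates control" claim and, comparing slopes of $L$ and the chord of $\Psi$, gives $|\hat\Delta_{\text{RCT}}| \ge |\Delta|$. The inequality $L(0) \le \Psi(0)$ says $\bar{\bm v}^*\cdot\bm\lambda \le \Phi(\bm\lambda,\bm\pi)$; this should follow because $\bar{\bm v}^*$ is (up to the demand-constraint shadow prices) a feasible-ish dual-type object — more carefully, I would argue that $\bar v_i^* = a_i^*$ whenever $\lambda_i^{\experiment} > 0$ and the $i$-th demand constraint is tight at $x^*$ (by complementary slackness and the definition of $\bar v_i^*$ as the average matched value), and that $\bar v_i^* \le a_i^*$ in general, where $\bm a^*$ is the optimal dual for $\Psi(\rho)$; then weak duality / the dual feasibility of $\bm a^*$ at other demand vectors gives the bound. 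Symmetrically, $L(1) \ge \Psi(1)$ should follow from the fact that scaling the optimal primal solution $x^*$ (which uses demand at rate $\bm\lambda^{\experiment}$) up to demand rate $\bm\lambda + \bm\beta$ — feasible when $\bm\beta \ge 0$ and $\rho \le 1$, since $\bm\lambda+\bm\beta \ge \bm\lambda^{\experiment}$ componentwise — produces a primal-feasible point for $\Phi(\bm\lambda+\bm\beta,\bm\pi)$ only after also respecting supply, so instead I would directly use concavity: $\Psi$ concave with $L$ tangent-from-above at $\rho$ is exactly what needs justification, and the cleanest route is to show $L$ is a supporting line of $\Psi$ at $\eta=\rho$, i.e., $L(\eta)\ge\Psi(\eta)$ for all $\eta\in[0,1]$, using that $\bar{\bm v}^*$ extends to a dual-feasible vector for the supply-augmented LP.

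The cleanest self-contained argument, and the one I would actually write, is: (i) $L(\rho)=\Psi(\rho)$ as above; (ii) $L$ is an affine function lying weakly above $\Psi$ on $[0,1]$, because $L(\eta) = \bar{\bm v}^*\cdot(\bm\lambda+\eta\bm\beta)$ and one can check $\bar{\bm v}^*$ together with the optimal supply duals $\bm b^*$ of $\Psi(\rho)$ forms a feasible solution to the dual of $\Phi(\bm\lambda+\eta\bm\beta,\bm\pi)$ for every $\eta$ (dual feasibility constraints $a_i+b_j\ge v_{i,j}$ and nonnegativity are unaffected by changing the demand rate), hence by weak duality $\bar{\bm v}^*\cdot(\bm\lambda+\eta\bm\beta) + \bm b^*\cdot\bm\pi \ge \Psi(\eta)$; combined with complementary slackness at $\eta=\rho$ (where equality holds and $\bm b^*\cdot\bm\pi$ is pinned down) this gives $L(\eta)\ge\Psi(\eta)$ with equality at $\rho$. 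The subtlety, and the main obstacle, is precisely verifying that $\bar{\bm v}^*$ is dual-feasible — the $a_i^*$ are, but $\bar v_i^*$ is the \emph{averaged} matched value, which equals $a_i^*$ only on the support and can be strictly smaller otherwise; I need $\bar v_i^* \le a_i^*$ so that $(\bar{\bm v}^*, \bm b^*)$ remains feasible ($\bar v_i^* + b_j^* \le a_i^* + b_j^*$ is the wrong direction!). Resolving this requires care: on types $i$ with $\lambda_i^{\experiment}>0$ and positive matched flow, $\bar v_i^*$ equals a convex combination of $v_{i,j}$ over $j$ in the support, each of which satisfies $v_{i,j} \le a_i^* + b_j^*$ with equality on the support by complementary slackness, so $\bar v_i^* = a_i^* + \sum_j (x_{i,j}^*/\lambda_i^{\experiment}) b_j^*$ — so $L(0)=\bar{\bm v}^*\cdot\bm\lambda$ decomposes into $\bm a^*\cdot\bm\lambda + (\text{supply terms})$, and I would bound each piece against $\Phi(\bm\lambda,\bm\pi)$ directly via weak duality applied to $(\bm a^*,\bm b^*)$ at demand rate $\bm\lambda$, getting $\Phi(\bm\lambda,\bm\pi)\le \bm a^*\cdot\bm\lambda + \bm b^*\cdot\bm\pi$, and then showing the leftover supply slack terms reconcile. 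Once $L(0)\le\Psi(0)$ and $L(1)\ge\Psi(1)$ are in hand with $L(\rho)=\Psi(\rho)$, the amplitude comparison $|\hat\Delta_{\text{RCT}}|=|L(1)-L(0)|\ge|\Psi(1)-\Psi(0)|=|\Delta|$ follows since $L$ is affine and agrees with $\Psi$ at the interior point $\rho$ while straddling it at the endpoints; the sign-consistency hypothesis is what guarantees $\Delta$ and the endpoint gaps $\Psi(0)-L(0)$, $L(1)-\Psi(1)$ all have compatible signs so no cancellation occurs, handled by the symmetric case $\bm\beta\le 0$ via an identical argument with inequalities reversed.
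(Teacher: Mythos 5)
Your overall frame --- the affine function $L(\eta)=\bar{\bm v}^*\cdot(\bm\lambda+\eta\bm\beta)$ satisfies $L(\rho)=\Psi(\rho)$, and the theorem follows from $L(0)\le\Psi(0)$ together with $L(1)\ge\Psi(1)$ --- is the same as the paper's, but your route to the endpoint inequalities has a genuine gap. The inequality $L(0)\le\Psi(0)$ is a \emph{lower} bound on the LP value $\Phi(\bm\lambda,\bm\pi)$, and no weak-duality argument can deliver it: plugging any dual-feasible pair into the dual objective only bounds $\Phi(\bm\lambda,\bm\pi)$ from \emph{above}. (Your auxiliary claims are also off: complementary slackness gives $\bar v_i^*=a_i^*+\sum_j (x^*_{i,j}/\lambda_i^{\experiment})b_j^*\ge a_i^*$, not $\bar v_i^*=a_i^*$ on tight rows nor $\bar v_i^*\le a_i^*$ in general --- the inequality $a_i^*\le\bar v_i^*$ is exactly what the paper uses in its variance proof.) The missing ingredient is a primal construction: the paper scales the experiment-state optimum type by type, $y_{i,j}=\frac{\lambda_i}{\lambda_i+\rho\beta_i}\,x^*_{i,j}$, which is feasible for $\Phi(\bm\lambda,\bm\pi)$ when $\bm\beta\ge 0$ (each scaling factor lies in $[0,1]$, so demand constraints are met and supply constraints only loosen) and has objective exactly $\bar{\bm v}^*\cdot\bm\lambda$; this gives $L(0)\le\Psi(0)$ immediately.

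Your fallback --- showing $L(\eta)\ge\Psi(\eta)$ on all of $[0,1]$, i.e.\ that $L$ supports $\Psi$ at $\rho$ --- is false and in fact inconsistent with what you are trying to prove, since $L(0)\le\Psi(0)$ typically holds strictly: with $\bm\beta\ge0$ the RCT line \emph{crosses} $\Psi$ at $\rho$ (below on $[0,\rho]$, above on $[\rho,1]$); the supporting line at $\rho$ is the SP line with slope $\bm a^{\rho}\cdot\bm\beta$, not the RCT line, and weak duality with $(\bar{\bm v}^*,\bm b^*)$ only yields $\Psi(\eta)\le L(\eta)+\bm b^*\cdot\bm\pi$, with an offset that does not vanish. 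Once $L(0)\le\Psi(0)$ is in hand, the treatment-side inequality needs no dual argument either: concavity gives $\Psi(\rho)\ge\rho\Psi(1)+(1-\rho)\Psi(0)$, and since $L$ is affine with $L(\rho)=\Psi(\rho)$ and $L(0)\le\Psi(0)$, this rearranges to $L(1)\ge\Psi(1)$ --- which is precisely the paper's second step. The amplitude comparison and the symmetric case $\bm\beta\le0$ then go through as you indicate.
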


We illustrate Theorem~\ref{thm:rct-overestimate} through a running example, visualized in Figure~\ref{fig:example-setup}. Consider a {bipartite matching} setting \revision{with} one demand type ($n_d=1$), with arrival rate $\lambda_1=1.5$ under control, and $\lambda_1+\beta=5.5$ under treatment. Additionally, we have six supply types ($n_s=6$), with $\pi_j=1$ and \revision{$v_{1,j}=2^{(2-j)}$} for each supply type $j$. Our example exhibits contention since the arrival rate of the highest-value supply $\pi_1=1$ is less than the arrival rate of demand $\lambda_1=1.5$ under control (and much less than under treatment). As a result, the function $\lambda \to \Phi(\lambda, \bm{\pi})$ is strictly concave. We plot this function in Figure~\ref{fig:example-function}, noting that the true global treatment effect $\Delta$ \revision{equals} the difference in the function's value at $\lambda_1+\beta$ (global treatment) and its value at $\lambda_1$ (global control).

\begin{figure}[ht!]
\centering
\begin{subfigure}[b]{0.36\columnwidth}
\centering
\begin{tikzpicture}[
    thick,
    >=stealth'
  ]
    \coordinate[label={[align=right]left:{\footnotesize Control:\\\footnotesize$\lambda_1=1.5$}}]() at (0.6, 3.5);
    \coordinate[label={[align=right]left:{\footnotesize Treatment:\\\footnotesize$\lambda_1+\beta=5.5$}}]() at (0.6, 1.5);
    \node [circle,draw,fill=white,minimum size=20,label={}] (D1) at (0,2.5) {};
    \draw [decorate,decoration = {brace,mirror}] (3.5,-0.3) --  (3.5,5.3);
    \coordinate[label={[align=left]right:{\footnotesize 6 supply\\\footnotesize types,\\\footnotesize$\bm\pi=\bm1$}}](SUPPLY_LABEL) at (3.6, 2.5);
    \node [circle,draw,fill=white,minimum size=20] (S1) at (3,5) {};
    \node [circle,draw,fill=white,minimum size=20] (S2) at (3,4) {};
    \node [circle,draw,fill=white,minimum size=20] (S3) at (3,3) {};
    \node [circle,draw,fill=white,minimum size=20] (S4) at (3,2) {};
    \node [circle,draw,fill=white,minimum size=20] (S5) at (3,1) {};
    \node [circle,draw,fill=white,minimum size=20] (S6) at (3,0) {};
    \draw[dashed] (D1) -- (S1);
    \draw[dashed] (D1) -- (S2);
    \draw[dashed] (D1) -- (S3);
    \draw[dashed] (D1) -- (S4);
    \draw[dashed] (D1) -- (S5);
    \draw[dashed] (D1) -- (S6);
    
    \coordinate[label={[label distance=-2mm]above left:{\scriptsize $v_{1,1}=2$}}]() at ($(D1)!0.73!(S1)$);
    \coordinate[label={[label distance=0mm]above:{\scriptsize 1}}]() at ($(D1)!0.7!(S2)$);
    \coordinate[label={[label distance=-1mm]above:{\scriptsize $\frac{1}{2}$}}]() at ($(D1)!0.7!(S3)$);
    \coordinate[label={[label distance=-1mm]above:{\scriptsize $\frac{1}{4}$}}]() at ($(D1)!0.7!(S4)$);
    \coordinate[label={[label distance=-0.75mm]above:{\scriptsize $\frac{1}{8}$}}]() at ($(D1)!0.7!(S5)$);
    \coordinate[label={[label distance=-0.5mm]above:{\scriptsize $\frac{1}{16}$}}]() at ($(D1)!0.7!(S6)$);
  
\end{tikzpicture}
\caption{Matching graph}
\end{subfigure}
\begin{subfigure}[b]{0.63\columnwidth}
\centering
\begin{tikzpicture}[
    thick,
    >=stealth'
  ]
  \coordinate (O) at (0,0);
  \draw[->] (-0.3,0) -- (7.1,0) coordinate[label = {below:$\lambda$}] (xmax);
  \draw[->] (0,-0.3) -- (0,5) coordinate[label = {left:}] (ymax);
  \draw[-] (0,0) -- (1,2);
  \draw[-] (1,2) coordinate (A) -- (2,3) coordinate (B);
  \coordinate (PSI0) at ($(A)!0.5!(B)$);
  \draw[-] (2,3) -- (3,3.5);
  \draw[-] (3,3.5) coordinate (I) -- (4,3.75) coordinate (J);
  \coordinate (EXP) at ($(I)!0.5!(J)$);
  \draw[-] (4,3.75) -- (5,3.875);
  \draw[-] (5,3.875) coordinate (E) -- (6,3.9375) coordinate (F);
  \draw[-] (6,3.9375) -- (7,3.9375) coordinate[label={225:$\Phi(\lambda, \bm{\pi})$}];
  \draw[darkgray,densely dotted] (1.5,0) coordinate[label = {[label distance=0mm,align=center]below:{$\lambda_1=1.5$\\\footnotesize{(global control)}}}](CONTROL) -- (PSI0);
  \coordinate (PSI1) at (5.5, 3.90625);
  \draw[darkgray,densely dotted] (5.5,0) coordinate[label = {[label distance=0mm,align=center]below:{$\lambda_1+\beta=5.5$\\\footnotesize{(global treatment)}}}] (G) -- (PSI1);
  
  \draw[Blue,densely dotted] (PSI0) -- (PSI0 -| 0,0) coordinate[label={[label distance=0mm, align=right]left:{\footnotesize$\Phi(\lambda_1,\bm\pi)=\Psi(0)$\\\footnotesize(control value)}}] (PSI0_AXIS);
  \draw[Blue,densely dotted] (PSI1) -- (PSI1 -| 0,0) coordinate[label={[label distance=0mm,align=right]left:{\footnotesize$\Phi(\lambda_1+\beta,\bm\pi)=\Psi(1)$\\\footnotesize(treatment value)}}] (PSI1_AXIS);
  \draw[Blue, <->] (PSI0_AXIS -| 0.5,0) -- (PSI1_AXIS -| 0.5, 0);
  \coordinate [label={[Blue,label distance=-1mm]left:$\Delta$}](DELTA) at ($(PSI0_AXIS -| 0.5,0)!0.5!(PSI1_AXIS -| 0.5, 0)$);
  
\end{tikzpicture}
\caption{{Matching value as a function of demand}}
\label{fig:example-function}
\end{subfigure}
\caption{{Illustrative bipartite} matching example with a single demand type ($n_d=1$) and six supply types ($n_s=6$). The demand arrival rate is $\lambda_1=1.5$ under global control, and $\lambda_1+\beta=5.5$ under global treatment. The supply arrival rate is 1 for each type. The matching values of the unique demand type {decrease geometrically for each} supply type, creating contention for the highest-value supply units of type 1. {As a result, the matching value function (solid line) is concave in the demand arrival rate; it also has seven linear pieces, with slopes $v_{1,1}, \ldots, v_{1,6}, 0$.}}
\label{fig:example-setup}
\end{figure}
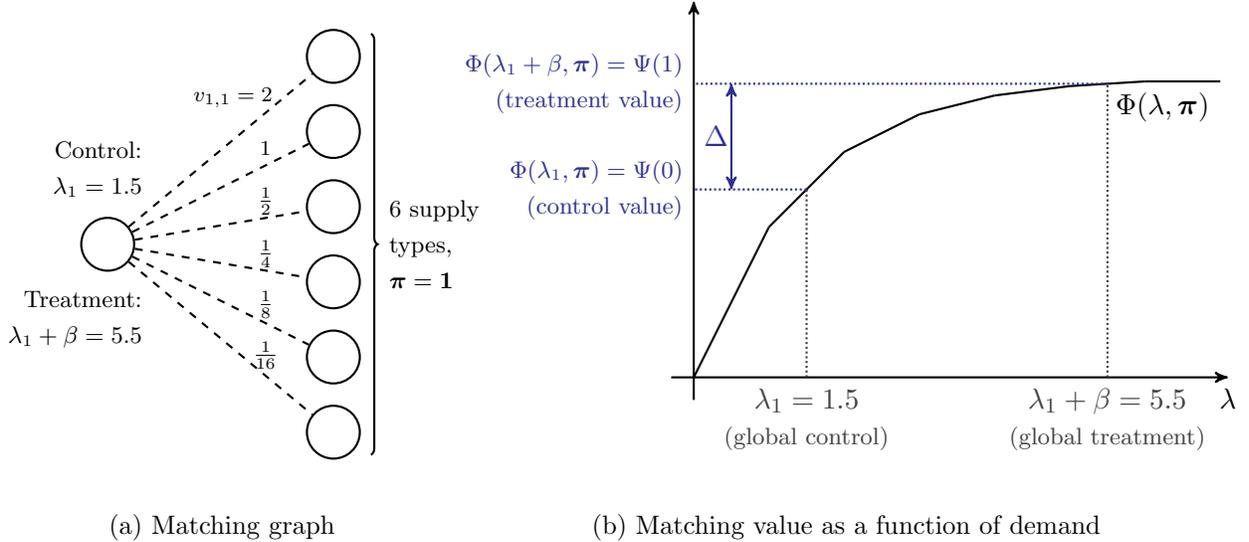

\begin{figure}[ht]
\floatbox[{\capbeside\thisfloatsetup{capbesideposition={right,top},capbesidewidth=4cm}}]{figure}[\FBwidth]
{\caption{Bias of the RCT estimator {in the example from Figure \ref{fig:example-setup}. The estimator implicitly constructs a linear approximation (dashed line) of the value function based on the observed value at the experiment state, where the demand arrival rate is $\lambda_1 + \rho\beta$. Bias occurs because this linear approximation fails to account for concavity.}}\label{fig:rct-bias}}
{\begin{tikzpicture}[
    thick,
    >=stealth'
  ]
  \coordinate (O) at (0,0);
  \draw[->] (-0.3,0) -- (7.1,0) coordinate[label = {below:$\lambda$}] (xmax);
  \draw[->] (0,-0.3) -- (0,6.5) coordinate[label = {left:}] (ymax);
  \draw[-] (0,0) -- (1,2);
  \draw[-] (1,2) coordinate (A) -- (2,3) coordinate (B);
  \coordinate (PSI0) at ($(A)!0.5!(B)$);
  \draw[-] (2,3) -- (3,3.5);
  \draw[-] (3,3.5) coordinate (I) -- (4,3.75) coordinate (J);
  \coordinate (EXP) at ($(I)!0.5!(J)$);
  \draw[-] (4,3.75) -- (5,3.875);
  \draw[-] (5,3.875) coordinate (E) -- (6,3.9375) coordinate (F);
  \draw[-] (6,3.9375) -- (7,3.9375) coordinate[label={225:$\Phi(\lambda, \bm{\pi})$}];
  \draw[darkgray,densely dotted] (1.5,0) coordinate[label = {below:$\lambda_1$}](CONTROL) -- (PSI0);
  \draw[darkgray,densely dotted] (EXP) -- (3.5,0) coordinate[label = {below:$\lambda_1+\rho\beta$}];
  \draw[darkgray,densely dotted] (5.5,0) coordinate[label = {below:$\lambda_1+\beta$}] (G) -- (5.5,6) coordinate (H);
  \coordinate (PSI1) at (intersection of E--F and G--H);
  \draw[Blue,densely dotted] (PSI0) -- (PSI0 -| 0,0) coordinate[label={[label distance=0mm, align=right]left:{$\Phi(\lambda_1,\bm\pi)=\Psi(0)$\\\footnotesize(control actual value)}}] (PSI0_AXIS);
  \draw[Blue,densely dotted] (PSI1) -- (PSI1 -| 0,0) coordinate[label={[label distance=0mm,align=right]left:{$\Phi(\lambda_1+\beta,\bm\pi)=\Psi(1)$\\\footnotesize(treatment actual value)}}] (PSI1_AXIS);
  \draw[Blue, <->] (PSI0_AXIS -| 0.5,0) -- (PSI1_AXIS -| 0.5, 0);
  \coordinate [label={[Blue,label distance=-1mm]left:$\Delta$}](DELTA) at ($(PSI0_AXIS -| 0.5,0)!0.5!(PSI1_AXIS -| 0.5, 0)$);
  \draw[OliveGreen,densely dashed] (0,0) -- ($(EXP)!-3.2cm!(0,0)$) coordinate[label={[OliveGreen]right:$\bar{v}^*\lambda$}] (PROLONG);
  \coordinate (EXP_LOWER) at (intersection of CONTROL--PSI0 and O--PROLONG);
  \draw[OliveGreen, densely dotted] (EXP_LOWER) -- (EXP_LOWER -| 0,0) coordinate[label={[align=right]left:{$\bar{v}^*\lambda_1$\\\footnotesize(control estimated value)}}] (EXP_LOWER_AXIS);
  \coordinate (EXP_UPPER) at (intersection of G--H and O--PROLONG);
  \draw[OliveGreen, densely dotted] (EXP_UPPER) -- (EXP_UPPER -| 0,0) coordinate[label={[align=right]left:{$\bar{v}^*(\lambda_1+\beta)$\\\footnotesize(treatment estimated value)}}] (EXP_UPPER_AXIS);
  \draw[OliveGreen, <->] (EXP_UPPER_AXIS -| 1.0,0) -- (EXP_LOWER_AXIS -| 1.0, 0);
  \coordinate [label={[OliveGreen, label distance = -1mm]right:$\hat{\Delta}_{\text{RCT}}$}](DELTA) at ($(EXP_UPPER_AXIS -| +1.0,0)!0.5!(EXP_LOWER_AXIS -| +1.0, 0)$);
  
\end{tikzpicture}}
\end{figure}
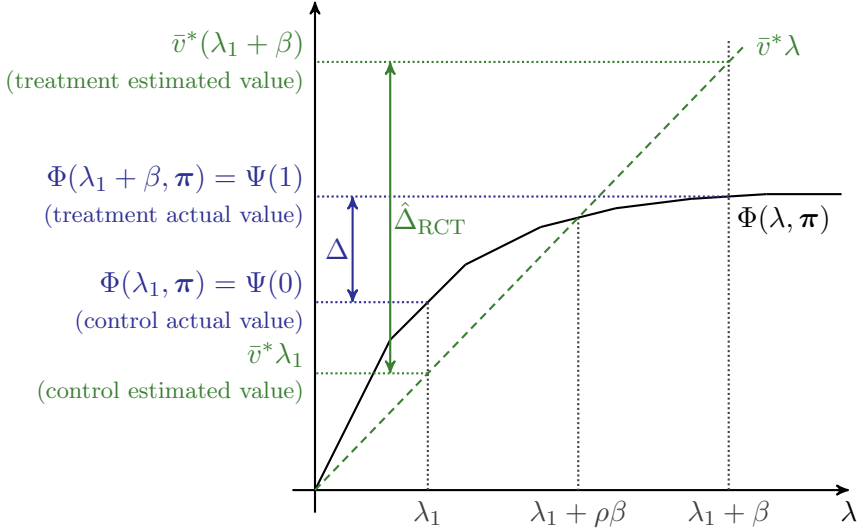

In an experimental setting, we cannot simultaneously evaluate the system value under both global control and global treatment; we \revision{can only} evaluate the value function at an intermediate state where $\lambda=\lambda_1+\rho\beta$ (in this case we assume $\rho=0.5$). Then, according to Proposition~\ref{prop:infinite-horizon-limit-rct}, the RCT estimator constructs a linear approximation of the value function, defined using two points: the observed system value in the experiment state, and the known system value of 0 when demand is 0 (the origin). Figure~\ref{fig:rct-bias} shows this linear approximation, which corresponds to the RCT estimator, as a dashed line. \revision{Clearly,} the dashed line is not a particularly good approximation of the solid line{:} it simultaneously \emph{underestimates} the system value under global control ($\bar{v}^*\lambda_1\le \Psi(0)$) while \emph{overestimating} the system value under global treatment ($\bar{v}^*(\lambda_1+\beta)\ge \Psi(1)$). As a result, the RCT estimator is biased, and significantly so when the concavity of the value function is more pronounced. In the different context of choice-based marketplaces, \cite{li2022interference} show an equivalent result to Theorem~\ref{thm:rct-overestimate} establishing the persistent overestimation behavior of standard estimators.

\section{{A First Alternative to the Standard Estimator}}\label{sec:twolp}

{We now propose a first alternative to the RCT estimator to reduce marketplace interference, which separately estimates the value function in the global control and global treatment states.}

\subsection{{The Two-LP Estimator}}

Bias in the RCT estimator occurs because the estimator's implicit approximation of the value function does not take into account its concavity. We propose an alternative approach that does not require implicitly approximating the value function. \revision{Instead}, we use experiment data to estimate the demand arrival rates $\bm{\lambda}$ under global control and $\bm{\lambda}+\bm{\beta}$ under global treatment, as well as the supply arrival rates $\bm{\pi}$. We then use these estimated arrival rates to directly \revision{estimate} $\Phi(\bm{\lambda}+\bm{\beta}, \bm{\pi})$ and $\Phi(\bm{\lambda}, \bm{\pi})$. Intuitively, we construct \revision{estimated} counterfactual settings for global treatment and global control, then compute the optimal matching in both cases. Because this estimator involves setting up and solving two more linear programs, we refer to it as the \emph{Two-LP} estimator.

\begin{definition}[Two-LP estimator]
For a given scaling parameter $\tau$, we define the Two-LP estimator for the global treatment effect as
\begin{multline*}
    \hat{\Delta}^\tau_{\text{2LP}} =\Phi\left(\hat{\bm{\lambda}}(\bm{D}^{\tau,\controlemph},\bm{D}^{\tau,\treatmentemph} )+\hat{\bm{\beta}}(\bm{D}^{\tau,\controlemph},\bm{D}^{\tau,\treatmentemph} ),\hat{\bm{\pi}}(\bm{S}^{\tau,\bm{\pi}})\right)-\\\Phi\left(\hat{\bm{\lambda}}(\bm{D}^{\tau,\controlemph},\bm{D}^{\tau,\treatmentemph} ),\hat{\bm{\pi}}(\bm{S}^{\tau,\bm{\pi}})\right),
\end{multline*}
where $\hat{\bm{\lambda}}(\cdot)$, $\hat{\bm{\beta}}(\cdot)$ and $\hat{\bm{\pi}}(\cdot)$ designate maximum-likelihood estimators for the arrival rates $\bm{\lambda}$, $\bm{\beta}$ and $\bm{\pi}$ obtained from the observed demand and supply counts.
\end{definition}

The Two-LP estimator results from a two-step approach. First, the experimental data are used to estimate the Poisson arrival rates for demand under global control and under global treatment, as well as the supply arrival rates. The estimated rates, denoted by $\hat{\bm{\lambda}}(\cdot)$, $\hat{\bm{\beta}}(\cdot)$ and $\hat{\bm{\pi}}(\cdot)$ are then used to compute the counterfactual value under global control and under global treatment.

We can simplify \revision{the} Two-LP estimator using the standard \revision{Poisson} maximum-likelihood estimator. Given that the number of \revision{control} demand units of type $i$ is sampled from a Poisson process with parameter $(1-\rho)\lambda_i$, the maximum-likelihood estimators for $\lambda_i$\revision{, $\beta_i$, and $\pi_j$ are} given by 
\[
\hat{\lambda}_i=\frac{D_i^{\tau,\control}}{(1-\rho)\tau},~
\hat{\beta}_i=\frac{D_i^{\tau,\treatment}}{\rho\tau}-\frac{D_i^{\tau,\control}}{(1-\rho)\tau} \text{, and } \hat{\pi}_j=\frac{S_j^{\tau,\pi_j}}{\tau},
\]
\revision{yielding} the following equivalent formulation for the Two-LP estimator.

\begin{proposition}
\label{prop:twolp-simplified}
The Two-LP estimator can equivalently be written as
\[
    \hat{\Delta}^\tau_{\text{2LP}} =\frac{1}{\tau}\left(\Phi_\tau\left(\frac{1}{\rho}\bm{D}^{\tau,\treatmentemph}, \bm{S}^{\tau,\bm{\pi}}\right)-\Phi_\tau\left(\frac{1}{1-\rho}\bm{D}^{\tau,\controlemph},\bm{S}^{\tau,\bm{\pi}}\right)\right).
\]
\end{proposition}

Proposition~\ref{prop:twolp-simplified} provides a simple nonparametric intuition for the Two-LP estimator. Assume the experiment is symmetric ($\rho=0.5$). Then we estimate the total demand in the global control state by doubling the control demand in the experiment state. Similarly, we estimate the total demand in the global treatment state by doubling the treatment demand in the experiment state. 

The Two-LP estimator seems like an ideal approach for the fluid limit problem, where we have a sufficient number of demand and supply units to estimate the Poisson arrival rates for both demand and supply \emph{exactly}. Indeed, we can show this estimator is unbiased in the fluid limit.

\begin{theorem}
    \label{thm:twolp-unbiased}
    In the fluid limit, the Two-LP estimator verifies: $\lim\limits_{\tau\to\infty}\hat{\Delta}^\tau_{\text{2LP}} = \Delta.$
\end{theorem}

\subsection{{Limitations of the Two-LP estimator}}

\paragraph{Theoretical limitations.}
We have established that the Two-LP estimator is asymptotically unbiased in the fluid limit.
However, the fluid limit is an idealized setting \revision{where $\tau$ is large}; it turns out that in a finite-sample setting, the Two-LP estimator does not necessarily remain unbiased. Recall that the global treatment effect we seek to estimate is given by:
\[
\Delta^\tau=\frac{1}{\tau}\mathbb{E}\left[\Phi\left(\bm{D}^{\tau,\bm{\lambda}+\bm{\beta}}, \bm{S}^{\tau,\bm{\pi}}\right)-\Phi\left(\bm{D}^{\tau,\bm{\lambda}},\bm{S}^{\tau,\bm{\pi}}\right)\right].
\]

In contrast, the expected value of the Two-LP estimator can be written as:
\[
\mathbb{E}[\hat{\Delta}^\tau_{\text{2LP}}]=\frac{1}{\tau}\mathbb{E}\left[\Phi\left(\frac{1}{\rho}\bm{D}^{\tau,\rho(\bm{\lambda+\beta})}, \bm{S}^{\tau,\bm{\pi}}\right)-\Phi\left(\frac{1}{1-\rho}\bm{D}^{\tau,(1-\rho)\bm{\lambda}},\bm{S}^{\tau,\bm{\pi}}\right)\right].
\]

The main difference between these two expectations is that the former relies on the Poisson-distributed vector $\bm{D}^{\tau,\bm{\lambda}+\bm{\beta}}$ (respectively $\bm{D}^{\tau,\bm{\lambda}}$) while the latter uses the \emph{rescaled} ---and therefore not Poisson-distributed--- vector $\frac{1}{\rho}\bm{D}^{\tau,\rho(\bm{\lambda+\beta})}$ (respectively $\frac{1}{1-\rho}\bm{D}^{\tau,(1-\rho)\bm{\lambda}}$). \revision{The rescaled demand vector has higher variance due to the $1/\rho$ factor; since the value function $\Phi(\cdot)$ is also nonlinear, the two expectations do not equal each other for finite $\tau$. We next show that for small $\tau$, there exist settings} where the Two-LP estimator exhibits considerable bias.

\begin{proposition}[\revision{Poor} behavior of the Two-LP estimator]
    \label{prop:twolp-worst-case}
    \revision{There exists a matching marketplace experiment where the bias of the Two-LP estimator, defined as $\text{\emph{Bias}}^\tau_{\text{\emph{2LP}}}=\abs{\hat{\Delta}^\tau_{\text{\emph{2LP}}} - \Delta^\tau}$, verifies:}
    \begin{align*}
        \revision{\rho \to 0} ~& \revision{\Rightarrow \text{\emph{Bias}}_{\text{\emph{2LP}}} \to 2\Delta^\tau},\\
        \revision{\rho = \frac{1}{2}} ~& \revision{\Rightarrow \text{\emph{Bias}}_{\text{\emph{2LP}}} = \frac{1}{2}\Delta^\tau},\\
        \revision{\rho \to 1} ~& \revision{\Rightarrow \text{\emph{Bias}}_{\text{\emph{2LP}}} \to \Delta^\tau}.
    \end{align*}
    \revision{In other words, the bias of the two-LP estimator is 50\% if the experiment is symmetric. If the experiment is maximally asymmetric, the bias can reach 100\% on one side and 200\% on the other.}
\end{proposition}

\revision{We can construct such a problem instance by considering} a bipartite matching instance with an equal number of demand and supply types. Each demand unit of type $i$ generates a value of 1 when matched with a supply unit of type $j=i$, and 0 otherwise; the capacity of the edge from the demand node for type $i$ to the supply node for type $j=i$ is also 1. We set low arrival rates such that the Poisson arrival processes in both global control and global treatment are well approximated by Bernoulli processes, i.e., each demand type has either zero or one arrivals. With $\rho=0.5$, the Two-LP estimator estimates the value of global treatment by doubling the number of treated units, and estimates the value of global control by doubling the number of control units. However, doubling the number of units will yield no additional value due to the capacity constraint on each type, whereas doubling the arrival rates (as we would under global control and global treatment) does increase total value to the platform. \revision{Similar flaws arise when $\rho\to 0$ or $\rho\to 1$; we note that the bias is positive in one case and negative in the other, which means the two-LP estimator can get not only the magnitude but also the sign of the intervention wrong.} The capacity constraint is not essential to our construction\revision{; supply could play the constraining role instead, but using capacity simplifies the analysis.} We \revision{illustrate} our construction in Figure~\ref{fig:twolp-failure}. 

\begin{figure}[t]
    \centering
    \includegraphics[width=\columnwidth]{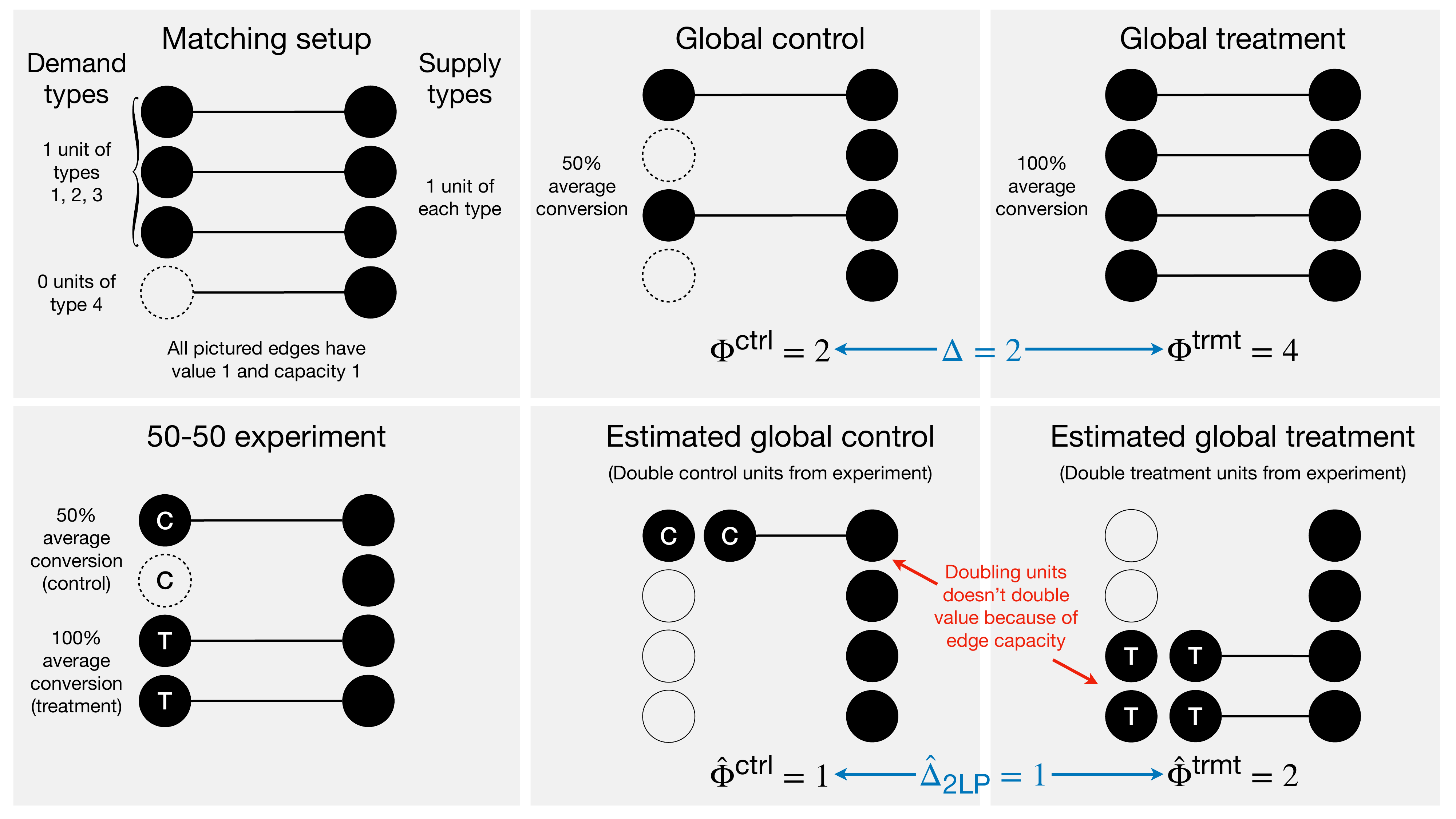}
    \caption{{Example of a situation where the Two-LP estimator fails. We set up a capacitated matching instance where edges between the $i$-th demand and $i$-th supply type have value and capacity one, while all other edges have value zero. Treatment doubles the demand arrival rates, leading to a global treatment effect of 2. However, the Two-LP estimator constructed from a 50-50 experiment doubles the \emph{units} instead of the \emph{rates}, underestimating both control and treatment values (and thus, the global treatment effect) by a factor of two.}}
    \label{fig:twolp-failure}
\end{figure}

\revision{Proposition~\ref{prop:twolp-worst-case} establishes that increased variance due to rescaling Poisson processes together with nonlinearity of the value function can lead to bias in the finite-sample setting. This bias disappears in the large-$\tau$ limit as variance goes to zero. It also increases as the experiment becomes more asymmetric and $1/\rho$ or $1/(1-\rho)$ grow large. We verify this behavior numerically in Section~\ref{sec:simulation}.}

\paragraph{Computational limitations.} In addition to this theoretical limitation in the finite-sample setting, the Two-LP estimator also suffers from practical limitations. First, because it requires solving two additional linear programs, it is more computationally intensive than the standard RCT estimator, which simply adds quantities obtained from the experiment matching linear program. This additional overhead can become cumbersome if we consider multiple matching cycles, or if we wish to perform additional analyses such as permutation tests, which require recomputing test statistics many times. Second, the Two-LP estimator requires access to the matching value function to simulate the counterfactual outcomes under global treatment and global control. Maintaining a simulation environment to perform this analysis might be a significant engineering task, for example \revision{if the} matching \revision{environment relies on} real-time signals that are difficult to replicate offline.

\section{{The Shadow Price Estimator}}
\label{sec:performance}

We now propose another estimator to reduce interference bias. In a way, this estimator interpolates between the RCT and Two-LP estimators. Like the former, it can be computed directly from experimental data, without solving additional optimization problems. Like the latter, it reduces interference bias as compared to the standard estimator.

\subsection{{Estimator Definition}}

    The RCT estimator can be biased in a marketplace setting because it does not account for the diminishing returns of additional demand units in a marketplace with constrained supply. This concept requires a notion of \emph{marginal value} of demand, which we model via LP duality. Recall that in our experiment, we obtain the dual data $(\bm{A}^{\tau,\experiment}, \bm{B}^{\tau,\experiment}, \bm{M}^{\tau,\experiment}, \bm{\Xi}^{\tau,\experiment})$.

In particular, ${A}_i^{\tau,\experiment}$ \revision{is} the marginal value (or shadow price) of a demand unit of type $i$. We can define an analog to the RCT estimator, where instead of summing the total value obtained by the treatment and control groups, we sum the total \emph{marginal} value obtained by each group.

\begin{definition}
\label{def:mdv-definition}
For a given scaling parameter $\tau$, the shadow price (SP) estimator for the global treatment effect is defined as:
\[
    \hat{\Delta}^{\tau}_{\text{\emph{SP}}}=\frac{1}{\tau}\bm{A}^{\tau,\experimentemph}\cdot\left(\frac{1}{\rho}\bm{D}^{\tau,\treatmentemph}-\frac{1}{1-\rho}\bm{D}^{\tau,\controlemph}\right).
\]
\end{definition}

Like the RCT estimator, the SP estimator consists of two terms: the first estimates the value of global treatment, while the second estimates the value of global control. However, rather than simply scaling the values of the control and treatment groups in the experiment, \revision{we} rely on linear programming sensitivity analysis to extrapolate the values of global treatment and global control.

\subsection{Performance {in the Fluid Limit}} \label{ssec:sp-performance-fluid}

In {Section~\ref{ssec:rct-performance}}, we showed that the RCT estimator {implicitly} uses a linear approximation of the value function to estimate the global treatment effect. Using a linear approximation of the value function to estimate the global treatment effect is not a bad idea. The problem with the RCT estimator is that it uses the wrong linear approximation. \revision{We now} argue that the SP estimator uses the correct linear approximation of the partial-treatment value function at the experiment state.

Let $\bm{a}^{\eta}$ denote the optimal {demand} dual variables associated with the partial-treatment matching problem {in the fluid limit} $\Phi(\bm{\lambda}+\eta\bm{\beta}, \bm{\pi})$. We \revision{first} derive \revision{the} fluid limit of the SP estimator.

\begin{proposition}
    \label{prop:mdv-estimator-limit}
    {In the fluid limit,} the SP estimator satisfies:
    \begin{equation}
    \label{eq:mdv-estimator-limit}
    \hat{\Delta}_{\text{SP}}{=\lim_{\tau\to\infty}\hat{\Delta}^\tau_{\text{\emph{SP}}}}=\bm{a}^{\rho}\cdot\bm{\beta}.
    \end{equation}
\end{proposition}

Proposition~\ref{prop:mdv-estimator-limit} establishes that the SP estimator is also based on a linear approximation of the value function. However, this approximation is constructed using first-order information obtained from the shadow prices of the matching problem. The dual variables $\bm a^\rho$ correspond to the marginal values \revision{of additional} demand units \revision{of each type}. \revision{The} inner product with $\bm \beta$ \revision{is} the gain in the objective value predicted by the constraint shadow prices (locally optimal in a Taylor series sense).

\begin{figure}[ht]
    \centering
    \begin{tikzpicture}[
    thick,
    >=stealth'
  ]
  \coordinate (O) at (0,0);
  \draw[->] (-0.3,0) -- (8,0) coordinate[label = {below:$\lambda$}] (xmax);
  \draw[->] (0,-0.3) -- (0,5) coordinate[label = {left:}] (ymax);
  \draw[-] (0,0) -- (1,2);
  \draw[-] (1,2) coordinate (A) -- (2,3) coordinate (B);
  \coordinate (PSI0) at ($(A)!0.5!(B)$);
  \draw[-] (2,3) -- (3,3.5);
  \draw[-] (3,3.5) coordinate (I) -- (4,3.75) coordinate (J);
  \coordinate (EXP) at ($(I)!0.5!(J)$);
  \draw[-] (4,3.75) -- (5,3.875);
  \draw[-] (5,3.875) coordinate (E) -- (6,3.9375) coordinate (F);
  \draw[-] (6,3.9375) -- (7,3.9375) coordinate[label={right:$\Phi(\lambda, \bm{\pi})$}];
  \draw[darkgray,densely dotted] (1.5,0) coordinate[label = {below:$\lambda_1$}](CONTROL) -- ($(PSI0)!-0.8cm!(CONTROL)$) coordinate (Z);
  \draw[darkgray,densely dotted] (EXP) -- (3.5,0) coordinate[label = {below:$\lambda_1+\rho\beta$}];
  \draw[darkgray,densely dotted] (5.5,0) coordinate[label = {below:$\lambda_1+\beta$}] (G) -- (5.5,4.3) coordinate (H);
  \coordinate (PSI1) at (intersection of E--F and G--H);
  \draw[Blue,densely dotted] (PSI0) -- (PSI0 -| 0,0) coordinate[label={[label distance=0mm]left:$\Phi(\lambda_1,\bm\pi)=\Psi(0)$}] (PSI0_AXIS);
  \draw[Blue,densely dotted] (PSI1) -- (PSI1 -| 0,0) coordinate (PSI1_AXIS);
  \coordinate[label={[Blue]left:$\Phi(\lambda_1+\beta,\bm\pi)=\Psi(1)$}](PSI1_AXIS_LABEL) at ($(PSI1_AXIS) - (0,0.1)$);
  \draw[Blue, <->] (PSI0_AXIS -| 0.5,0) -- (PSI1_AXIS -| 0.5, 0);
  \coordinate [label={[Blue,label distance=-1mm]left:$\Delta$}](DELTA) at ($(PSI0_AXIS -| 0.5,0)!0.6!(PSI1_AXIS -| 0.5, 0)$);
  \draw[RedViolet, dashdotted, very thick] ($(I)!-1.8cm!(J)$) coordinate (PROLONG_LEFT) -- ($(I)!2.85cm!(J)$) coordinate[label={[RedViolet]45:$a^\rho\lambda+C$}] (PROLONG_RIGHT);
  \coordinate (EXP_LOWER) at (intersection of CONTROL--Z and PROLONG_LEFT--PROLONG_RIGHT);
  \draw[RedViolet, densely dotted] (EXP_LOWER) -- (EXP_LOWER -| 0,0) coordinate[label={left:$a^\rho\lambda_1 + C$}] (EXP_LOWER_AXIS);
  \coordinate (EXP_UPPER) at (intersection of G--H and PROLONG_LEFT--PROLONG_RIGHT);
  \draw[RedViolet, densely dotted] (EXP_UPPER) -- (EXP_UPPER -| 0,0) coordinate (EXP_UPPER_AXIS);
  \coordinate[label={[RedViolet]left:$a^\rho(\lambda_1+\beta)+C$}](EXP_UPPER_AXIS_LABEL) at ($(EXP_UPPER_AXIS) + (0,0.1)$);
  \draw[RedViolet, <->] (EXP_UPPER_AXIS -| 1.0,0) -- (EXP_LOWER_AXIS -| 1.0, 0);
  \coordinate [label={[RedViolet, label distance = -1mm]right:$\hat{\Delta}_{\text{SP}}$}](DELTA) at ($(EXP_UPPER_AXIS -| +1.0,0)!0.5!(EXP_LOWER_AXIS -| +1.0, 0)$);
  
\end{tikzpicture}
    \caption{Linear approximation corresponding to the SP estimator.}
    \label{fig:sp-example}
\end{figure}
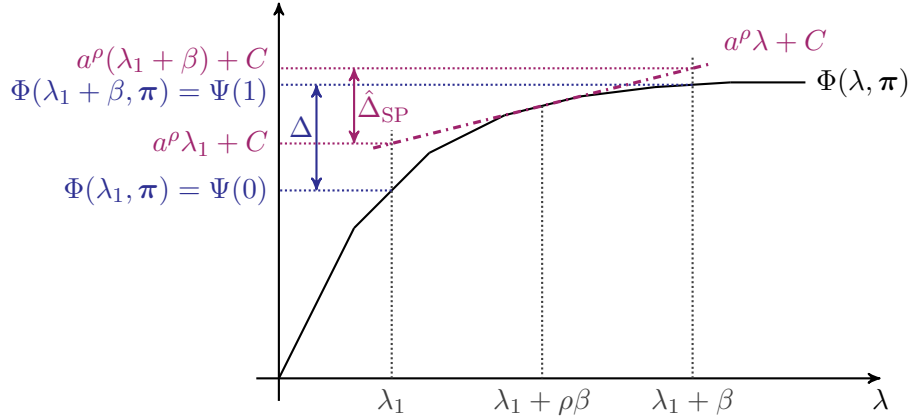

We provide evidence of this result in Figure~\ref{fig:sp-example}. In the same example {from Section~\ref{ssec:rct-performance} (recall Figure~\ref{fig:example-setup} for setup details),} we draw the shadow price-based linear approximation (dashdotted line) and the resulting SP estimator. Like the RCT estimator, the SP estimator tends to overestimate the value of global treatment. However, due to the concavity of the value function, it \emph{also} overestimates the value of global control. Depending on the relative magnitude of these two biases, the SP estimator can over- or underestimate the treatment effect. {We next show how the SP estimator reduces bias (in absolute terms) as compared to the RCT estimator.}

\paragraph{Bias reduction.}
Both the RCT and SP estimators rely on the implicit construction of a linear approximation of the partial-treatment value function $\Psi(\cdot)$. The main difference is that the RCT estimator determines the slope of its implicit approximation using the average value $\bar{v}^*_i$ obtained from each type $i$, whereas the SP estimator uses the marginal value obtained from type $i$, as captured by the shadow price $a^\rho_i$. It turns out there is a fundamental reason why $a^\rho_i$ is a useful quantity in estimating the global treatment effect $\Delta$. Recalling from Proposition~\ref{prop:concave-integrable} that $\Psi(\cdot)$ is a piecewise linear function with finitely many pieces, which implies it is Riemann-integrable, we can \revision{rewrite the global treatment effect $\Delta$ using the fundamental theorem of calculus.}

\begin{proposition}[Fundamental theorem of calculus.]
\label{prop:fundamental-theorem-calculus}
Let $\Psi'(\eta)=\bm{a}^\eta\cdot\bm{\beta}$ designate the derivative of the partial-treatment value function $\Psi(\eta)$, defined on all but finitely many points of $[0,1]$. Then we can write the global treatment effect $\Delta$ as 
\begin{equation}
    \label{eq:fundamental-theorem-calculus}
    \Delta=\Psi(1)-\Psi(0)=\int_0^1\bm{a}^\eta\cdot\bm{\beta}d\eta.
\end{equation}
\end{proposition}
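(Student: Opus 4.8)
The plan is to combine two facts established earlier in the excerpt: first, that $\Psi(\eta) = \Phi(\bm\lambda + \eta\bm\beta, \bm\pi)$ is a concave piecewise-linear function on $[0,1]$ with finitely many pieces (Proposition~\ref{prop:concave-integrable}); and second, that the shadow price vector $\bm a^\eta$ is an optimal dual solution of $\Phi(\bm\lambda + \eta\bm\beta, \bm\pi)$. The key identification to make is that $\bm a^\eta \cdot \bm\beta$ is precisely the (one-sided) derivative of $\Psi$ at $\eta$. Once that derivative identity is in hand, the statement is just the fundamental theorem of calculus applied to a piecewise-linear (hence absolutely continuous) function.

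First I would establish the derivative identity. Fix $\eta \in [0,1]$ in the interior of one of the finitely many linear pieces of $\Psi$. By LP sensitivity analysis, for $\epsilon$ small enough that $\bm\lambda + (\eta+\epsilon)\bm\beta$ stays in the same optimal-basis region, the optimal dual solution is unchanged and $\Phi(\bm\lambda + (\eta+\epsilon)\bm\beta, \bm\pi) = \bm a^\eta \cdot (\bm\lambda + (\eta+\epsilon)\bm\beta) + \bm b^\eta \cdot \bm\pi$. Differentiating in $\epsilon$ gives $\Psi'(\eta) = \bm a^\eta \cdot \bm\beta$. This holds at all but the finitely many breakpoints between linear pieces, which is exactly the set on which $\Psi'$ is stated to be defined. (At breakpoints one could still make sense of left/right derivatives, but since they form a measure-zero set they are irrelevant to the integral.)

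Next I would invoke integrability and the fundamental theorem. Since $\Psi$ is piecewise linear with finitely many pieces on $[0,1]$, it is Lipschitz, hence absolutely continuous, and its derivative $\Psi'$ (defined a.e.) is bounded and Riemann-integrable. The fundamental theorem of calculus then yields $\Psi(1) - \Psi(0) = \int_0^1 \Psi'(\eta)\, d\eta = \int_0^1 \bm a^\eta \cdot \bm\beta\, d\eta$. Combined with the corollary that $\Delta = \Phi(\bm\lambda + \bm\beta, \bm\pi) - \Phi(\bm\lambda, \bm\pi) = \Psi(1) - \Psi(0)$, this is the claimed identity.

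The main obstacle is not any deep difficulty but rather making the derivative identity $\Psi'(\eta) = \bm a^\eta \cdot \bm\beta$ fully rigorous, in particular handling the non-uniqueness of dual solutions at breakpoints and confirming that the value $\bm a^\eta \cdot \bm\beta$ is well-defined (independent of which optimal dual is chosen) on the interior of each linear piece — where, by Assumption~\ref{ass:unique}-type uniqueness on the relevant region, the dual is in fact unique. One should also double-check the degenerate edge cases ($\bm\beta = 0$, or $\Psi$ linear on all of $[0,1]$), which are trivially fine. Everything else is a direct application of standard LP duality and real analysis.
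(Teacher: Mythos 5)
Your proposal is correct and follows essentially the same route the paper takes: the paper justifies this proposition directly from Proposition~\ref{prop:concave-integrable} (concave, piecewise-linear $\Psi$ with finitely many pieces, hence an integrable derivative defined off finitely many breakpoints, with $\Psi'(\eta)=\bm{a}^\eta\cdot\bm{\beta}$ coming from standard LP sensitivity) and then applies the fundamental theorem of calculus. Your added care about dual uniqueness on the interior of each linear piece and the measure-zero breakpoints is a fine elaboration of the same argument, not a different approach.
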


Using this framework, we can directly compare the bias of the RCT and SP estimators, by comparing the true partial-treatment value function $\Psi(\cdot)$ and the respective linear approximations $\hat{\Psi}_{\text{RCT}}$ and $\hat{\Psi}_{\text{SP}}$ implicitly constructed by the RCT and SP estimators. In particular, we can now prove {a core} result of this paper: \revision{under our assumptions,} it is always possible to design an experiment such that the SP estimator is less biased than the RCT estimator.

\begin{theorem}
\label{thm:main}
 Let $\text{\emph{Bias}}_{\text{\emph{RCT}}}=|\hat{\Delta}_{\text{\emph{RCT}}}-\Delta|$ and $\text{\emph{Bias}}_{\text{\emph{SP}}}=|\hat{\Delta}_{\text{\emph{SP}}}-\Delta|$. If the experiment is symmetric, then $\text{\emph{Bias}}_{\text{\emph{SP}}} \leq \text{\emph{Bias}}_{\text{\emph{RCT}}}$.
\end{theorem}

\begin{figure}[t]
    \centering
    \input{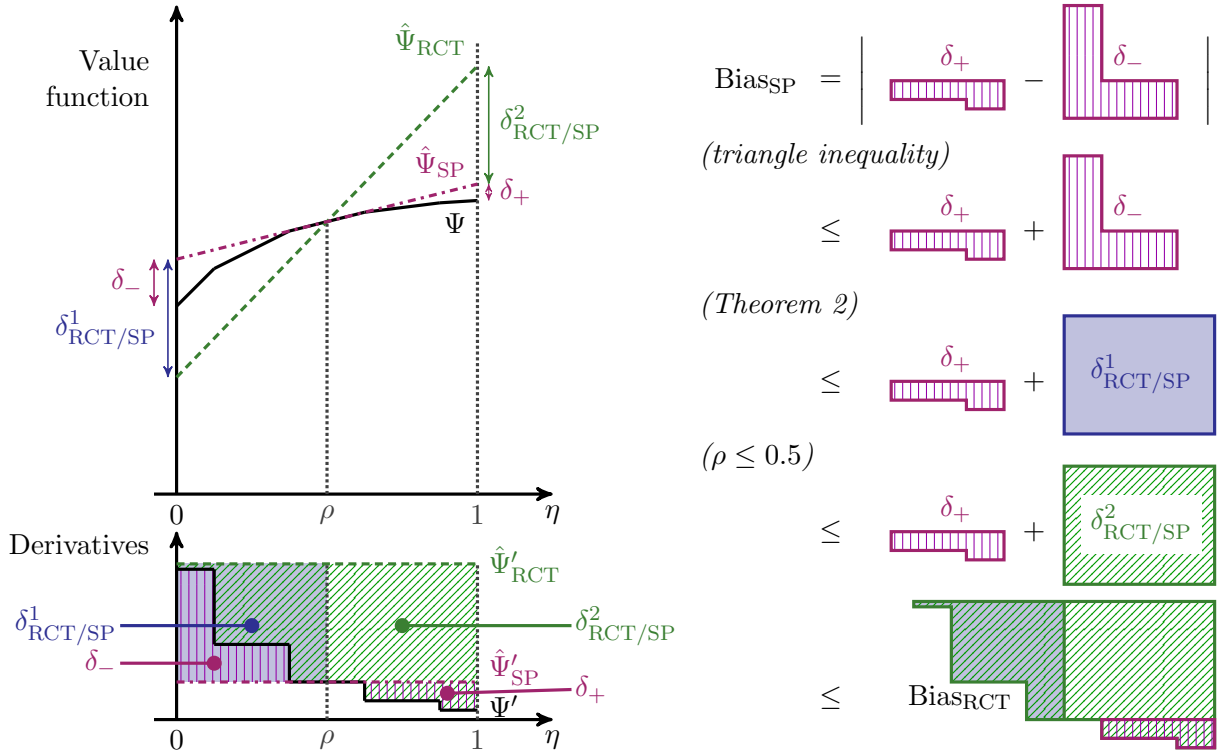}
    \caption{Proof by picture of Theorem~\ref{thm:main}, in the case where $\bm\beta\ge 0$. The absolute RCT estimator bias is given by the green, diagonally hashed area. The absolute SP estimator bias is bounded above by the purple, vertically hashed areas. We first note that the L-shaped purple area on the left, denoted $\delta_-$, is bounded by the area of the solid blue rectangle bounded by vertical lines at $\eta=0$ and $\eta=\rho$, denoted by $\delta^1_{\text{RCT/SP}}$, which is itself bounded by the area $\delta^2_{\text{RCT/SP}}$ of the green diagonally hashed rectangle on the upper {right} as long as $\rho \le 0.5$.}
    \label{fig:main-proof}
\end{figure}

The proof of Theorem~\ref{thm:main} separately considers the case of a consistently positive treatment effect ($\bm\beta\ge 0$) and a consistently negative treatment effect ($\bm\beta\le0$). We summarize the proof of the former case graphically in Figure~\ref{fig:main-proof}, noting that our reasoning is analogous in the latter case. The proof relies on exposing no more than half the population to treatment, i.e., $\rho\le0.5$. Conversely, when the treatment effect is consistently negative, we should expose no less than half the population to treatment. Symmetric experiments ($\rho=0.5$) satisfy both constraints, which is useful since we typically do not know a priori whether the treatment effect is consistently positive or negative. {Symmetric experiments are often used in practice because they tend to minimize variance; however, in some situations, rolling a treatment out to half of the marketplace may be too costly. We discuss this issue further in Section~\ref{ssec:beyond-symmetric} and propose \revision{two} simple methods to address it.}

\paragraph{Variance reduction.}
The SP estimator also improves upon the RCT estimator in terms of variance. The following result compares the asymptotic variances of the two estimators.

\begin{theorem}
    \label{thm:mdv-variance}
    \revision{Under Assumptions~\ref{ass:blind} and \ref{ass:unique}}, the asymptotic variance of the SP estimator is no larger than the variance of the RCT estimator, i.e.,
    \begin{equation}
    \label{eq:mdv-estimator-variance}
     \lim_{\tau\to\infty}\Var{\sqrt{\tau}\hat{\Delta}_{\mathrm{SP}}^\tau} \le  \lim_{\tau\to\infty}\Var{\sqrt{\tau}\hat{\Delta}_{\mathrm{RCT}}^\tau}.
    \end{equation}
\end{theorem}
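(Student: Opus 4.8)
The plan is to work entirely in the fluid limit, decompose the fluctuation of each estimator into mutually uncorrelated pieces by nested conditioning, and then compare the pieces one at a time, using LP complementary slackness at every step.

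\textbf{Step 1 (fluid-limit / good-event reduction).} By Assumption~\ref{ass:unique} the dual optimum of $\Phi(\bm\lambda+\rho\bm\beta,\bm\pi)$ is unique, which for a concave piecewise-linear function is equivalent to $\Phi$ being differentiable, hence affine, on a neighborhood $U$ of $(\bm\lambda+\rho\bm\beta,\bm\pi)$. Since $\tfrac1\tau\bm D^{\tau,\experiment}\to\bm\lambda+\rho\bm\beta$ and $\tfrac1\tau\bm S^\tau\to\bm\pi$ with Poisson (sub-exponential) concentration, the event $E^\tau=\{(\tfrac1\tau\bm D^{\tau,\experiment},\tfrac1\tau\bm S^\tau)\in U\}$ has $\mathbb{P}[(E^\tau)^c]\le e^{-c\tau}$ for some $c>0$. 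On $E^\tau$ the optimal basis is fixed, so the primal solution $(X^{\tau,\experiment}_{ij})$ is a fixed affine function of $(\bm D^{\tau,\experiment},\bm S^\tau)$ and the optimal duals satisfy $\bm A^{\tau,\experiment}=\bm a^\rho$, $\bm B^{\tau,\experiment}=\bm b^\rho$ \emph{exactly}. Because every quantity entering the two estimators is a uniformly bounded coefficient times a linear combination of the Poisson variables $\bm D^{\tau,\treatment},\bm D^{\tau,\control},\bm S^\tau$ (or of matching variables bounded by total demand), the contribution of $(E^\tau)^c$ to $\Var{\sqrt\tau\hat\Delta^\tau_{\mathrm{RCT}}}$ and $\Var{\sqrt\tau\hat\Delta^\tau_{\mathrm{SP}}}$ is $o(1)$, and the same moment bounds give the uniform integrability needed for the two limits in Eq.~\eqref{eq:mdv-estimator-variance} to exist. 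From here on I argue as if $E^\tau$ always holds.

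\textbf{Step 2 (orthogonal decomposition).} Let $\mathcal G^\tau=\sigma(\bm D^{\tau,\experiment},\bm S^\tau)\subseteq\mathcal F^\tau=\sigma(\bm D^{\tau,\treatment},\bm D^{\tau,\control},\bm S^\tau)$, and set $W_i^\tau=\tfrac1\rho D_i^{\tau,\treatment}-\tfrac1{1-\rho}D_i^{\tau,\control}$ and $\bar v_i^\tau=\sum_j v_{ij}X_{ij}^{\tau,\experiment}/D_i^{\tau,\experiment}$. Assumption~\ref{ass:blind} yields $\mathbb{E}[\hat\Delta^\tau_{\mathrm{RCT}}\mid\mathcal F^\tau]=\tfrac1\tau\sum_i\bar v_i^\tau W_i^\tau$ and, after a further conditioning, $\mathbb{E}[\hat\Delta^\tau_{\mathrm{RCT}}\mid\mathcal G^\tau]=\tfrac1\tau\sum_{i,j}\tfrac{\beta_i}{\lambda_i^{\experiment}}v_{ij}X_{ij}^{\tau,\experiment}$, while $\hat\Delta^\tau_{\mathrm{SP}}=\tfrac1\tau\sum_i a_i^\rho W_i^\tau$ is itself $\mathcal F^\tau$-measurable with $\mathbb{E}[\hat\Delta^\tau_{\mathrm{SP}}\mid\mathcal G^\tau]=\tfrac1\tau\sum_i a_i^\rho\tfrac{\beta_i}{\lambda_i^{\experiment}}D_i^{\tau,\experiment}$. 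Telescoping $\sqrt\tau(\hat\Delta^\tau_{\bullet}-\hat\Delta_{\bullet})$ around $\mathcal F^\tau$ and $\mathcal G^\tau$ splits each estimator into uncorrelated terms (by the tower property): a \emph{matching-label} term (mean zero given $\mathcal F^\tau$; present only for RCT), a \emph{label-split} term (mean zero given $\mathcal G^\tau$), and a \emph{demand/supply} term ($\mathcal G^\tau$-measurable). Since the label assignments are conditionally independent across types given $\mathcal G^\tau$, the conditional variances factorize, and passing to the limit (using $\bar v_i^\tau\to\bar v_i^*$, $\tfrac1\tau D_i^{\tau,\experiment}\to\lambda_i^{\experiment}$, $\tfrac1\tau X_{ij}^{\tau,\experiment}\to x_{ij}^*$ from Theorem~\ref{thm:fluid-limit}) gives
\[ \lim_{\tau\to\infty}\Var{\sqrt\tau\hat\Delta^\tau_{\mathrm{RCT}}}=V_{\mathrm{match}}+\sum_i(\bar v_i^*)^2\bar\nu_i+V^{\mathrm{ds}}_{\mathrm{RCT}},\qquad \lim_{\tau\to\infty}\Var{\sqrt\tau\hat\Delta^\tau_{\mathrm{SP}}}=\sum_i(a_i^\rho)^2\bar\nu_i+V^{\mathrm{ds}}_{\mathrm{SP}}, \]
with $V_{\mathrm{match}}\ge0$, $\bar\nu_i=\lambda_i^{\experiment}p_i(1-p_i)/(\rho^2(1-\rho)^2)\ge0$, $V^{\mathrm{ds}}_{\mathrm{RCT}}=\lim_{\tau\to\infty}\tfrac1\tau\Var{\sum_{ij}\tfrac{\beta_i}{\lambda_i^{\experiment}}v_{ij}X_{ij}^{\tau,\experiment}}$, and $V^{\mathrm{ds}}_{\mathrm{SP}}=\lim_{\tau\to\infty}\tfrac1\tau\Var{\sum_i a_i^\rho\tfrac{\beta_i}{\lambda_i^{\experiment}}D_i^{\tau,\experiment}}$.

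\textbf{Step 3 (term-by-term comparison).} The matching-label term is free: $V_{\mathrm{match}}\ge0$. For the label-split terms I use the pointwise bound $\bar v_i^*\ge a_i^\rho\ge0$ for every $i$ — this is ``marginal value $\le$ average value'' and follows from complementary slackness (if $a_i^\rho>0$ the demand-$i$ constraint binds, so $\bar v_i^*$ is a convex combination of matched values $v_{ij}=a_i^\rho+b_j^\rho\ge a_i^\rho$; if $a_i^\rho=0$ it is trivial), so that $\sum_i(\bar v_i^*)^2\bar\nu_i\ge\sum_i(a_i^\rho)^2\bar\nu_i$. It remains to show $\sum_i\big((\bar v_i^*)^2-(a_i^\rho)^2\big)\bar\nu_i+V^{\mathrm{ds}}_{\mathrm{RCT}}\ge V^{\mathrm{ds}}_{\mathrm{SP}}$. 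Complementary slackness again gives $\sum_j v_{ij}X_{ij}^{\tau,\experiment}=a_i^\rho D_i^{\tau,\experiment}+\sum_j b_j^\rho X_{ij}^{\tau,\experiment}$, so the demand/supply term of RCT equals that of SP plus a correction $\tfrac1\tau\sum_i\tfrac{\beta_i}{\lambda_i^{\experiment}}\sum_j b_j^\rho X_{ij}^{\tau,\experiment}$; I would control this correction's effect on the variance using the sensitivity identity $\sum_{i,j}b_j^\rho\,\partial X_{ij}^{\tau,\experiment}/\partial d_k=0$ (a consequence of $\partial\Phi/\partial d_k=a_k^\rho$ together with complementary slackness) and the fact that, on $E^\tau$, $(X^{\tau,\experiment}_{ij})$ is a fixed affine function of $(\bm D^{\tau,\experiment},\bm S^\tau)$ whose components have diagonal limiting covariance, reducing the claim to a finite-dimensional quadratic-form inequality in which the slack $\sum_i((\bar v_i^*)^2-(a_i^\rho)^2)\bar\nu_i$ — itself governed by $\bar v_i^*-a_i^\rho=\sum_j b_j^\rho x_{ij}^*/\lambda_i^{\experiment}\ge0$, with $b_j^\rho\ge0$ — absorbs any deficit. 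I expect this last step to be the main obstacle: reconciling the sampling variance of the gradient-weighted LP value $\sum_i\tfrac{\beta_i}{\lambda_i^{\experiment}}\sum_j v_{ij}X^{\tau,\experiment}_{ij}$ with that of the shadow-price linearization $\bm a^\rho\cdot\bm\beta$, while carrying along the label-split slack, is the one place where the polyhedral structure of the matching LP must be exploited; by contrast, the fluid-limit reduction and the orthogonal decomposition are routine.
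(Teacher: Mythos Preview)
Your Steps 1 and 2 are more careful than the paper's, but the further conditioning on $\mathcal G^\tau$ is precisely where you create the gap you then cannot close. After conditioning on $\mathcal F^\tau$ you already have
\[
\mathbb{E}\bigl[\hat\Delta^\tau_{\mathrm{RCT}}\mid\mathcal F^\tau\bigr]=\tfrac1\tau\sum_i\bar V_i^\tau W_i^\tau,
\qquad
\hat\Delta^\tau_{\mathrm{SP}}=\tfrac1\tau\sum_i a_i^\rho W_i^\tau,
\]
i.e.\ both estimators are linear combinations of the \emph{same} random vector $\bm W^\tau=\bigl(\tfrac1\rho\bm D^{\tau,\treatment}-\tfrac1{1-\rho}\bm D^{\tau,\control}\bigr)$, with coefficients $\bar V_i^\tau\to\bar v_i^*$ and $a_i^\rho$. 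The paper stops here: since the $W_i^\tau$ are independent across $i$, one freezes the RCT coefficients at their almost-sure limit and obtains
\[
\lim_{\tau\to\infty}\Var{\sqrt\tau\hat\Delta^\tau_{\mathrm{RCT}}}\;\ge\;\sum_i(\bar v_i^*)^2\zeta_i,
\qquad
\lim_{\tau\to\infty}\Var{\sqrt\tau\hat\Delta^\tau_{\mathrm{SP}}}\;=\;\sum_i(a_i^\rho)^2\zeta_i,
\]
with $\zeta_i=\lim_{\tau}\tfrac1\tau\Var{W_i^\tau}$. The whole theorem then reduces to the single pointwise inequality $\bar v_i^*\ge a_i^\rho$, which you already proved via complementary slackness.

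By instead projecting onto $\mathcal G^\tau$, you replace the common vector $\bm W^\tau$ by two \emph{different} objects: $\sum_j v_{ij}X_{ij}^{\tau,\experiment}$ for RCT and $D_i^{\tau,\experiment}$ for SP. The RCT piece now carries cross-type correlations through the LP solution map and supply noise through $\bm S^\tau$, so the comparison $V^{\mathrm{ds}}_{\mathrm{RCT}}$ versus $V^{\mathrm{ds}}_{\mathrm{SP}}$ is no longer a diagonal type-by-type inequality. Your proposed fix via the sensitivity identity $\sum_{i,j}b_j^\rho\,\partial X_{ij}^{\tau,\experiment}/\partial d_k=0$ and a ``quadratic-form inequality with slack'' is not a proof; in particular, nothing in your sketch controls the sign of the covariance between the SP demand/supply term and the RCT correction term, and the slack $\sum_i((\bar v_i^*)^2-(a_i^\rho)^2)\bar\nu_i$ lives in the label-split layer, not the demand/supply layer, so it is not obvious it can absorb anything there. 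The fix is not to push harder on Step 3 but to drop the $\mathcal G^\tau$ layer entirely.
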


The variance of the SP estimator is tricky to analyze {because the breakpoints in the piecewise linear matching function can make it difficult to estimate the second derivative of the partial-treatment value function.} However, in the fluid limit, the probability that the system operates at a breakpoint in the partial-treatment value function goes to 0. In this setting, the RCT {(respectively SP) estimator is the product of the average (respectively marginal) value of a type with the increase in demand for that type. Because the average value is always greater than or equal to the marginal value, the RCT estimator is more sensitive to variance in demand.}

\revision{U}nlike our characterization of bias, Theorem~\ref{thm:mdv-variance} does not require sign-consistency of the treatment effect on demand $\bm\beta$. When analyzing bias, sign-consistency \revision{precludes} the case where the RCT estimator overestimates both the positive effect from increased demand of one type and the negative effect from decreased demand of another type, in such a way that the biases cancel. When it comes to variance, however, overestimating the magnitude of either a positive or negative effect will result in the same contribution to overall variance. Theorem~\ref{thm:mdv-variance} suggests that the bias correction obtained via the shadow price estimator does not come at the expense of low variance.

\subsection{{Finite-sample Performance}} \label{ssec:finite-sample}

{
\revision{The} SP estimator reduces bias as compared to the standard RCT estimator in the fluid limit. However, unlike the Two-LP estimator, it \revision{may} not eliminate interference bias altogether in this setting. We now explore why the SP estimator may \revision{still} be preferable to the Two-LP estimator.

In particular, performance in the fluid limit does not tell the whole story. Indeed, Proposition~\ref{prop:twolp-worst-case} shows that the Two-LP estimator can be strongly biased in the finite-sample regime. It is therefore of interest to study the behavior of the SP estimator in the finite-sample case. We first define the generalization of the partial-treatment function to the finite-sample setting:

\begin{definition}
    Given a matching experiment with supply arrival rate $\bm{\pi}$, demand arrival rate of $\bm{\lambda}$ under global control, and demand arrival rate $\bm{\lambda}+\bm{\beta}$ under global treatment, for $\eta\in[0,1]$, the finite-sample partial-treatment value function is given by $$\Psi_\tau(\eta)=\mathbb{E}\left[\revision{\frac{1}{\tau}}\Phi_\tau(\bm{D}^{\tau,\bm{\lambda}+\eta\bm{\beta}}, \bm{S}^{\tau,\bm{\pi}})\right].$$
\end{definition}

By construction, the scaled finite-sample partial-treatment value function $\Psi_{\tau}(\eta)$ tends to the partial-treatment value function $\Psi(\eta)$, i.e. $\Psi(\eta)=\lim_{\tau\to\infty}\Psi_\tau(\eta)$. Like $\Psi(\cdot)$, $\Psi_\tau(\cdot)$ is concave and continuous. However, while $\Psi(\cdot)$ is piecewise linear, $\Psi_\tau(\cdot)$ is smooth -- its derivative is uniquely defined everywhere. In the finite-sample setting, the \revision{random} arrivals on the supply and demand side effectively ``smooth out'' the breakpoints in the underlying value function.
Once again, the global treatment effect verifies $\Delta^\tau=\Psi_{\tau}(1)-\Psi_{\tau}(0)$. \revision{We now establish} that, as in the fluid limit, the SP estimator results from a first-order approximation of the partial-treatment value function.

\begin{theorem}
    \label{thm:shadow-price-derivative}
    For any $\tau>0$, we have that
    \begin{equation}
        \mathbb{E}\left[\hat{\Delta}^\tau_{\text{SP}}\right]=\frac{\partial}{\partial\eta}\mathbb{E}\left[\revision{\frac{1}{\tau}}\Phi_\tau(\bm{D}^{\tau,\bm{\lambda}+\eta\bm{\beta}}, \bm{S}^{\tau,\bm{\pi}})\right]\Bigg\rvert_{\eta=\rho}=\Psi'_{\tau}(\rho)
    \end{equation}
\end{theorem}
}

{
Theorem~\ref{thm:shadow-price-derivative} states that the shadow price estimator correctly estimates the derivative of the partial-treatment value function at the experiment state ($\eta=\rho$). 
This principled behavior immediately allows us to bound the expected bias of the shadow price estimator as follows.

\begin{corollary}
Assume that the partial-treatment value function is twice differentiable with its second derivative bounded by $M$ on $[0,1]$, i.e., $\abs{\Psi''_{\tau}(\eta)}\le M,~\forall\eta\in[0,1].$
Then the expected bias of the shadow price estimator is bounded above by:
\[
\abs{\mathbb{E}\left[\hat{\Delta}^\tau_{\text{SP}}\right]-\Delta^\tau}\le \frac{M}{2}.
\]
\end{corollary}

Superficially, Theorem~\ref{thm:shadow-price-derivative} seems like an intuitive result: taking the difference in total marginal value between the treatment and control group yields the ``correct'' derivative of the partial-treatment value function. However, the proof is nontrivial, and relies on the particular form of the Poisson probability mass function, as well as the dual structure of the generalized matching problem. The proof also relies on the following result, which might be of independent interest.}

\begin{theorem}[Local linearity of matching function]\label{thm:directional-derivative}
Consider the {generalized} marketplace matching problem for some $\tau$ with demand $\bm{d}\in\mathbb{Z}_+^{n_d}$ and supply $\bm{s}\in\mathbb{Z}_+^{n_s}$. {Further assume that all capacities are integer: $k_{u,w}\tau\in\mathbb{Z}$ for each edge $(u,w)$.} Consider a demand perturbation $\bm{\varepsilon}\in\mathbb{R}_+^{n_d}$ such that $\sum_{i=1}^{n_d}\varepsilon_i\le 1$, and let $\bm{e}_i\in\mathbb{R}^{n_d}$ designate the $i$-th unit vector. Then the following two linearity results hold for any $\tau$:
\begin{equation}
    \Phi_\tau\left(\bm{d}+\bm{\varepsilon},\bm{s}\right)=\Phi_\tau\left(\bm{d},\bm{s}\right)+\sum_{i=1}^{n_d}\varepsilon_i\left[\Phi_\tau\left(\bm{d}+\bm{e}_i,\bm{s}\right) - \Phi_\tau\left(\bm{d},\bm{s}\right)\right].\label{eq:linear-positive}
\end{equation}
\begin{equation}
    \Phi_\tau\left(\bm{d}-\bm{\varepsilon},\bm{s}\right)=\Phi_\tau\left(\bm{d},\bm{s}\right)-\sum_{i=1}^{n_d}\varepsilon_i\left[\Phi_\tau\left(\bm{d},\bm{s}\right) - \Phi_\tau\left(\bm{d}-\bm{e}_i,\bm{s}\right)\right].\label{eq:linear-negative}
\end{equation}
\end{theorem}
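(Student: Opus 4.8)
The plan is to prove the two identities by exploiting the total unimodularity of the matching constraint matrix, which we already used in Section~\ref{sec:two-estimators} to relax integrality. First I would establish \eqref{eq:linear-positive}. The right-hand side is manifestly a concave combination weighted by the $\varepsilon_i$ (together with the ``leftover'' weight $1-\sum_i\varepsilon_i$ on the unperturbed point $\Phi(\bm d,\bm s)$), so by concavity of $\Phi(\cdot,\bm s)$ (Proposition~\ref{prop:concave-integrable} applied along each coordinate direction, or directly from LP duality) we immediately get $\Phi(\bm d+\bm\varepsilon,\bm s)\ge$ RHS. For the reverse inequality, the key observation is that on the segment from $\bm d$ to $\bm d+\bm e_i$, the function $t\mapsto\Phi(\bm d+t\bm e_i,\bm s)$ for $t\in[0,1]$ is linear: because the constraint matrix is totally unimodular, the vertices of the feasible polytope at integer right-hand sides are integer, and the value function of an LP as a function of the right-hand side is piecewise linear and concave with breakpoints only where the optimal basis changes; I would argue that no breakpoint can occur strictly between two consecutive integers when the step is a single unit vector. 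This reduces the claim to showing that the whole small simplex $\{\bm d+\bm\varepsilon:\varepsilon\ge0,\sum_i\varepsilon_i\le1\}$ lies in a single region of linearity of $\Phi(\cdot,\bm s)$, i.e.\ that $\Phi$ restricted to that simplex is affine. The cleanest way to see this is via the dual \eqref{eq:matching-dual}: let $(\bm a,\bm b)$ be an optimal dual solution at $\bm d$; then $(\bm a,\bm b)$ remains dual-feasible at every $\bm d+\bm\varepsilon$ (the dual feasible region does not depend on $\bm d$), so $\Phi(\bm d+\bm\varepsilon,\bm s)\le \bm a\cdot(\bm d+\bm\varepsilon)+\bm b\cdot\bm s = \Phi(\bm d,\bm s)+\bm a\cdot\bm\varepsilon$, and symmetrically $\Phi(\bm d+\bm e_i,\bm s)\le \Phi(\bm d,\bm s)+a_i$; combined with the concavity direction and the fact that $\Phi(\bm d+\bm e_i,\bm s)-\Phi(\bm d,\bm s)$ is exactly the relevant one-sided slope, the two sides pinch together.

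To make the pinching rigorous I would proceed as follows. Write $g_i^+ := \Phi(\bm d+\bm e_i,\bm s)-\Phi(\bm d,\bm s)$ for the right-difference in coordinate $i$. By concavity of $\Phi(\cdot,\bm s)$ and the integer-vertex property, $\Phi(\bm d+t\bm e_i,\bm s)=\Phi(\bm d,\bm s)+t\,g_i^+$ for all $t\in[0,1]$, so $g_i^+$ is a valid (one-sided) partial derivative at $\bm d$, hence $g_i^+\le a_i$ for any optimal dual $\bm a$ at $\bm d$; but also $\bm a\cdot\bm e_i = a_i$ is a supergradient, so $\Phi(\bm d+\bm e_i,\bm s)\le \Phi(\bm d,\bm s)+a_i$ forces $g_i^+ = a_i$ whenever $a_i$ is chosen as a valid right-derivative — and one can always choose such an optimal dual since $\bm d+\varepsilon\bm e_i$ is feasible for small $\varepsilon>0$. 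Then for the simplex point $\bm d+\bm\varepsilon$ with $\sum_i\varepsilon_i\le1$: the upper bound $\Phi(\bm d+\bm\varepsilon,\bm s)\le \Phi(\bm d,\bm s)+\sum_i\varepsilon_i a_i = \Phi(\bm d,\bm s)+\sum_i\varepsilon_i g_i^+$ comes from dual feasibility, and the matching lower bound comes from concavity applied to the representation $\bm d+\bm\varepsilon = \sum_i \varepsilon_i(\bm d+\bm e_i) + (1-\sum_i\varepsilon_i)\bm d$. That establishes \eqref{eq:linear-positive}.

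For \eqref{eq:linear-negative}, I would apply \eqref{eq:linear-positive} at the shifted point $\bm d' := \bm d-\bm\varepsilon$ (note $\bm d'$ need not be integer, but the dual-feasibility argument above never used integrality of the base point — only that the step from $\bm d'$ stays within an $\ell_1$-ball of radius $1$, which holds since $\bm d' + \bm\varepsilon = \bm d$ and we may further decompose), or more symmetrically just rerun the identical argument with left-differences $g_i^- := \Phi(\bm d,\bm s)-\Phi(\bm d-\bm e_i,\bm s)$ and the optimal dual at $\bm d$ viewed as a subgradient from the left; the dual feasible region is still $\bm d$-independent so $\Phi(\bm d-\bm\varepsilon,\bm s)\le \bm a\cdot(\bm d-\bm\varepsilon)+\bm b\cdot\bm s$ for the optimal dual at $\bm d$, giving one inequality, and concavity gives the other. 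The main obstacle I anticipate is the careful handling of dual non-uniqueness and the one-sided derivatives: at a breakpoint of the value function there are multiple optimal duals, and one must select the one whose $i$-th component equals the right-derivative $g_i^+$ (resp.\ left-derivative $g_i^-$); I would resolve this by invoking the standard LP sensitivity fact that the set of optimal duals is exactly the superdifferential $\partial\Phi(\bm d,\bm s)$ with respect to the right-hand side, so the extreme one-sided slopes are attained, plus the total-unimodularity fact that guarantees linearity of $\Phi(\cdot,\bm s)$ over each unit step and hence over the small simplex — no hidden breakpoint can occur at a non-integer point within $\ell_1$-distance $1$ of an integer point because the relevant optimal basis, being integral, stays optimal throughout.
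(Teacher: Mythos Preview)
Your concavity lower bound is correct and coincides with the paper's. The gap is in the upper bound. You need a \emph{single} optimal dual $(\bm a,\bm b)$ at $\bm d$ with $a_i=g_i^+:=\Phi(\bm d+\bm e_i,\bm s)-\Phi(\bm d,\bm s)$ simultaneously for \emph{every} $i$; only then does the supergradient inequality $\Phi(\bm d+\bm\varepsilon,\bm s)\le\Phi(\bm d,\bm s)+\sum_i\varepsilon_i a_i$ pinch against the concavity lower bound. Your TU argument does establish that $\Phi(\cdot,\bm s)$ is linear along each ray from the integer point $\bm d$ out to $\ell_1$-distance $1$ (indeed, for the basis optimal at $\bm d+t\bm\varepsilon$ for small $t>0$, every basic variable has the form $\text{integer}+t\cdot c$ with $|c|\le\sum_i\varepsilon_i\le1$, so nothing crosses zero before $t=1$). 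But ray-linearity only tells you that for each direction $\bm\varepsilon$ there is \emph{some} optimal dual $\bm a^{\bm\varepsilon}$ realizing the slope in that direction; different coordinate directions $\bm e_i$ may select different extreme duals when the primal is degenerate at $\bm d$, which is the generic situation for matching LPs with integer data. The sentence ``the relevant optimal basis, being integral, stays optimal throughout'' is precisely the assertion you have not proved: at a degenerate vertex several bases are optimal, and you have not shown one of them is primal-feasible over the \emph{entire} simplex rather than just along one ray.

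The fix you are missing is that the optimal dual set of the bipartite matching LP is a lattice: if $(\bm a,\bm b)$ and $(\bm a',\bm b')$ are both optimal duals, then so are $(\bm a\wedge\bm a',\bm b\vee\bm b')$ and $(\bm a\vee\bm a',\bm b\wedge\bm b')$ (feasibility is immediate from $a_i+b_j\ge v_{i,j}$, and the two objective values sum to $2\Phi(\bm d,\bm s)$ while each is at least $\Phi(\bm d,\bm s)$). Hence there is a unique coordinatewise-minimal $\bm a^{\min}$ in the optimal set, and it satisfies $a_i^{\min}=\min_{\bm a\text{ opt}}a_i=g_i^+$ for every $i$ at once. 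With that dual your upper bound goes through. The paper takes a different route that sidesteps duality entirely: it first treats rational $\bm\varepsilon=\bm f/N$, uses the scaling identity (Lemma~\ref{lem:scaled-problem}) to rewrite $\Phi(\bm d+\bm\varepsilon,\bm s)=\tfrac1N\Phi(N\bm d+\bm f,N\bm s)$ as an integer problem, and then gets the upper bound by telescoping unit increments from $N\bm d$ to $N\bm d+\bm f$, bounding each increment via diminishing returns and the observation that the augmenting path from $\Phi(N\bm d,N\bm s)$ to $\Phi(N\bm d+\bm e_k,N\bm s)$ is the same as from $\Phi(\bm d,\bm s)$ to $\Phi(\bm d+\bm e_k,\bm s)$. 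Continuity of $\Phi$ then handles irrational $\bm\varepsilon$.
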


{
Proposition~\ref{prop:concave-integrable} (LP sensitivity analysis) tells us that the function $\Phi_\tau(\cdot,\cdot)$ is concave and piecewise linear. Theorem~\ref{thm:directional-derivative} further tells us that for any particular supply/demand configuration, the matching value function is linear in any sufficiently small perturbation in demand. The result allows us to define subgradients of the value function $\bm{a}^+$ and $\bm{a}^-$, such that:
\begin{align}
    a_i^+ &= \Phi_\tau\left(\bm{d}+\bm{e}_i,\bm{s}\right) - \Phi_\tau\left(\bm{d},\bm{s}\right)\\
    a_i^- &= \Phi_\tau\left(\bm{d},\bm{s}\right) - \Phi_\tau\left(\bm{d}-\bm{e}_i,\bm{s}\right) \label{eq:downward-dual}.
\end{align}

These two subgradients are identical \revision{when} the matching value function is locally linear, and the optimal shadow prices are unique ($\bm{a}^+=\bm{a}^-=\bm{a}$). \revision{This corresponds to Assumption~\ref{ass:unique}, which is reasonable in the fluid limit} since the rates $\bm{\lambda}$, $\bm{\beta}$ and $\bm{\pi}$ are real numbers. However, in the finite-sample setting, the optimal solution may be degenerate, i.e, the optimal dual solution may not be unique. In this case, we can choose any optimal dual solution. Two choices are $a_i^+$ and $a_i^-$: the marginal value of \emph{adding} a unit of demand of type $i$, and the marginal value of \emph{removing} a unit of demand of type $i$. \revision{Intuitively, we} are more interested in the latter, therefore, when the optimal shadow prices are not unique, we select $a_i=a_i^-$. \revision{This also turns out to be the right choice to ensure Theorem~\ref{thm:shadow-price-derivative} holds.} We discuss how to compute $\bm{a}^+$ and $\bm{a}^-$ in Section~\ref{sec:simulation}.
}

\subsection{Beyond Symmetric Experiments} \label{ssec:beyond-symmetric}

{So far, we have established that the SP estimator reduces bias from the RCT estimator in the fluid limit, and remains well-behaved in the finite-sample regime. However, \revision{bias reduction} depends on a symmetric experimental design, i.e., where $\rho=0.5$. This assumption is limiting: in practice, platforms may want to roll out a potentially expensive or risky treatment to a much smaller fraction of the market. We now explore ways to \revision{relax} this assumption. We begin with a counter-example that shows what happens in the fluid limit when the symmetric design is violated.}

\begin{proposition}
    \label{prop:nonsymmetric-breaking}
    If the experiment is not symmetric ($\rho\neq0.5$), it is possible to construct an example where $\text{\emph{Bias}}_{\text{\emph{SP}}} > \text{\emph{Bias}}_{\text{\emph{RCT}}}$.
\end{proposition}

We can prove Proposition~\ref{prop:nonsymmetric-breaking} by constructing such an example. We consider a single demand type ($n_d=1$) and supply type ($n_s=1$), verifying $\lambda_1=0$ (demand in global control), $\beta_1=1$ (demand in global treatment), and $\pi_1=0.625$ (supply level). We obtain value $v_{1,1}=1$ from each supply-demand match. We \revision{let} $\rho=0.75$. The example is \revision{shown} in Figure~\ref{fig:counter-example}. 

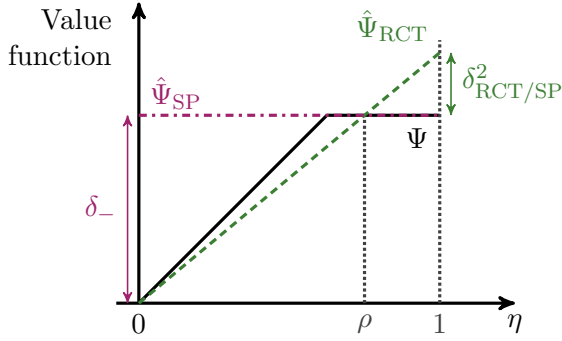
\begin{figure}
\floatbox[{\capbeside\thisfloatsetup{capbesideposition={right,top},capbesidewidth=7.2cm}}]{figure}[\FBwidth]
{\caption{{Toy example of an asymmetric experiment where the SP estimator does not reduce bias. T}he SP estimator is unbiased for global treatment but overestimates global control by $\delta_-$. {T}he RCT estimator is unbiased for global control but overestimates global treatment by $\delta^2_{\text{RCT/SP}}$. Due to the asymmetric design, $\delta^2_{\text{RCT/SP}}<\delta_-$.
}\label{fig:counter-example}}
{\begin{tikzpicture}[
    very thick,
    >=stealth'
  ]
  \node[anchor=east] at (0,6.5) {\begin{tabular}{r} Value\\function \end{tabular}};
  \coordinate [label={below:0}] (O) at (0,3);
  \draw[->] ($(O) + (-0.3,0)$) -- ($(O) + (5,0)$) coordinate[label = {below:$\eta$}] (xmax);
  \draw[->] (O) -- ($(O) + (0,4)$) coordinate[label = {left:}];
  \draw[-] ($(O) + (0,0)$) -- ($(O) + (2.5,2.5)$);
  \draw[-] ($(O) + (2.5,2.5)$) -- ($(O) + (4,2.5)$);
  \coordinate[label={left:$\Psi$}] (PSI_DEF) at ($(O) + (4,2.2)$);
  \draw[darkgray,densely dotted] ($(O) + (4,0)$) coordinate[label = {below:1}] (TREATMENT_BASE) -- ($(O) + (4,3.5)$) coordinate (TREATMENT_VALUE);
  \coordinate (EXP_VALUE) at ($(O) + (3,2.5)$);
  \draw[darkgray,densely dotted] ($(O) + (3,0)$) coordinate[label = {below:$\rho$}] (EXP_BASE) -- (EXP_VALUE);
  \coordinate (RCT_CONTROL) at ($(O) + (0, 0)$);
  \coordinate[label={[OliveGreen]above left:$\hat{\Psi}_{\text{RCT}}$}] (RCT_TREATMENT) at ($(O) + (4, 3.333)$);
  \draw[OliveGreen,densely dashed] (RCT_CONTROL) -- (RCT_TREATMENT);
  \coordinate (SP_CONTROL) at ($(O) + (0, 2.5)$);
  \coordinate (SP_TREATMENT) at ($(O) + (4, 2.5)$);
  \draw[RedViolet, dashdotted, very thick] (SP_CONTROL) -- (SP_TREATMENT);
  \coordinate[label={[RedViolet] right:$\hat{\Psi}_{\text{SP}}$}] (PSI_SP_DEF) at ($(SP_CONTROL) + (0,0.3)$);
  \coordinate (DELTA_MINUS_ARROW_BOTTOM) at ($(O) + (-0.15,0)$);
  \coordinate (DELTA_MINUS_ARROW_TOP) at ($(SP_CONTROL) + (-0.15, 0)$);
  \draw[semithick,RedViolet,<->] (DELTA_MINUS_ARROW_BOTTOM) -- (DELTA_MINUS_ARROW_TOP);
  \coordinate[label={[RedViolet]left:$\delta_-$}] (DELTA_MINUS_LABEL) at ($(DELTA_MINUS_ARROW_TOP)!0.5!(DELTA_MINUS_ARROW_BOTTOM)$);
  \coordinate (DELTA_RCTSP2_ARROW_BOTTOM) at ($(SP_TREATMENT) + (0.15,0)$);
  \coordinate (DELTA_RCTSP2_ARROW_TOP) at ($(RCT_TREATMENT) + (0.15, 0)$);
  \draw[semithick,OliveGreen,<->] (DELTA_RCTSP2_ARROW_BOTTOM) -- (DELTA_RCTSP2_ARROW_TOP);
  \coordinate[label={[OliveGreen]right:$\delta^2_{\text{RCT/SP}}$}] (DELTA_RCTSP2_LABEL) at ($(DELTA_RCTSP2_ARROW_TOP)!0.5!(DELTA_RCTSP2_ARROW_BOTTOM)$);

\end{tikzpicture}}
\end{figure}

In this example the SP estimator is unbiased for global treatment but overestimates global control. Meanwhile the RCT estimator is unbiased for global control but overestimates global treatment. Due to the asymmetric design, overestimating global control turns out to be ``worse'' than overestimating global treatment. \revision{We note that this counter-example} {requires} a significant discontinuity in the partial-treatment value function.

For a fixed treatment fraction $\rho\neq0.5$, we can construct examples where the SP estimator performs poorly. \revision{An initial mechanism to guard against such realizations is to randomize $\rho$.}

\begin{theorem}
    \label{thm:zero-bias-random}
    Consider an experiment where the treatment fraction \revision{$\rho$} is selected uniformly at random in $[0,1]$. Then the SP estimator is unbiased in expectation.
\end{theorem}

Theorem~\ref{thm:zero-bias-random} relies on the simple observation that taking the expectation of the SP estimator in the fluid limit means integrating $\hat{\Delta}_{\text{RCT}}=\bm{a}^{\rho}\cdot\bm{\beta}$ over the interval $[0,1]$, which is precisely the definition of the true global treatment effect given by Proposition~\ref{prop:fundamental-theorem-calculus}. {The same result applies in the finite-sample case using Theorem~\ref{thm:shadow-price-derivative}.} 
\revision{U}nlike Theorem~\ref{thm:main}, \revision{this result} does not rely on sign-consistency of the treatment effect $\bm\beta$. However, its \revision{usefulness} may be limited, since selecting the treatment fraction at random has other drawbacks, both theoretical ({increased} variance) and practical ({incompatible with a fixed experiment budget}).

{In practice, experimenters may wish to expose only a very small fraction $\rho \ll 0.5$ of the marketplace to the treatment, particularly if the treatment is expensive or the experiment has a finite budget. Fortunately, if the treatment effect on demand is positive, i.e., $\bm{\beta}\ge \bm{0}$, the result from Theorem~\ref{thm:main} extends to any $\rho \le 0.5$. However, if $\bm{\beta}\le\bm{0}$, and we correspondingly observe a negative value of $\hat{\Delta}_{\text{SP}}$, there is a possibility (outlined in Proposition~\ref{prop:nonsymmetric-breaking}) that the shadow price estimator is more biased than the standard estimator.}

{One way to achieve our bias reduction objective is to construct a new estimator which is a convex combination of the shadow price and standard estimators. By combining the two estimators, we can ensure that we always reduce bias as compared to the standard estimator.}

{
\begin{definition}
    The shadow-price-plus (SP+) estimator is defined as
    \[
    \hat{\Delta}_{\text{\emph{SP+}}}=(1-2\rho)\,\hat{\Delta}_{\text{\emph{RCT}}}+2\rho\,\hat{\Delta}_{\text{\emph{SP}}}.
    \]
\end{definition}
}

{
By inspecting the definition, we see that the SP+ estimator behaves more like the SP estimator when the treatment fraction $\rho$ tends to 0.5, and more like the RCT estimator when the treatment fraction $\rho$ tends to zero. This behavior allows the SP+ estimator to leverage the shadow price estimator's bias reduction without succumbing to pathological cases like the one described in Proposition~\ref{prop:nonsymmetric-breaking}. We formalize this intuition in Theorem~\ref{thm:shadow-price-plus}.

\begin{theorem}
    \label{thm:shadow-price-plus}
    When $\rho < 0.5$ and $\bm{\beta}\le \bm{0}$, the SP+ estimator reduces bias from the RCT estimator:
    \[
    \abs{\hat{\Delta}_{\text{\emph{SP+}}}-\Delta}\le \abs{\hat{\Delta}_{\text{\emph{RCT}}}-\Delta}.
    \]
\end{theorem}

Theorem~\ref{thm:shadow-price-plus} establishes that the SP+ estimator always guarantees a reduction in bias. When we observe a negative treatment effect with $\rho<0.5$, we can therefore use the SP+ estimator instead of the SP estimator. If we observe a positive treatment effect, then we can stick with the SP estimator. We can also construct a symmetric version of the SP+ estimator which guarantees bias reduction when $\rho > 0.5$ and $\bm{\beta}\ge \bm{0}$, i.e.,
\[
    \hat{\Delta}_{\text{\emph{SP+}}}=(2\rho-1)\,\hat{\Delta}_{\text{\emph{RCT}}}+2(1-\rho)\,\hat{\Delta}_{\text{\emph{SP}}}.
\]
We omit the analogous statement of Theorem~\ref{thm:shadow-price-plus} (which trivially holds by symmetry) to this case, for brevity and because exposing more than half of customers to treatment is practically unrealistic.
}

\revision{Theorems~\ref{thm:zero-bias-random} and \ref{thm:shadow-price-plus} provide provable ways to reduce bias for asymmetric experiments. However, we will see numerically that the bias of the shadow price estimator is usually much lower than the bias of the RCT estimator, even in asymmetric experiments. Indeed, the counter-example presented in Proposition~\ref{prop:nonsymmetric-breaking} is pathological --- in global control, there is no interference to speak of, while in global treatment, there is so much interference that the partial-treatment value function in Figure~\ref{fig:counter-example} is flat. In practice, the interventions that platforms experiment with are almost always marginal. It is therefore unlikely that the intervention itself is fundamentally altering the structure of interference in the marketplace. We can therefore introduce the following result guaranteeing bias reduction for any treatment fraction.

\begin{theorem}
    \label{thm:conditions-bias-reduction}
    For any treatment fraction $\rho$, we can guarantee that $\text{\emph{Bias}}_{\text{\emph{SP}}} \leq \text{\emph{Bias}}_{\text{\emph{RCT}}}$ as long as the following condition holds:
    \begin{equation}
        \label{eq:bias-reduction}
        (\bar{\bm{v}}^* - \bm{a}^0)\cdot \bm\beta \ge (\bm{a}^0 - \bm{a}^1)\cdot \bm\beta.
    \end{equation}
    Furthermore, $\text{\emph{Bias}}_{\text{\emph{SP}}} = 0$ as long as:
    \begin{equation}
        \label{eq:bias-elimination}
        (\bm{a}^0 - \bm{a}^1)\cdot \bm\beta = 0.
    \end{equation}
\end{theorem}

The first part of Theorem~\ref{thm:conditions-bias-reduction} guarantees that the shadow price estimator reduces bias as long as the left-hand side of~\eqref{eq:bias-reduction} is larger than the right-hand side. The left-hand side is large if there is significant interference in the marketplace in global control, namely, average values are larger than marginal values. The right-hand side is small if the structure of interference is similar in global control and global treatment. These conditions are often satisfied in practice: indeed, if it is the case that interference is very weak in global control and very strong in global treatment, a single experiment is unlikely to be the right tool to assess the impact of the proposed intervention. The second part of Theorem~\ref{thm:conditions-bias-reduction} also guarantees that the shadow price estimator is unbiased as long as the interference pattern is exactly the same in global control and global treatment.

Unlike Theorems~\ref{thm:zero-bias-random} and \ref{thm:shadow-price-plus}, the conditions described in Theorem~\ref{thm:conditions-bias-reduction} are not verifiable from the experiment state, as they require knowledge of the shadow prices under global treatment and global control. However, as we see in Section~\ref{sec:simulation}, they often hold in practice.

}

\section{Numerical Results}

{We now validate our theoretical results on estimator performance using real and simulated data. We show that we can improve performance as compared to the standard estimator, even when we depart from some of our theoretical assumptions.}

\subsection{Ride-hailing}

{We first consider a ride-hailing setting simulated using data from the \cite{NYCdata}, which catalogs roughly 10 million origin-destination pairs from June 2023 taxi rides in New York City. From this large dataset, we can construct simulated problem instances as follows: we independently sample $n_d$ origins, representing \emph{ride requests} and $n_s$ destinations, representing \emph{drivers} (assuming drivers remain in the area where they dropped off their last passenger). Ride requests and drivers are randomly assigned a time within 24 hours. Drivers can be matched to a request if it appears within 15 minutes of the driver appearing on the platform. Riders must be matched to drivers already on the platform. The value of matching request $i$ to driver $j$ is given by a notion of efficiency: $v_{i,j}=d^{\text{ride}}_i - d^{\text{pickup}}_{i,j}$, where $d_i^\text{ride}$ is the length of ride $i$ in miles (from the data) and $d^\text{pickup}_{i,j}$ is the (Haversine) pickup distance from driver $j$ to request $i$. We assume that the platform does not need to fulfill rides with negative efficiency. For tractability, we simplify the graph to only allow matches between a request and its 40 closest drivers.}

\begin{figure}[ht]
    \begin{subfigure}[t]{0.57\columnwidth}
        \includegraphics[width=\columnwidth]{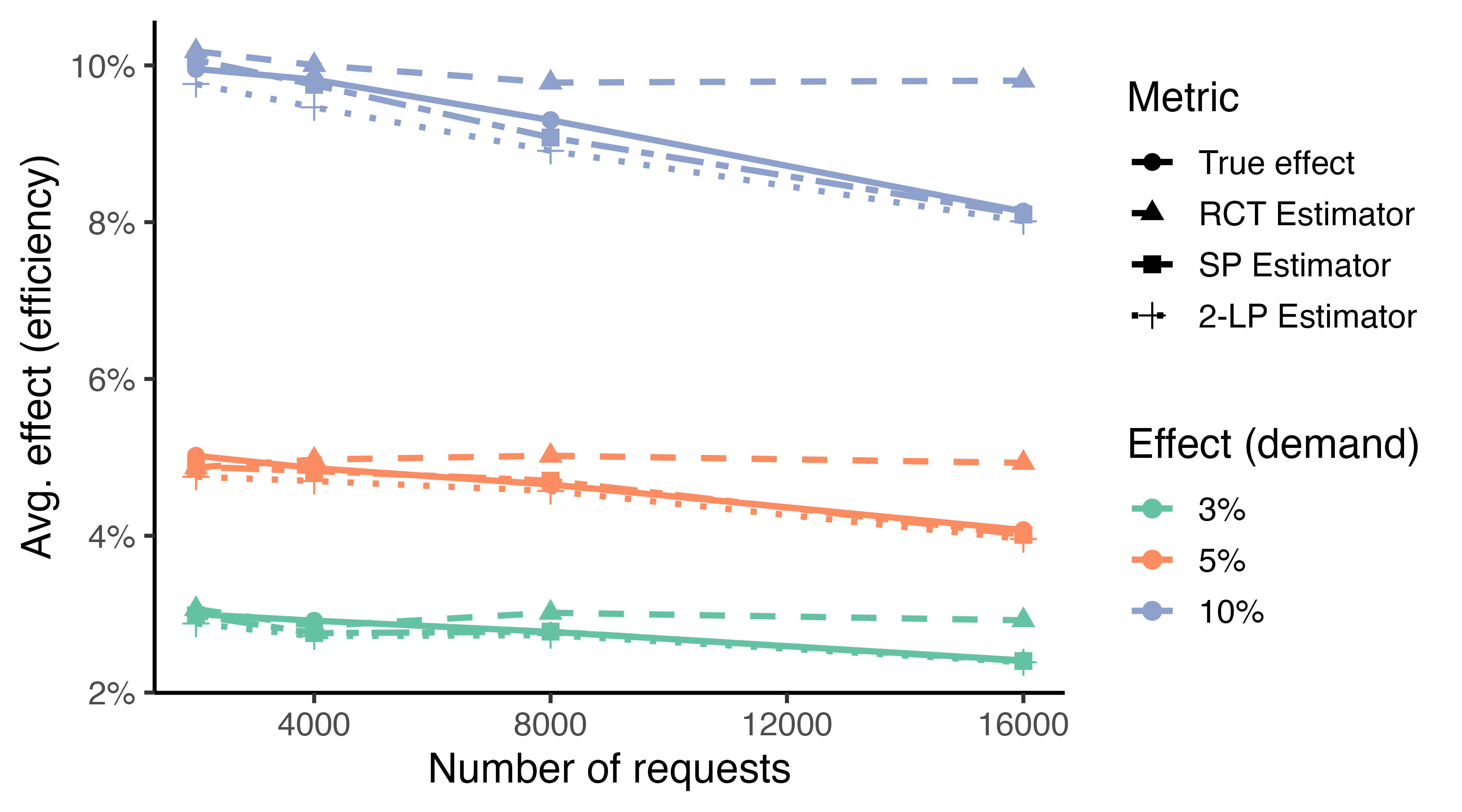}
        \caption{Mean}
        \label{fig:sim_results}
    \end{subfigure}%
    \begin{subfigure}[t]{0.42\columnwidth}
        \includegraphics[width=\columnwidth]{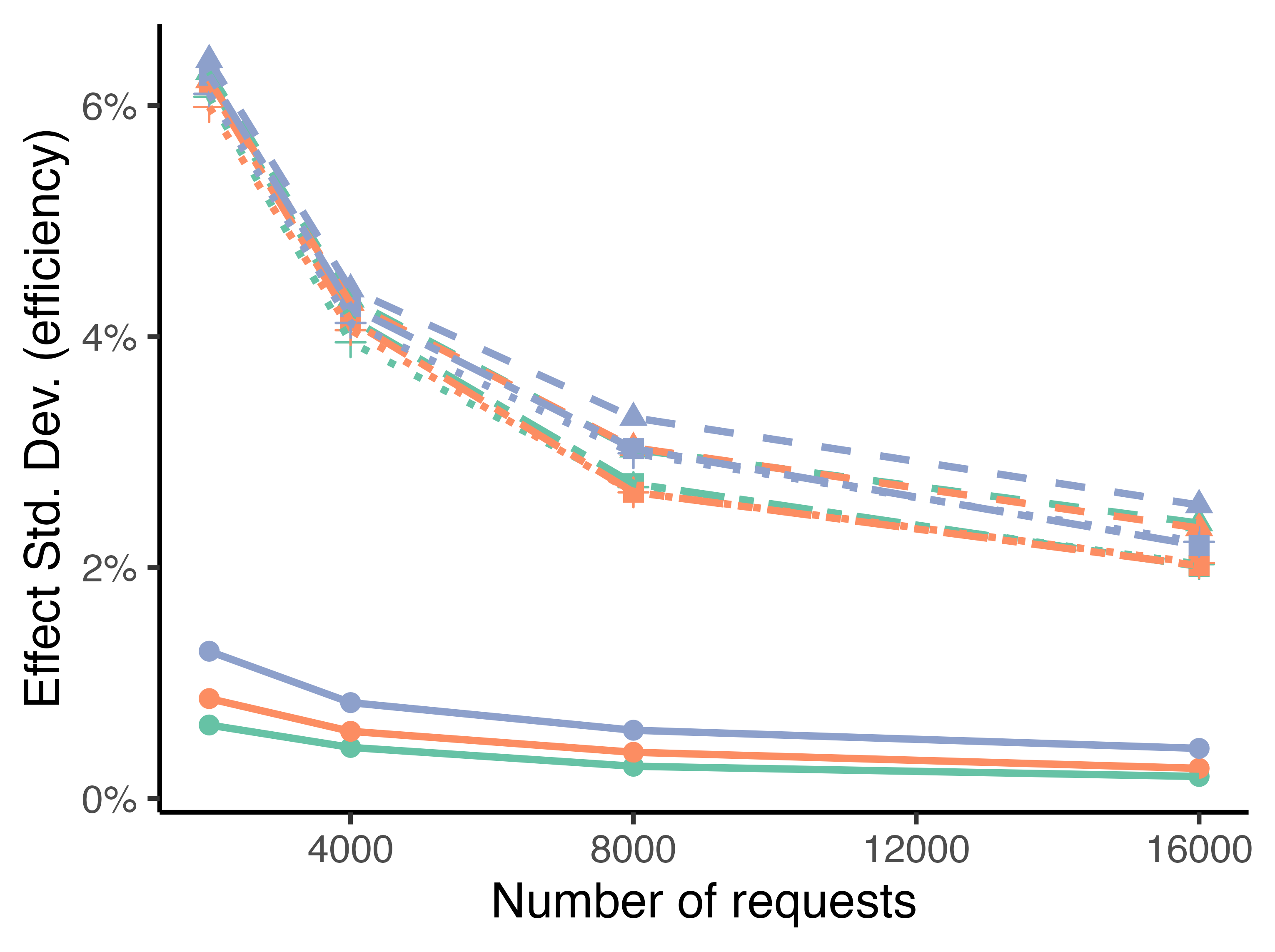}
        \caption{Standard deviation}
        \label{fig:sim_results_std}
    \end{subfigure}
    \centering
    \caption{{Estimated effect on total system value (efficiency) of treatments of different magnitudes in various supply-demand configurations.
    }
    }
\end{figure}

{This first simulation environment is designed to stretch the limits of our discrete-type assumption. Because we consider each ride request and driver to be its own distinct type (due to its unique location in space and time), there is always exactly one unit of each type. While this simulation allows us to test our method on a real dataset, one downside of this explosion of types is that it leads to primal degeneracy:} since the optimal primal solution is binary, every variable $x_{i,j}^*=1$ simultaneously satisfies both a demand and a supply capacity constraint to equality. {We discussed how to handle multiple duals in Section~\ref{ssec:finite-sample}, by choosing the ``lower sugradient'' $a_i^-$. Thanks to Theorem~\ref{thm:directional-derivative}, \revision{all these lower subgradients} can be computed efficiently by reducing the value of the right-hand side of each demand constraint by $0<\varepsilon<1/n_d$.}

\label{sec:simulation}

{We consider twelve experimental settings, with four levels of contention for supply \revision{(2000, 4000, 8000, and 16000 requests, against 16000 drivers)} and three magnitudes of the treatment effect on the arrival rate of demand (3\%, 5\%, 10\%). For each of these twelve settings, we sample the appropriate number of requests and drivers 80 times. For each sample, we randomly assign rides in equal proportions to treatment and control, then we capture the treatment effect by randomly removing an appropriate fraction of requests from the control group. We repeat this treatment-control assignment and down-sampling 20 times for each sampled supply-demand graph, and we report the average of the true global treatment effect, as well as the average values of the RCT, SP, and Two-LP estimators. We then plot the median of our 80 replications in Figure~\ref{fig:sim_results}.
}

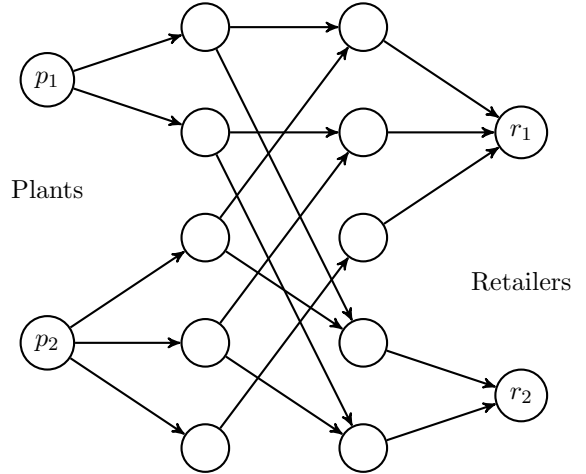
\begin{wrapfigure}{r}{0.48\textwidth}
  \begin{center}
    \begin{tikzpicture}[>=stealth',scale=0.7, every node/.style={circle,thick,draw,minimum size=20,scale=0.9}]
    \coordinate[label={above:Plants}](PLANTS) at (0, 4);
    \coordinate[label={above:Retailers}](RETAILERS) at (9, 2);
	\node (p1) at (0,7) {$p_1$};
	\node (p2) at (0,2) {$p_2$};
	\node (r1) at (9,6) {$r_1$};
	\node (r2) at (9,1) {$r_2$};
	\node (1) at (3,8) {};
	\node (2) at (3,6) {};
	\node (3) at (3,4) {};
	\node (4) at (3,2) {};
	\node (5) at (3,0) {};
	\node (6) at (6,8) {};
	\node (7) at (6,6) {};
	\node (8) at (6,4) {};
	\node (9) at (6,2) {};
	\node (10) at (6,0) {};
	\draw (p1) edge[->,thick] (1);
	\draw (p1) edge[->,thick] (2);
	\draw (p2) edge[->,thick] (3);
	\draw (p2) edge[->,thick] (4);
	\draw (p2) edge[->,thick] (5);
	\draw (6) edge[->,thick] (r1);
	\draw (7) edge[->,thick] (r1);
	\draw (8) edge[->,thick] (r1);
	\draw (9) edge[->,thick] (r2);
	\draw (10) edge[->,thick] (r2);
	\draw (1) edge[->,thick] (6);
	\draw (1) edge[->,thick] (9);
	\draw (2) edge[->,thick] (7);
	\draw (2) edge[->,thick] (10);
	\draw (3) edge[->,thick] (6);
	\draw (3) edge[->,thick] (9);
	\draw (4) edge[->,thick] (7);
	\draw (4) edge[->,thick] (10);
	\draw (5) edge[->,thick] (8);
\end{tikzpicture}%
  \end{center}
   \caption{Example of a simple supply chain network with 2 plants and 2 retailers, connected by 2 layers of 5 nodes each.}
    \label{fig:supply-chain}
\end{wrapfigure}

Our results show marketplace interference {arising} from the contention between {requests} when supply levels are low {relative to demand}. When contention is high we see that the {increase in total efficiency due to} new {requests} is smaller than the {increase in demand}, by up to 20\%. Due to interference, this effect is not captured in the standard RCT estimator which produces a biased estimate that remains stubbornly proportional to $e$; in contrast, the SP estimator {and Two-LP estimator} manage to eliminate nearly all the bias induced by network effects, {for all effect magnitudes}. 

In {Figure~\ref{fig:sim_results_std}} we show the median standard deviation of the actual treatment effect and {all estimators across all replications of a single configuration}. {As the number of drivers increases, estimator variance decreases. Furthermore, the RCT estimator typically has the most variance, and the gap with the other two estimators increases as the number of requests increases.}

\subsection{{Supply chains}}

We next consider a simple supply chain network from \cite[Chapter 9]{ahuja1988network}, displayed in Figure~\ref{fig:supply-chain}. The network has two plant nodes and two retail nodes, connected via two layers of warehouses consisting of 5 nodes each. We augment the network with capacities and costs, which are fixed for all simulations. Each edge has a maximum capacity $k_{u,w}$ sampled from a Poisson distribution with parameter 80, and a cost $c_{u,w}$ sampled uniformly from $[0,5]$. The fixed cost of production per unit is $\kappa_1=37$ in plant 1 and $\kappa_2=20$ in plant 2. The price per unit is $p_1=50$ at retailer 1 and $p_2=60$ at retailer 2. The value $v_{u,w}$ of an edge is given by $-c_{u,w}$ for all intermediate edges, $-c_{u,i}+p_{i}$ for all edges $(u,i)$ going into retailer node $i$, and $-c_{j,w}-\kappa_j$ for all edges $(j,w)$ coming out of plant node $j$.

W{e assume} the plants $p_1$ and $p_2$ have production capacities $S_1\sim\text{Poisson}(\pi_1=130)$ and $S_2\sim\text{Poisson}(\pi_2=190)$. The demand $D_1$ at retailer $r_1$ and $D_2$ at retailer $r_2$ are also Poisson random variables with parameters $\lambda_1$ and $\lambda_2$ which we will vary as part of the simulation. The treatment introduces a demand change $\beta_1$ at retailer $r_1$ and $\beta_2$ at retailer $r_2$. {In particular, we} consider two scenarios for the demand arrival rates $\bm{\lambda}$: an undersupply scenario where $\bm\lambda=(130,120)$, and an oversupply scenario where $\bm\lambda=(60,60)$. We {study five} possible treatment effects of varying magnitude and sign: $\boldsymbol{\beta}\in\{ {\footnotesize\left(10,10\right),\left(20,20\right),\left(-10,-10\right)}$, $\footnotesize{\left(-20,20\right),\left(20,-20\right)}\}.$ Readers will notice that the effects are not always sign-consistent.

\revision{W}e first compute the true global treatment effect by separately simulating the control and treatment settings with respective demand arrival rates $\bm\lambda$ and $\bm\lambda+\bm\beta$. Then we simulate an experiment with treatment fraction $\rho=0.5$ and compute the RCT{, SP, and Two-LP} estimators. {Results are shown} in Figure~\ref{fig:supply-chain}, where each vertical bar represents the average over 1000 simulations, and each group of vertical bars corresponds to a particular treatment effect on demand $\bm\beta$.

In Figure~\ref{fig:supplychain-oversupply}, we observe the result in the oversupplied regime. As predicted by the theory, the RCT estimator always overestimates the magnitude of the global treatment effect, while the SP estimator {and the Two-LP estimator} sometimes overestimate and sometimes underestimate the effect. Nevertheless, the differences between {all} the estimators are small since there is minimal contention for supply.
{In contrast}, the results in Figure~\ref{fig:supplychain-undersupply} {reveal a} pronounced {difference between the estimators, with the RCT estimator systematically overestimating the magnitude of the treatment effect by more than a factor of two. The SP estimator consistently reduces bias, even when the treatment is not sign-consistent.}

\begin{figure}[ht]
    \begin{subfigure}{0.49\columnwidth}
        \includegraphics[width=\columnwidth]{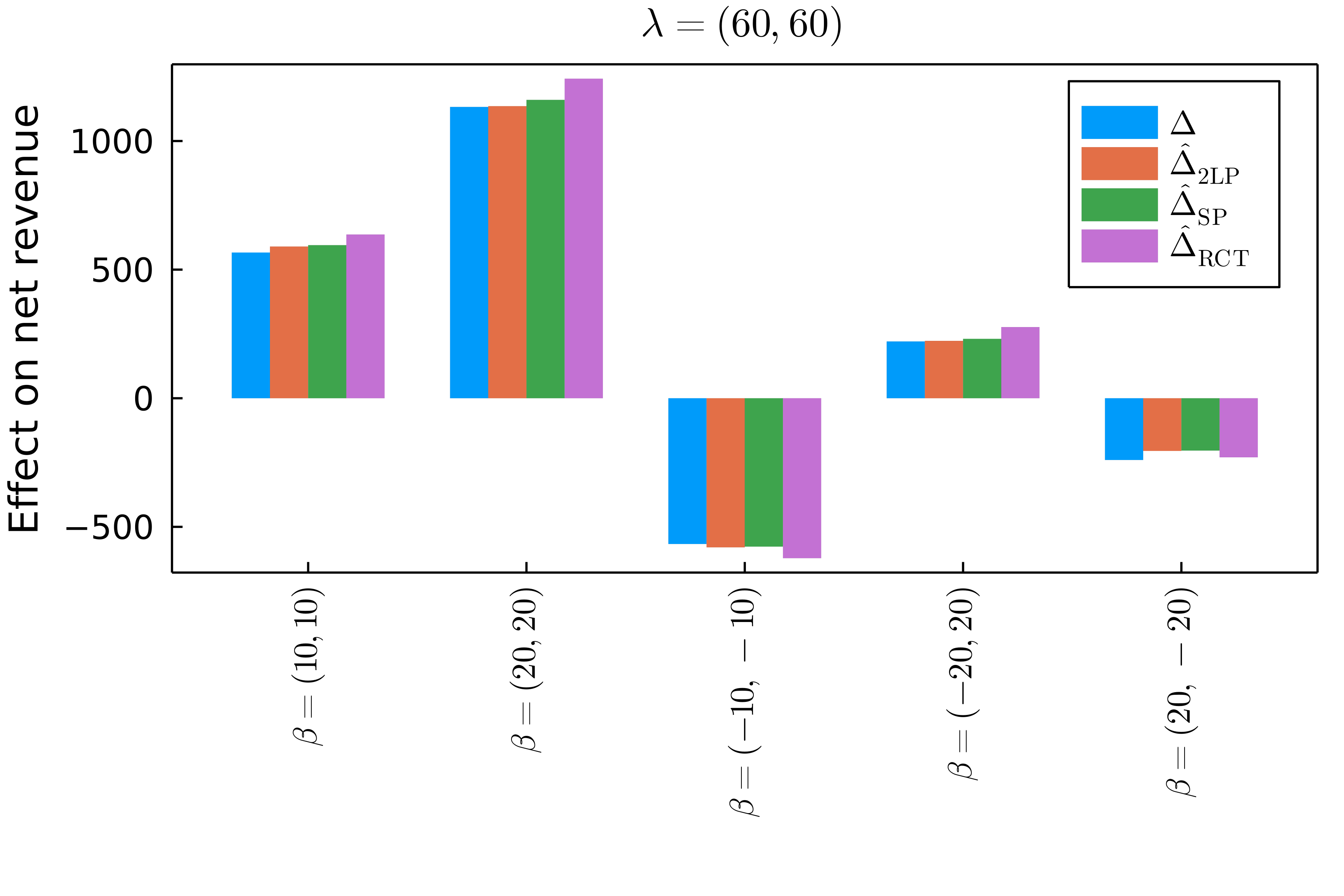}
        \caption{Oversupply}
        \label{fig:supplychain-oversupply}
    \end{subfigure}\hfill
    \begin{subfigure}{0.49\columnwidth}
        \includegraphics[width=\columnwidth]{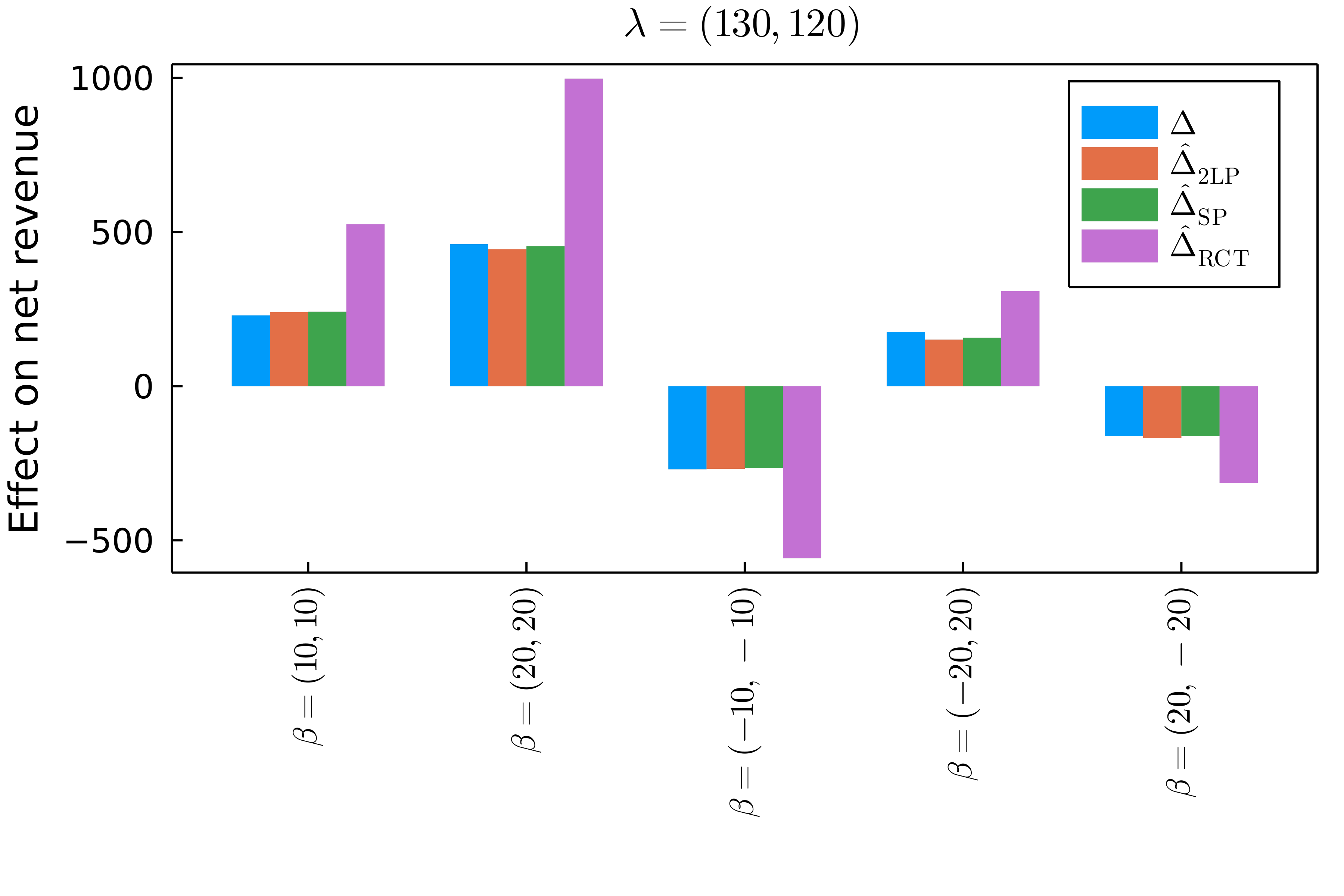}
        \caption{Undersupply}
        \label{fig:supplychain-undersupply}
    \end{subfigure}\hfill
    \caption{Supply chain simulation results comparing the true global treatment effect $\Delta$ to the RCT and SP estimators. Results are averaged over 1000 simulations.}
    \label{fig:supplychain}
\end{figure}

\subsection{{Asymmetric experiments}}

{Finally, we study the performance of our estimators in asymmetric experiments, i.e., experiments where the fraction of units assigned to the treatment group is (much) smaller than 50\%. We consider the same undersupply setting presented in Figure~\ref{fig:supplychain-undersupply} ($\bm\lambda=(130,120)$), and repeat the same analysis, but this time with $\rho=0.1$ and $\rho=0.01$. Results are shown in Figure~\ref{fig:supplychain-asymmetric}.

\begin{figure}[ht]
    \begin{subfigure}[t]{0.57\columnwidth}
        \includegraphics[width=\columnwidth]{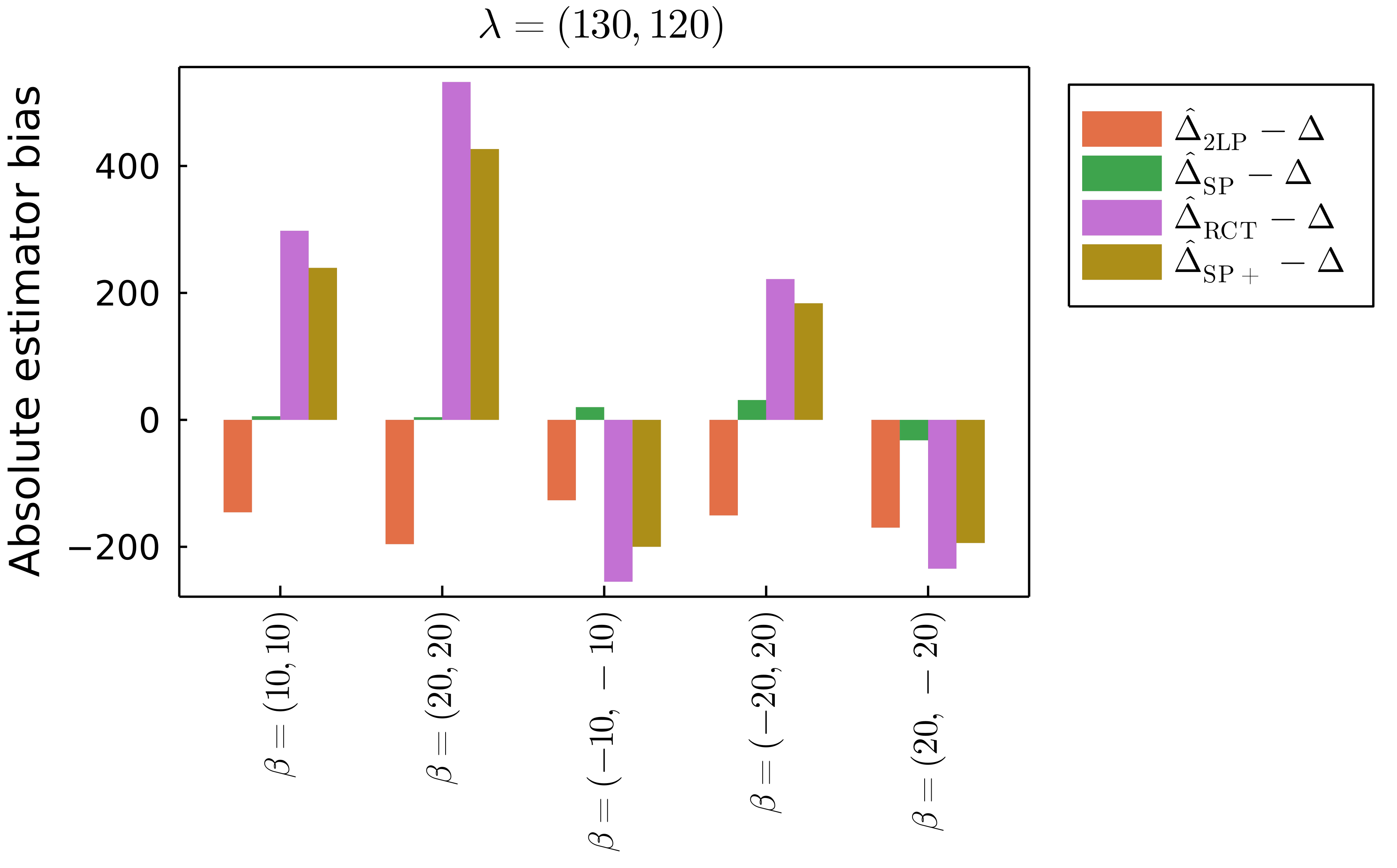}
        \caption{$\rho=0.1$}
        \label{fig:supplychain-rho10}
    \end{subfigure}\hfill
    \begin{subfigure}[t]{0.423\columnwidth}
        \includegraphics[width=\columnwidth]{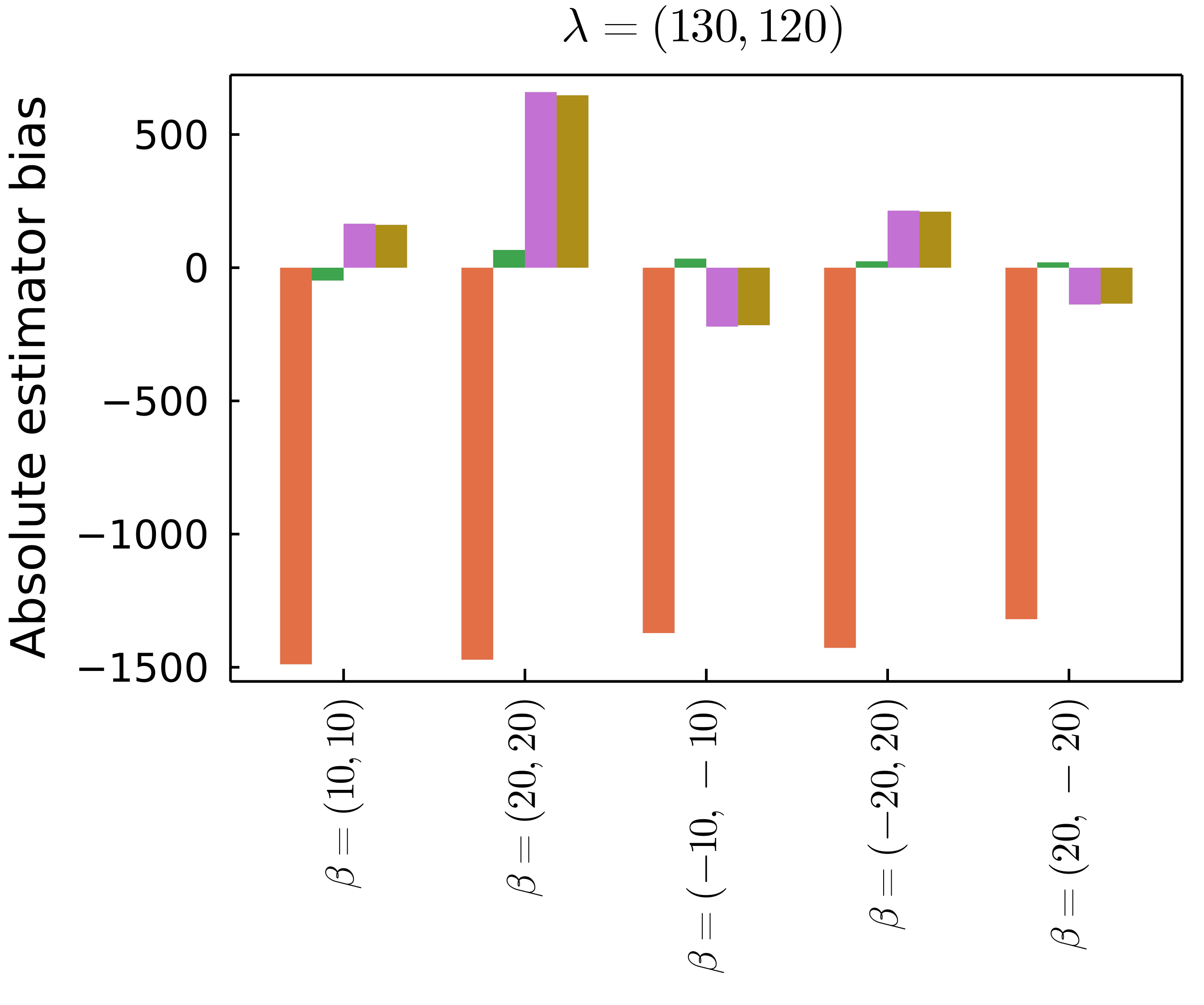}
        \caption{$\rho=0.01$}
        \label{fig:supplychain-rho1}
    \end{subfigure}\hfill
    \caption{{Supply chain simulation results showing the bias of various estimators for asymmetric experiments. Results are averaged over 1000 simulations.}}
    \label{fig:supplychain-asymmetric}
\end{figure}

First, we verify that the SP+ estimator, which we introduced to achieve provable bias reduction over the RCT estimator for any $\rho$, does accomplish this task. However, it remains significantly outperformed by the simpler SP estimator, indicating that the guarantee of bias reduction afforded by the SP+ estimator comes at a steep cost. Most strikingly, the performance of the Two-LP estimator deteriorates sharply as the treatment fraction shrinks. It appears that when the treatment group is small, we observe a similar small-sample issue as the one we analyzed in Proposition~\ref{prop:twolp-worst-case}. Figure~\ref{fig:supplychain-asymmetric} displays a setting where the shadow price estimator performs far better in practice than the theory can guarantee.

}
\section{Takeaways}\label{sec:takeaways}

Most of the literature on interference in marketplaces is built on the implicit assumption the standard mechanism to match supply and demand is consumer choice. Under this assumption, our ability to directly model the interference structure between users is limited by the fundamental challenge of accurately modeling choice dynamics. Not all marketplaces are choice-based, however; matching-based platforms that assign supply units to demand units via algorithmic matching are also common. In such marketplaces, the structure of interference is actually mostly accessible to the platform: the shadow prices associated with the supply and demand constraints of the matching LP directly measure the incremental value of additional demand and supply units. \revision{We use this idea} with the specific aim of correcting interference bias in marketplace experiments.
The matching LP shadow prices can likely prove to be a broadly applicable metric for matching-based platforms in any use case where the signal of interest is the marginal value of a unit of supply or demand.

\singlespacing
\small
\bibliography{references}
\newpage
\onehalfspacing
\appendix
\section{Reducing Bias in Secondary Metrics} \label{sec:secondary}

\subsection{Secondary Metrics}

\revision{In the main body of the paper,} we assume that the platform's estimand of interest is the global treatment effect on system value, i.e., the difference between the total system value under global treatment $\Phi_\tau(\bm{D}^{\tau,\bm\lambda+\bm\beta},\bm{S}^\tau)$ and the total system value under global control $\Phi_\tau(\bm{D}^{\tau,\bm\lambda},\bm{S}^\tau)$. In practice, marketplace companies track more than one business metric to assess their performance. For instance, a ride-hailing company might match riders to drivers primarily based on pickup distance, but may also care about fulfilling more rides using electric vehicles. An example of such a setting is presented in Figure~\ref{fig:secondary-example}.

\begin{figure}[h]
    \centering
    \includegraphics[width=0.8\columnwidth]{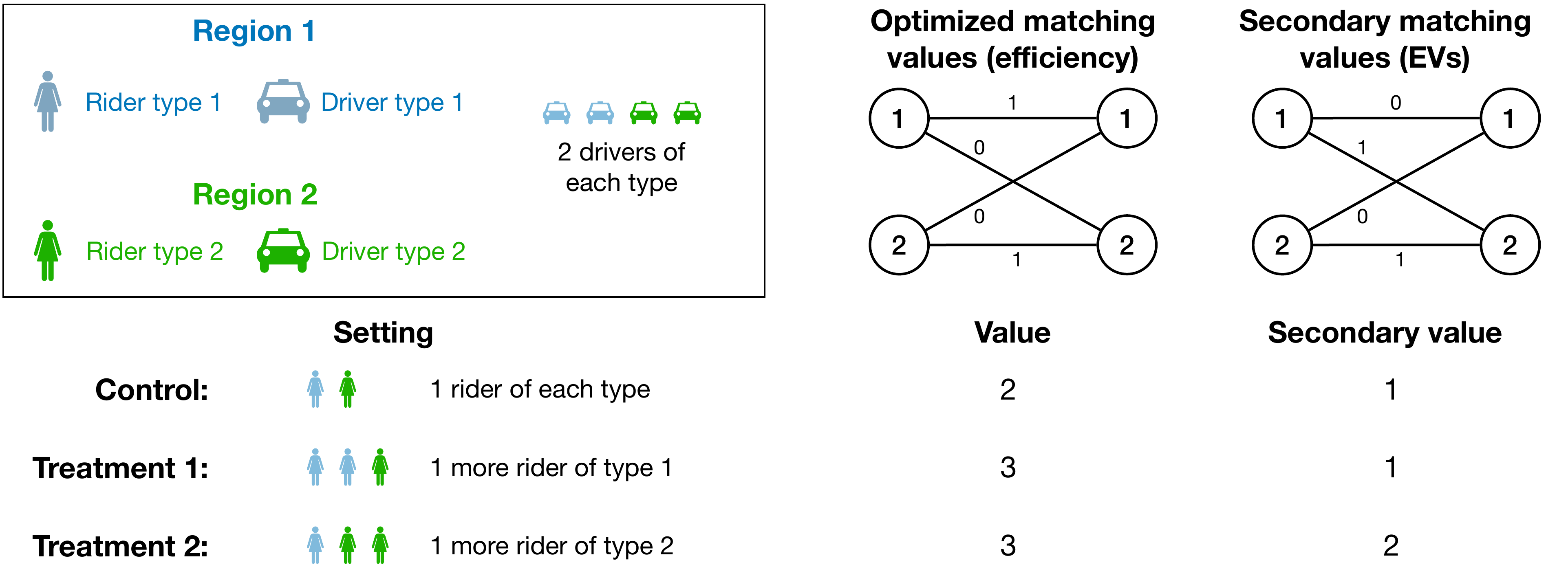}
    \caption{{Example of a secondary metric. In a ride-hailing setting, riders and drivers originate from two regions, far enough apart that it is only efficient to match riders and drivers from the same region. Drivers in Region 2 drive electric vehicles (EVs). Therefore, though both treatment 1 and treatment 2 have the same global treatment effect, treatment 2 also improves the secondary metric which counts the number of rides satisfied with electric vehicles.}}
    \label{fig:secondary-example}
\end{figure}

Even if {the total system value is computed as a weighted combination of several metrics of interest, it remains of interest to understand how each component is affected by the treatment.} For instance, in ride-hailing, a treatment that increases total welfare by {both reducing pickup distance and increasing the number of rides served by electric vehicles} may be preferable to a treatment that increases total welfare {by the same amount}, but by disproportionately {increasing the number of rides served by electric vehicles at the expense of pickup distance}.

{Though it is not the case in the simple example in Figure~\ref{fig:secondary-example}, secondary metrics can also} suffer from marketplace interference bias. {In our ride-hailing example}, a price incentive increasing demand arrival rate could lead to more riders in the treatment group matching with electric vehicles, seemingly suggesting that the price incentive improves sustainability metrics even though the total number of electric vehicle {matches} remains fixed. Therefore, it is of equal interest to reduce bias in secondary metrics as in the optimized value function.

However, solving the matching LP only provides shadow prices with respect to the original value function, as defined by the edge weights {$v_{u,w}$}. {These} shadow prices do not provide information regarding a secondary metric defined by new edge weights {$\tilde{v}_{u,w}$}. {The goal of this section is to develop a method that extends the bias-reducing properties of shadow prices to secondary metrics of interest.}

Formally, consider the matching problem defined in~\eqref{eq:matching-formulation} with edge weights $v_{u,w}$, and consider a secondary metric defined by edge weights $\tilde{v}_{u,w}$. Let $x^*_{u,w}$ denote the (assumed unique) optimal primal solution of $\Phi_\tau(\bm{d},\bm{s})$. We can define the value of the secondary metric in this setting as 
\begin{equation}
\Phi_\tau^{\bm{\tilde{v}}}(\bm d, \bm s)=\sum_{(u,w)\in\mathcal{A}}\tilde{v}_{u,w}x^*_{u,w}
\end{equation}

In our experimental setting, the estimand of interest is the \emph{secondary} treatment effect $\Delta^{\tau,\bm w}$, defined as
\begin{equation}
    \Delta^{\tau, \bm{\tilde{v}}}=\frac{1}{\tau}\mathbb{E}\left[\Phi_\tau^{\bm{\tilde{v}}}(\bm{D}^{\tau,\bm\lambda+\bm\beta}, \bm{S}^\tau)-\Phi_\tau^{\bm{\tilde{v}}}(\bm{D}^{\tau,\bm\lambda}, \bm{S}^\tau)\right].
\end{equation}

\subsection{Secondary Estimators}

{The standard RCT estimator for the secondary metric $\bm{\tilde{v}}$ is given by:}
\begin{equation}
    \hat{\Delta}^{\tau,\bm{\tilde{v}}}_{\text{{RCT}}}=\frac{1}{\tau}\left(\frac{1}{\rho}\sum_{(u,w)\in\mathcal{A}}\tilde{v}_{u,w}X_{u,w}^{\tau,\treatment}-\frac{1}{1-\rho}\sum_{(u,w)\in \mathcal{A}}\tilde{v}_{u,w}X_{u,w}^{\tau,\control}\right),
\end{equation}
{where the optimal matching values $X_{u,w}^{\tau,\treatment}$ and $X_{u,w}^{\tau,\control}$ are computed with respect to the original matching values $v_{u,w}$.}

{In Section~\ref{sec:rct-failure}, we established that the RCT estimator for the primary metric always suffers from interference bias. The same is not true for the RCT estimator for secondary metrics. Because we assume no particular structure on the {values} $\bm{\tilde{v}}$, it is possible to reverse-engineer a secondary metric that does not suffer from interference bias in any marketplace configuration.
In practice, metrics are typically not constructed in such an adverse manner, and may indeed suffer from interference bias, justifying our desire to define an analog of the shadow price estimator for secondary metrics. We first propose a way to extend the notion of a shadow price to a secondary metric.
}

\begin{definition}
Let $\Phi_\tau^{\bm{\tilde{v}}}(\bm d,\bm s)$ designate the value function for a secondary metric with edge weights $\tilde{v}_{i,j}$ for some demand vector $\bm d$ and supply vector $\bm s$. Then the secondary shadow price of demand type $i$ is given by
\[
a_i^{\bm{\tilde{v}}}=\Phi_\tau^{\bm{\tilde{v}}}\left(\mathbf{d},\mathbf{s}\right) - \Phi_\tau^{\bm{\tilde v}}\left(\mathbf{d}-\mathbf{e}_{i},\mathbf{s}\right).
\]
\end{definition}

The secondary shadow prices above are defined analogously to the true shadow prices {(see~\eqref{eq:downward-dual})}. However, unlike the true shadow prices, {it is not obvious how to obtain them directly from} the matching LP. Applying the definition directly would require solving $n_d+1$ optimization problems, which creates a much higher computational burden. However, it turns out that we can {indeed} compute these secondary duals much more efficiently by using {the classic} complementary slackness {conditions}.

{Assuming the optimal solution of the matching problem is unique and nondegenerate, the complementary slackness conditions form a system of linear equations, which uniquely specifies the optimal shadow prices given the optimal primal solution. It turns out that, by replacing each value term $v_{u,w}$ in this linear system with the corresponding \emph{secondary} value term $\tilde{v}_{u,w}$, we obtain a new linear system which uniquely specifies the \emph{secondary} shadow prices. Therefore, rather than solving $n_d$ additional optimization problems, we can simply solve one linear system. We formalize this result in the following proposition.}

\begin{proposition}
\label{prop:secondary-cs}
Assume that the optimal solution $\bm{x^*}$ to $\Phi_\tau(\bm{d},\bm{s})$ is unique and nondegenerate. Then there exists an invertible matrix $\bm{M}$ and a vector $\tilde{\bm{v}}'$ such that
\[
a_i^{\bm{\tilde{v}}} = \left(\bm{M}^{-1}\tilde{\bm{v}}'\right)_i.
\]
\end{proposition}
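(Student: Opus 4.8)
The plan is to exhibit a single invertible linear map that sends one right-hand-side vector built from the primary weights $v_{i,j}$ to the optimal dual $(\bm{a^*},\bm{b^*})$, and the analogous vector built from the secondary weights $w_{i,j}$ to the vector of secondary shadow prices. First I would put the matching LP in standard form by adding a demand slack $u_i$ to each constraint~\eqref{eq:matching-demand} and a supply slack $t_j$ to each constraint~\eqref{eq:matching-supply}, so that the constraint matrix has, as its column for $x_{i,j}$, the sum of the unit vector for demand constraint $i$ and the unit vector for supply constraint $j$; the column for $u_i$ is the unit vector for demand constraint $i$; and the column for $t_j$ is the unit vector for supply constraint $j$. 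Let $B$ be the optimal basis at $\bm{x^*}$ (unique, since the optimum is nondegenerate). Nondegeneracy forces exactly $n_d+n_s$ variables to be strictly positive, and these are precisely the basic ones; write $\mathcal{E}=\{(i,j):x^*_{i,j}>0\}$, $\mathcal{I}_d=\{i:\sum_j x^*_{i,j}<d_i\}$ and $\mathcal{I}_s=\{j:\sum_i x^*_{i,j}<s_j\}$, with $|\mathcal{E}|+|\mathcal{I}_d|+|\mathcal{I}_s|=n_d+n_s$. Complementary slackness then characterizes $(\bm{a^*},\bm{b^*})$ as the unique solution of the square system $\bm{M}(\bm{a},\bm{b})^\top=\tilde{\bm{v}}$ whose rows are $a_i+b_j=v_{i,j}$ for $(i,j)\in\mathcal{E}$, $a_i=0$ for $i\in\mathcal{I}_d$, and $b_j=0$ for $j\in\mathcal{I}_s$; here $\bm{M}=B^\top$ and $\tilde{\bm{v}}$ has entry $v_{i,j}$ on each $\mathcal{E}$-row and $0$ on each slack-row (equivalently $\tilde{\bm v}=\bm v_B$, the objective restricted to the basic variables). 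Since primal nondegeneracy makes the optimal dual unique (Theorem 5.3 of \cite{bertsimas1997introduction}), this system has a unique solution, so $\bm{M}$ is invertible.

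The main step is to show that the secondary shadow prices solve the \emph{same} system with $\tilde{\bm v}$ replaced by $\tilde{\bm{w}}$ (entry $w_{i,j}$ on $\mathcal{E}$-rows, $0$ on slack-rows). Introduce, alongside $a_i^{\bm w}$, the supply analog $b_j^{\bm w}$, the marginal secondary value of a unit of supply $j$. Because the optimum is nondegenerate, the optimal primal map equals $B^{-1}$ applied to the right-hand side on a full neighborhood of $(\bm d,\bm s)$, so $\Phi^{\bm w}(\cdot,\cdot)=\bm{w}_B^\top B^{-1}(\cdot,\cdot)$ is affine there, and in particular its partial derivatives in $d_i$ and $s_j$ exist, are locally constant, and are additive. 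I would then check three facts. (i) If $i\in\mathcal{I}_d$, a small increase in $d_i$ is absorbed entirely by the (basic) slack $u_i$ and leaves $\bm{x^*}$ unchanged, so $a_i^{\bm w}=0$; (ii) symmetrically $b_j^{\bm w}=0$ for $j\in\mathcal{I}_s$; (iii) if $(i,j)\in\mathcal{E}$, then because the column of $x_{i,j}$ is the sum of the two relevant constraint unit vectors, bumping $d_i$ and $s_j$ simultaneously by $t$ is absorbed by raising $x^*_{i,j}$ by $t$, so $\Phi^{\bm w}(\bm d+t\bm{e}_i,\bm s+t\bm{e}_j)-\Phi^{\bm w}(\bm d,\bm s)=t\,w_{i,j}$; expanding the left side as $t(a_i^{\bm w}+b_j^{\bm w})$ by additivity of the (affine) derivatives yields $a_i^{\bm w}+b_j^{\bm w}=w_{i,j}$. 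These are exactly the rows of $\bm{M}(\bm{a}^{\bm w},\bm{b}^{\bm w})^\top=\tilde{\bm{w}}$, hence $(\bm{a}^{\bm w},\bm{b}^{\bm w})=\bm{M}^{-1}\tilde{\bm{w}}$ and in particular $a_i^{\bm w}=(\bm{M}^{-1}\tilde{\bm{w}})_i$. Equivalently and more transparently, $(\bm{a^*},\bm{b^*})=(B^\top)^{-1}\bm{v}_B$ and $(\bm{a}^{\bm w},\bm{b}^{\bm w})=(B^\top)^{-1}\bm{w}_B$ are just the standard identities "optimal dual $=(B^\top)^{-1}c_B$" and "gradient of the optimal value in the right-hand side $=$ optimal dual," applied to the objectives $\bm v$ and $\bm w$.

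The step I expect to be the main obstacle is reconciling the discrete unit-step definition $a_i^{\bm w}=\Phi^{\bm w}(\bm d+\bm{e}_i,\bm s)-\Phi^{\bm w}(\bm d,\bm s)$ with the infinitesimal (directional-derivative) version used above, which requires the optimal basis to remain fixed along the whole segment from $\bm d$ to $\bm d+\bm{e}_i$; I would obtain this from nondegeneracy together with the local-linearity statement of Theorem~\ref{thm:directional-derivative} (and its analog for $\Phi^{\bm w}$, which follows once the basis is pinned down), or else simply read the statement in the fluid limit where marginal value is naturally infinitesimal, in the spirit of the remark following Corollary~\ref{cor:incremental-value}. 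Everything else is routine bookkeeping: identifying $\bm{M}$ with $B^\top$ and verifying the two right-hand-side vectors $\tilde{\bm v}=\bm v_B$ and $\tilde{\bm w}=\bm w_B$.
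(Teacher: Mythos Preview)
Your argument is correct but follows a genuinely different route from the paper's. You verify directly that $(\bm a^{\bm w},\bm b^{\bm w})$ satisfies the square complementary-slackness system $\bm M(\bm a^{\bm w},\bm b^{\bm w})^\top=\tilde{\bm w}$, by perturbing the \emph{right-hand side} and exploiting that the optimal primal is locally $B^{-1}(\cdot)$; in effect you are invoking the textbook identity ``optimal dual $=(B^\top)^{-1}c_B$'' with $c=\bm w$. The paper instead perturbs the \emph{objective}: it introduces the auxiliary LP $\Phi^\varepsilon$ with weights $v_{i,j}+\varepsilon w_{i,j}$, observes that for small $\varepsilon$ the same $\bm x^*$ is optimal so $\Phi^\varepsilon=\Phi+\varepsilon\Phi^{\bm w}$, and then uses Corollary~\ref{cor:incremental-value} twice to get $a_i^\varepsilon-a_i^*=\varepsilon a_i^{\bm w}$; subtracting the two complementary-slackness systems for $\Phi$ and $\Phi^\varepsilon$ and dividing by $\varepsilon$ yields the result. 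Your approach is more elementary and transparent (no auxiliary LP, no limiting argument in $\varepsilon$), and the simultaneous-bump trick in (iii) is a clean way to read off the $\mathcal E$-rows. The paper's approach, on the other hand, dispatches the one issue you flag---reconciling the unit-step definition of $a_i^{\bm w}$ with the infinitesimal sensitivity---by a direct appeal to Corollary~\ref{cor:incremental-value}, whereas you have to argue separately (via Theorem~\ref{thm:directional-derivative} or the fluid-limit reading) that the basis persists along the full segment to $\bm d+\bm e_i$.
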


{We provide an example of how to construct the linear system described in Proposition~\ref{prop:secondary-cs} in Figure~\ref{fig:secondary-complementary-slackness}. We first construct the linear system which uniquely determines the optimal dual solution from the optimal primal solution --- then replace every occurrence of $v_{u,w}$ with $\tilde{v}_{u,w}$. The proof of Proposition~\ref{prop:secondary-cs} in Section~\ref{sec:secondary-proofs} establishes why this construction is correct.}

\begin{figure}[h]
    \centering
    \includegraphics[width=0.9\columnwidth]{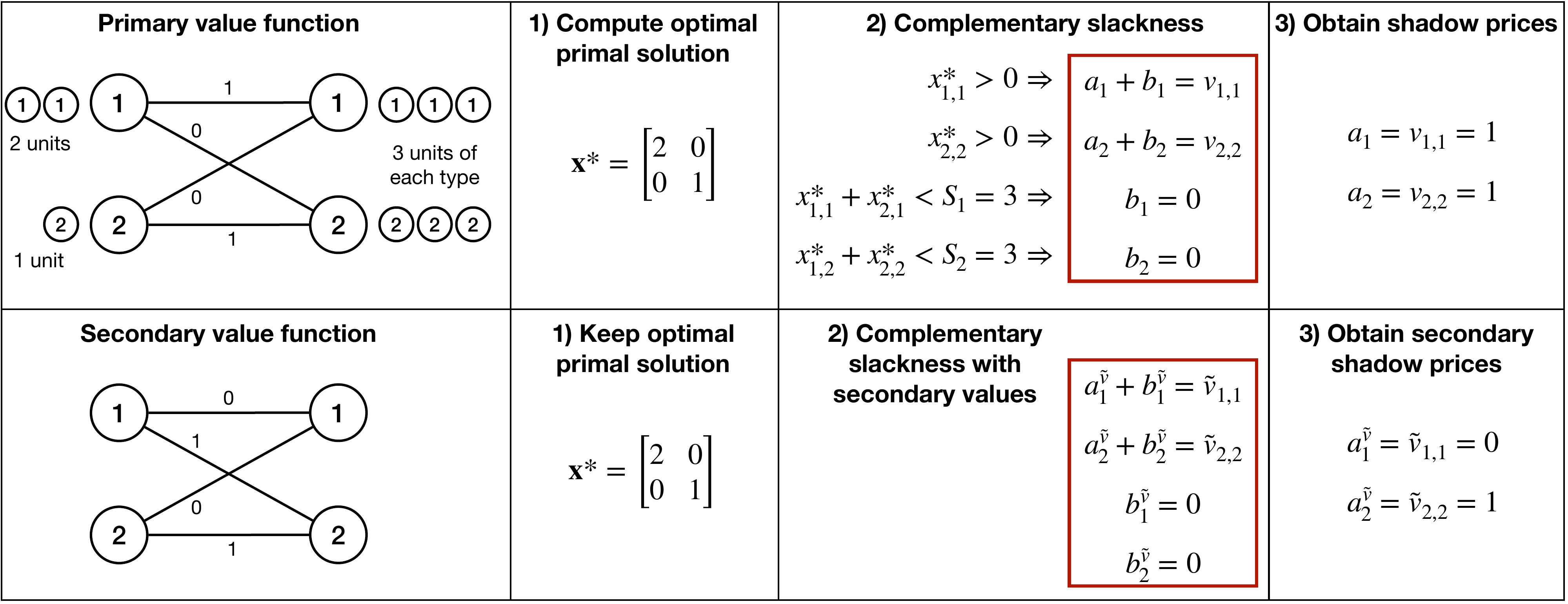}
    \caption{Example of secondary shadow price computation, using the bipartite matching graph from Figure~\ref{fig:secondary-example}. Given a particular supply and demand scenario, we can compute the optimal primal solution, then construct a linear system whose unique solution is the optimal dual solution. Replacing every occurrence of $v_{i,j}$ with $\tilde{v}_{i,j}$ yields a new linear system whose unique solution is the secondary dual solution.}
    \label{fig:secondary-complementary-slackness}
\end{figure}

{By} solving the matching problem in the experiment state with total demand $\bm{D}^{\tau,\experiment}$ and applying Proposition~\ref{prop:secondary-cs}, we can obtain the secondary shadow prices $\bm{A}^{\tau,\bm{\tilde{v}},\experiment}$. Using these marginal values, we can define a secondary version of the shadow price estimator:
\begin{equation}
    \hat{\Delta}^{\tau,\bm\tilde{v}}_{\text{SP}}=\frac{1}{\tau}\bm{A}^{\tau,\bm{\tilde{v}},\experiment}\cdot\left(\frac{1}{\rho}\bm{D}^{\tau,\treatment}-\frac{1}{1-\rho}\bm{D}^{\tau,\control}\right).
\end{equation}

\section{Analysis of bias in imbalanced markets}

So far, we have analyzed the performance of the SP estimator in a general case. In practice, marketplaces may choose to modulate their behavior depending on macroscopic characteristics such as the balance of supply and demand. It is of interest to understand estimator performance in a range of marketplace balance settings. Intuitively, we expect both estimators to perform well in the limit where supply greatly outnumbers demand; however, the naive RCT estimator is unlikely to perform well when demand outperforms supply since it does not capture the notion of contention for scarce supply units.

{For simplicity, we restrict our analysis of the undersupply and oversupply limits to the bipartite matching (uncapacitated) setting. We ignore capacities because we are trying to capture the effect of low or high supply relative to demand, and capacity introduces a third dimension that adds complexity but not insight.}

{W}e introduce scalar parameters $\bar{\lambda}$ (respectively $\bar{\pi}$) such that $\tilde{\bm{\lambda}}=\bar{\lambda}\bm{\alpha}$ with $\lVert\bm{\alpha}\rVert_1\le 1$ (respectively $\bm{\pi}=\bar{\pi}\bm{\gamma}$ with $\lVert\bm{\gamma}\rVert_1\le 1$). In other words, the total demand scales with the single parameter $\bar{\lambda}$ across all types, while the relative proportions of each type remain constant, parametrized by the vector $\bm{\alpha}$ (and similarly for supply). Recall that $\tilde{\bm{\lambda}}$ denotes the baseline arrival rate of demand to the platform; when factoring the request probability $p_i$ of demand intent of type $i$, \revision{$\bm{\lambda}=\bm{\tilde{\lambda}}\odot \bm{p}=\bar{\lambda}\bm{\alpha}\odot\bm{p}$} denotes the demand arrival rate observed by the platform under global control \revision{(where $\odot$ refers to the element-wise vector product)}; and \revision{$\bm{\beta}=\bm{\tilde{\lambda}}\odot \bm{p}=\bar{\lambda}\bm{\alpha}\odot\bm{q}$} denotes the ``extra'' demand arrival rate observed by the platform under global treatment (which could be either negative or positive). As a result, the parameter $\bar{\lambda}$ proportionally scales both the baseline demand arrival rate and the treatment effect.

Using this framework, we can study the limits $\bar{\lambda}/\bar{\pi}\to 0$ (oversupply) and $\bar{\lambda}/\bar{\pi}\to \infty$ (undersupply). We formalize our results in the following theorem. 
\begin{theorem}
    \label{thm:undersupply-oversupply}
    {Consider the bipartite matching setting where $\Phi(\cdot)=\Phi^b(\cdot)$, and a}ssume $\bm{\beta}\ge 0$.
    \begin{enumerate}[(a)]
        \item In the oversupply limit, both the RCT and SP estimators are unbiased:
        \begin{equation}
            \lim_{\bar{\lambda}/\bar{\pi}\to 0}\abs{\hat{\Delta}_{\text{\emph{SP}}}-\Delta}=0=\lim_{\bar{\lambda}/\bar{\pi}\to 0}\abs{\hat{\Delta}_{\text{\emph{RCT}}}-\Delta}.
        \end{equation}
        \item In the undersupply limit, only the SP estimator is unbiased:
        \begin{equation}
            \lim_{\bar{\lambda}/\bar{\pi}\to \infty}\abs{\hat{\Delta}_{\text{\emph{SP}}}-\Delta}=0\neq\lim_{\bar{\lambda}/\bar{\pi}\to
            \infty}\abs{\hat{\Delta}_{\text{\emph{RCT}}}-\Delta}.
        \end{equation}
    \end{enumerate}
\end{theorem}

Theorem~\ref{thm:undersupply-oversupply} is a direct consequence of the particular form of the partial-treatment value function. In the oversupply limit ($\bar{\lambda}/\bar{\pi}\to 0$), $\Psi(\cdot)$ is linear, and coincides exactly with $\hat{\Psi}_{\text{RCT}}(\eta)=\bm{\bar{v}^*}\cdot(\bm\lambda+\eta\bm\beta)$, so the RCT estimator is unbiased. However, in the undersupply limit, $\Psi(\cdot)$ is locally constant, and is not well approximated by $\hat{\Psi}_{\text{RCT}}(\cdot)$, which passes through the origin. In contrast, the SP estimator is unbiased in both limits because the partial-treatment value function is linear and the SP estimator implicitly constructs the best linear approximation in a Taylor series sense.

\section{{Analysis of estimators: proofs}}

\subsection{{Convergence to the fluid limit}}

Denoting $\nu_{\tau}$ as the density-averaged total value with density $\tau$, we seek to show that the average total value $\nu_\tau$ converges to the fluid limit $\Phi(\bm{\lambda}, \bm{\pi})$ as $\tau$ tends to infinity. {We begin with a simple scaling lemma.}

\begin{lemma}
\label{lem:scaled-problem}For any demand vector $\bm{d}\in \mathbb{Z}_+^{n_d}$, supply vector $\bm{s}\in \mathbb{Z}_+^{n_s}$, and density $\tau$, we can rewrite the total value as
\begin{equation}
\label{eq:lemma-scaled-problem}
\frac{1}{\tau}\Phi_\tau\left(\boldsymbol{d},\boldsymbol{s}\right)=\Phi\left(\frac{1}{\tau}\boldsymbol{d},\frac{1}{\tau}\boldsymbol{s}\right).
\end{equation}
Moreover, if $x^*_{u,w}$ and $z^*_{u,w}$ are the respective optimal primal solutions of
the right-hand and left-hand sides, then $z^*_{u,w}=\tau x^*_{u,w}$.
\end{lemma}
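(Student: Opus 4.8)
The plan is to exploit the positive homogeneity of the matching LP's value function. First I would observe that multiplying a vector by the positive scalar $1/\tau$ preserves both the direction of each linear inequality and the nonnegativity constraints, so the feasible region of $\Phi(\tfrac{1}{\tau}\bm{d},\tfrac{1}{\tau}\bm{s})$ is exactly $\tfrac{1}{\tau}$ times the feasible region of $\Phi(\bm{d},\bm{s})$. Concretely, the map $\bm{x}\mapsto\tfrac{1}{\tau}\bm{x}$ is a bijection from $\{\bm{x}\ge 0:\sum_j x_{i,j}\le d_i,\ \sum_i x_{i,j}\le s_j\}$ onto $\{\bm{x}\ge 0:\sum_j x_{i,j}\le d_i/\tau,\ \sum_i x_{i,j}\le s_j/\tau\}$, with inverse $\bm{x}\mapsto\tau\bm{x}$.

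Next I would note that the objective $\sum_{i,j}v_{i,j}x_{i,j}$ is linear and hence homogeneous of degree one: $\sum_{i,j}v_{i,j}(x_{i,j}/\tau)=\tfrac{1}{\tau}\sum_{i,j}v_{i,j}x_{i,j}$. Combining this with the feasibility bijection, every feasible point $\bm{x}$ of $\Phi(\bm{d},\bm{s})$ yields a feasible point $\tfrac{1}{\tau}\bm{x}$ of $\Phi(\tfrac{1}{\tau}\bm{d},\tfrac{1}{\tau}\bm{s})$ whose objective value is $\tfrac{1}{\tau}$ as large, and conversely. Taking suprema on both sides gives $\Phi(\tfrac{1}{\tau}\bm{d},\tfrac{1}{\tau}\bm{s})=\tfrac{1}{\tau}\Phi(\bm{d},\bm{s})$, which is Eq.~\eqref{eq:lemma-scaled-problem}.

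For the claim about primal solutions, the same bijection does the work: since $\bm{x}\mapsto\tau\bm{x}$ maps the feasible set of the right-hand side LP onto that of the left-hand side LP while multiplying every objective value by the single positive constant $\tau$, it carries maximizers to maximizers. Hence if $\bm{x}^*$ is an optimal solution of $\Phi(\tfrac{1}{\tau}\bm{d},\tfrac{1}{\tau}\bm{s})$, then $\tau\bm{x}^*$ is an optimal solution of $\Phi(\bm{d},\bm{s})$, i.e., $y^*_{i,j}=\tau x^*_{i,j}$; when the optimal solution is unique (cf.\ Assumption~\ref{ass:unique}) this determines it exactly.

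There is no substantial obstacle here; the only point that needs care is that $\tau$ is a positive integer, so that $1/\tau>0$ and the rescaling neither reverses any inequality nor violates the constraints $x_{i,j}\ge 0$. I would also flag explicitly that we work throughout with the LP relaxation in Eq.~\eqref{eq:matching-formulation}, so that total unimodularity and integrality play no role in this argument.
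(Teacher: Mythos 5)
Your proposal is correct and is essentially the paper's argument: the feasibility bijection $\bm{x}\mapsto\tau\bm{x}$ you describe is precisely the change of variables $y_{i,j}=\tau x_{i,j}$ that the paper applies directly inside the LP to pull the factor $1/\tau$ out of the objective, and it yields the same correspondence between optimal solutions. The only difference is presentational (you phrase it as positive homogeneity plus a bijection of feasible sets, the paper as a substitution in the LP formulation), so there is nothing to add.
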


\begin{proof}[Proof of Lemma \ref{lem:scaled-problem}.]
Following Eq.~\eqref{eq:general-matching-formulation}, we can write
{
\begin{align*}
            \Phi\left(\frac{1}{\tau}\bm{d},\frac{1}{\tau}\bm{s}\right)=&\;\max\sum_{(u,w)\in\mathcal{A}}v_{u,w}x_{u,w},\\
&\;\text{s.t. }\sum_{w:(w,u)\in\mathcal{A}}x_{w,u} \le \frac{1}{\tau}d_i  & \forall u=u_i^d\in\mathcal{U}_d,\\
&\;\phantom{\text{s.t. }}\sum_{w:(u,w)\in\mathcal{A}}x_{u,w} \le \frac{1}{\tau}s_j  & \forall u=u_j^s\in\mathcal{U}_s,\\
&\;\phantom{\text{s.t. }}\sum_{w:(u,w)\in\mathcal{A}}x_{u,w} - \sum_{w:(w,u)\in\mathcal{A}}x_{w,u} = 0  & \forall u\in\mathcal{U}_*,\\
&\;\phantom{\text{s.t. }}0\le x_{u,w} \le k_{u,w} &\forall (u,w)\in\mathcal{A}.
\end{align*}

Applying a change of variables $z_{u,w}=\tau x_{u,w}$ yields 
\begin{align*}
            \Phi\left(\frac{1}{\tau}\bm{d},\frac{1}{\tau}\bm{s}\right)=&\;\frac{1}{\tau}\max\sum_{(u,w)\in\mathcal{A}}v_{u,w}z_{u,w},\\
&\;\text{s.t. }\sum_{w:(w,u)\in\mathcal{A}}z_{w,u} \le d_i  & \forall u=u_i^d\in\mathcal{U}_d,\\
&\;\phantom{\text{s.t. }}\sum_{w:(u,w)\in\mathcal{A}}z_{u,w} \le s_j  & \forall u=u_j^s\in\mathcal{U}_s,\\
&\;\phantom{\text{s.t. }}\sum_{w:(u,w)\in\mathcal{A}}z_{u,w} - \sum_{w:(w,u)\in\mathcal{A}}z_{w,u} = 0  & \forall u\in\mathcal{U}_*,\\
&\;\phantom{\text{s.t. }}0\le z_{u,w}\le \tau k_{u,w} &\forall (u,w)\in\mathcal{A},\\
&\;=\frac{1}{\tau}\Phi_\tau\left(\bm{d},\bm{s}\right).
\end{align*}
}
\end{proof}

Lemma~\ref{lem:scaled-problem} provides an equivalent definition of the total value in the fluid limit as
\[
\nu=\lim_{\tau\rightarrow\infty}\mathbb{E}\left[\frac{1}{\tau}\Phi_\tau\left(\mathbf{D}^{\tau},\mathbf{S}^{\tau}\right)\right]=\lim_{\tau\rightarrow\infty}\mathbb{E}\left[\Phi\left(\frac{1}{\tau}\mathbf{D}^{\tau},\frac{1}{\tau}\mathbf{S}^{\tau}\right)\right].
\]

{Before proving Theorem~\ref{thm:fluid-limit}, we introduce a second useful lemma.}
\begin{lemma}
\label{lem:max-scaled-pois}For $n\in\mathbb{N}$, let $X_{n}\sim P\left(\lambda n\right)$,
and $Y_{n}=\frac{1}{n}X_{n}$ then $Z=\max_{n=1}^{\infty}Y_{n}$ has a finite mean.
\end{lemma}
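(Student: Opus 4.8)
The plan is to bound $\mathbb{E}[Z]$ directly via the layer-cake formula $\mathbb{E}[Z]=\int_0^\infty\Pr(Z>t)\,dt$ and to control the tail $\Pr(Z>t)$ for large $t$ with a Poisson large-deviation estimate. The first observation is that $\{Z>t\}=\bigcup_{n\ge1}\{Y_n>t\}$, so by the union bound $\Pr(Z>t)\le\sum_{n\ge1}\Pr(Y_n>t)=\sum_{n\ge1}\Pr(X_n\ge tn)$; note this step requires no independence among the $X_n$. I would then split the integral at $t=2\lambda$: on $[0,2\lambda]$ use the trivial bound $\Pr(Z>t)\le1$, contributing at most $2\lambda$, and do the real work on $(2\lambda,\infty)$.

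For $t>\lambda$, the Chernoff bound for a Poisson variable gives $\Pr(X_n\ge tn)\le\exp(-n\,g(t))$, where $g(t)=\lambda-t+t\ln(t/\lambda)$ is the Poisson rate function. Since $g(\lambda)=0$ and $g'(t)=\ln(t/\lambda)>0$ for $t>\lambda$, the function $g$ is strictly positive and strictly increasing on $(\lambda,\infty)$. Summing the resulting geometric series, $\Pr(Z>t)\le\sum_{n\ge1}e^{-ng(t)}=\frac{e^{-g(t)}}{1-e^{-g(t)}}$. For $t\ge2\lambda$ we have $g(t)\ge g(2\lambda)=\lambda(2\ln2-1)>0$ (because $2\ln2>1$), so $e^{-g(t)}\le c:=e^{-\lambda(2\ln2-1)}<1$ and hence $\Pr(Z>t)\le\frac{1}{1-c}e^{-g(t)}$ on this range.

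It then remains to check $\int_{2\lambda}^\infty e^{-g(t)}\,dt<\infty$. Since $g'(t)=\ln(t/\lambda)\ge\ln2$ for $t\ge2\lambda$ and $g(2\lambda)>0$, integrating gives $g(t)\ge(\ln2)(t-2\lambda)$, so $e^{-g(t)}\le2^{-(t-2\lambda)}$, which is integrable over $[2\lambda,\infty)$. Combining the pieces yields $\mathbb{E}[Z]\le2\lambda+\frac{1}{(1-c)\ln2}<\infty$, which in particular shows $Z<\infty$ almost surely.

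The only genuine obstacle is that the crudest version of this argument fails: integrating the union bound over all of $[0,\infty)$ and exchanging sum and integral produces $\sum_{n\ge1}\frac1n\mathbb{E}[X_n]=\sum_{n\ge1}\lambda=\infty$. So the exponential decay of the Poisson tail is essential, and one must be mildly careful that the geometric sum $\sum_n e^{-ng(t)}$ carries a prefactor $\tfrac{1}{1-e^{-g(t)}}$ that stays uniformly bounded — which is exactly why the cutoff is chosen to keep $t$ bounded away from $\lambda$; any fixed multiple of $\lambda$ strictly larger than $\lambda$ works, and $2\lambda$ is convenient only because $2\ln2>1$ makes the constants clean.
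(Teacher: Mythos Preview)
Your proof is correct and follows essentially the same route as the paper: layer-cake formula for $\mathbb{E}[Z]$, union bound over $n$, an exponential Poisson tail inequality to get a geometric series in $n$, and a split of the integral at a fixed multiple of $\lambda$ to keep the geometric prefactor uniformly bounded. The only cosmetic difference is the tail bound invoked: the paper cites the inequality $\Pr(V\ge t)\le e^{-t}$ for $t\ge 7\lambda_0$ from \cite{janson2000random} and cuts at $t_0=\max(7\lambda,\log 2)$, whereas you use the Chernoff rate function $g(t)=\lambda-t+t\ln(t/\lambda)$ and cut at $2\lambda$; your version is slightly more self-contained and gives a tighter cutoff, but the argument is structurally identical.
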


\begin{proof}[Proof of Lemma~\ref{lem:max-scaled-pois}.]
For any Poisson variable $V\sim P\left(\lambda_{0}\right)$ we
can bound the tail distribution as follows \citep[Corollary 2.4]{janson2000random}:
\[
P\left(V\ge t\right)\le\exp\left(-t\right)\;\;\;\forall t\ge7\lambda_{0}.
\]

Thus for all $t>t_{0}=\max\left(7\lambda,\log2\right)$
\begin{align*}
P\left(Z\ge t\right) & =P\left(\exists n\text{ s.t. }Y_{n}\ge t\right)\\
 & \le\sum_{n=1}^{\infty}P\left(Y_{n}\ge t\right)\\
 & =\sum_{n=1}^{\infty}P\left(X_{n}\ge nt\right)\\
 & \le\sum_{n=1}^{\infty}\exp\left(-nt\right)\\
 & =\exp\left(-t\right)/\left(1-\exp\left(-t\right)\right)\\
 & \le2\exp\left(-t\right)
\end{align*}
Applying this to $\mathbb{E}\left[z\right]=\int_{0}^{\infty}P\left(z\ge t\right)dt$
we see that 
\begin{align*}
\mathbb{E}\left[z\right] & =\int_{0}^{t_{0}}P\left(z>t\right)dt+\int_{t_{0}}^{\infty}P\left(z>t\right)dt\\
 & \le t_{0}+2\int_{t_{0}}^{\infty}\exp\left(-t\right)dt<\infty
\end{align*}
\end{proof}

We are now ready to prove Theorem~\ref{thm:fluid-limit}.

\begin{proof}[Proof of Theorem \ref{thm:fluid-limit}.] 

For simplicity, in this proof we denote $\bm{D}^{\tau,\bm\lambda}$ by $\bm{D}^\tau$ since we only consider a single arbitrary $\bm{\lambda}$.

\textbf{Part 1: convergence of expected total value.} We first use the strong law of large numbers, which implies that $\frac{1}{\tau}\mathbf{D}^{\tau}\rightarrow\boldsymbol{\lambda}$
and $\frac{1}{\tau}\mathbf{S}^{\tau}\rightarrow\boldsymbol{\pi}$ almost
surely. We then define
\[
Z=\max_{\tau=1}^{\infty}\frac{1}{\tau}Y_{\tau}, \text{ with } Y_{\tau}=\sum_{i=1}^{n_d}D^\tau_i,
\]
and note that $Y_{\tau}$ is a sum of Poisson random variables and is therefore Poisson-distributed with parameter $\tau\sum_{i=1}^{n_d}\lambda_i$. By Lemma~\ref{lem:max-scaled-pois} we know $Z$ has bounded mean, and furthermore we notice that
\[
\Phi\left(\frac{1}{\tau}\mathbf{D}^{\tau},\frac{1}{\tau}\mathbf{S}^{\tau}\right)<Z{\max_{i,j,p}\nu_{i,j,p}}.
\]
We finally apply the dominated convergence theorem to imply convergence of the expectation which is the desired result.

\textbf{Part 2: almost sure convergence of optimal decision variables.} For simplicity, we assume the optimal primal and dual solutions of $\Phi\left(\bm{\lambda},\bm{\pi}\right)$ are unique. If there are multiple primal (or dual) solutions, our proof below implies that every converging sub-sequence will tend to \emph{an} optimal primal (or dual) solution of the fluid limit.

\textit{Step 1: convergence of dual variables.} We first note that for every $\tau$, the optimal dual {variables $(\bm{A}^\tau,\bm{B}^\tau,\bm{M}^\tau,\bm{\Xi}^\tau)$, as well as $(\bm{a}^*,\bm{b}^*,\bm{m}^*,\bm{\xi}^*)$}, must be extreme points of the dual polyhedron~\eqref{eq:general-matching-dual}, of which there are finitely many.

Assume we can find an infinite subsequence $\tau_{1}, \tau_2, \ldots$ such that {$(\bm{A}^{\tau_i},\bm{B}^{\tau_i},\bm{M}^{\tau_i},\bm{\Xi}^{\tau_i})=(\bm{a}',\bm{b}',\bm{m}',\bm{\xi}')\neq (\bm{a}^*,\bm{b}^*,\bm{m}^*,\bm{\xi}^*)$}, a distinct extreme point of the dual polyhedron, implying that
\[
\bm{a}'\cdot \frac{\bm{D}^{\tau_i}}{\tau_i}+\bm{b}'\cdot\frac{\bm{S}^{\tau_i}}{\tau_i}+\bm{\xi}'\cdot \bm{k}<\bm{a}^*\cdot \frac{\bm{D}^{\tau_i}}{\tau_i}+\bm{b}^*\cdot\frac{\bm{S}^{\tau_i}}{\tau_i}+\bm{\xi}^*\cdot\bm{k}
\]
By the strong law of large numbers, the dual objective values of the subsequence converge almost surely to $\bm{a}'\cdot \bm{\lambda}+\bm{b}'\cdot\bm{\pi} +\bm{\xi}'\cdot \bm{k}< \bm{a}^*\cdot \bm{\lambda}+\bm{b}^*\cdot\bm{\pi} + \bm{\xi}^*\cdot \bm{k}$, which contradicts the optimality of {$(\bm{a}^*,\bm{b}^*,\bm{m}^*,\bm{\xi}^*)$ in $\Phi(\bm{\lambda}, \bm{\pi})$. Thus there exists $\tau_0 > 0$ such that $(\bm{a}^*, \bm{b}^*,\bm{m}^*,\bm{\xi}^*)$ is the} optimal dual solution of $\Phi\left(\frac{1}{\tau}\mathbf{D}^{\tau},\frac{1}{\tau}\mathbf{S}^{\tau}\right)$ for every $\tau > \tau_0$.

\textit{Step 2: convergence of primal variables.} For $\tau > \tau_0$, the optimal primal solution is defined by a system of linear equations obtained from the complementary slackness {and primal feasibility conditions:
\begin{align*}
    X_{u_j^s,u_i^d}&=0 & \forall (u_j^s,u_i^d)\in\mathcal{A}_{s\to d},~b_j^* + a_i^* > v_{u_j^s,u_i^d}\\
    X_{u_j^s,w}&=0 & \forall (u_j^s,w)\in\mathcal{A}_{s\to*},~b_j^*-m_w^* > v_{u_j^s,w}\\
    X_{w,u_i^d}&=0 & \forall (w,u_i^d)\in\mathcal{A}_{*\to d},~m_w^*+a_i^* > v_{w,u_i^d}\\
    X_{u,w}&=0 & \forall (u,w)\in\mathcal{A}_{*},~m_u^*-m_w^* > v_{u,w}\\
    X_{u,w}&=k_{u,w} & \forall (u,w)\in\mathcal{A},~\xi^*_{u,w}>0\\
    \sum_{w:(w,u)\in\mathcal{A}}X_{w,u}&=\frac{1}{\tau}D_i^\tau & \forall u=u_i^d\in\mathcal{U}_d,~a_i^* > 0\\
    \sum_{w:(u,w)\in\mathcal{A}}X_{u,w}&=\frac{1}{\tau}S_j^\tau & \forall u=u_j^s\in\mathcal{U}_s,~b_j^* > 0\\
    \sum_{w:(u,w)\in\mathcal{A}}X_{u,w} - \sum_{w:(w,u)\in\mathcal{A}}X_{w,u} &= 0  & \forall u\in\mathcal{U}_*
\end{align*}

The solution of these linear equations is continuous on the inputs
$\frac{1}{\tau}\boldsymbol{D}^{\tau}$ and $\frac{1}{\tau}\boldsymbol{S}^{\tau}$,
both of which converge strongly to $\boldsymbol{\lambda},\boldsymbol{\pi}$.
By continuity $X^{\tau}_{u,w}$ also converges strongly to $x^*_{u,w}$.}
\end{proof}

\begin{proof}[Proof of Proposition~\ref{prop:concave-integrable}.]
This result follows directly from Chapter 5 of \cite{bertsimas1997introduction}. Theorem 5.1 proves concavity, and piece-wise linearity with finitely many pieces is established in the paragraphs following Theorem 5.1.
\end{proof}

\subsection{{Analysis of the RCT estimator}}

\begin{proof}[Proof of Proposition~\ref{prop:infinite-horizon-limit-rct}.]
We assume without loss of generality that $D_i^{\tau}>0$ for each demand type $i$, otherwise we can simply remove this demand type. In the proof, we first assume for clarity that $D_i^{\tau,\treatment}>0$ and $D_i^{\tau,\control}>0$.

We can rewrite each term in Definition~\ref{def:rct-estimator} as follows:
{
\begin{align}
    \sum_{i=1}^{n_d}\sum_{j=1}^{n_s}\sum_{p=1}^{n_{i,j}}\nu_{i,j,p}Y_{i,j,p}^{\tau,\treatment}&=\sum_{i=1}^{n_d}\frac{\sum_{j=1}^{n_s}\sum_{p=1}^{n_{i,j}}Y_{i,j,p}^{\tau,\treatment}\nu_{i,j,p}}{D_i^{\tau,\treatment}}D_i^{\tau,\treatment}\nonumber\\
    &=\bar{\bm{V}}^{\tau,\treatment}\cdot \bm{D}^{\tau,\treatment},\label{eq:average-value-treatment}
\end{align}
}
and similarly
\begin{equation}
\label{eq:average-value-control}
\sum_{i=1}^{n_d}\sum_{j=1}^{n_s}{\sum_{p=1}^{n_{i,j}}\nu_{i,j,p}Y_{i,j,p}^{\tau,\control}}=\bar{\bm{V}}^{\tau,\control}\cdot \bm{D}^{\tau,\control},
\end{equation}
where $\bar{V}^{\tau,\treatment}_i$ (resp. $\bar{V}_i^{\tau,\control}$) designates the average value obtained from demand type $i$ from the treatment (resp. control) group.

Taking the expectation over treatment-control assignments preserves the total number of units in the treatment and control groups. Therefore,
\begin{align*}
    \mathbb{E}[\bar{V}_i^{\tau,\control}|\bm{D}^{\tau,\control}, \bm{D}^{\tau,\treatment},\bm{S}^\tau]&=\mathbb{E}\left[\frac{{\sum\limits_{j=1}^{n_s}\sum\limits_{p=1}^{n_{i,j}}\nu_{i,j,p}Y_{i,j,p}^{\tau,\control}}}{D_i^{\tau,\control}}|\bm{D}^{\tau,\control}, \bm{D}^{\tau,\treatment},\bm{S}^\tau\right]\\
    &=\sum_{j=1}^{n_s}{\sum_{p=1}^{n_{i,j}}}\frac{{\nu_{i,j,p}}}{D_i^{\tau,\control}}\mathbb{E}\left[{Y_{i,j,p}^{\tau,\control}}|\bm{D}^{\tau,\control}, \bm{D}^{\tau,\treatment},\bm{S}^\tau\right].
\end{align*}
By Assumption~\ref{ass:blind}, among the {$Y_{i,j,p}^{\tau}$} demand units of type $i$ matched with supply type $j$ {along path $p$}, each one is assigned to the control group with probability $D_i^{\tau,\control}/D_i^{\tau}$. Therefore we conclude
\begin{align*}
    \mathbb{E}[\bar{V}_i^{\tau,\control}|\bm{D}^{\tau,\control}, \bm{D}^{\tau,\treatment},\bm{S}^\tau]&=\sum_{j=1}^{n_s}{\sum_{p=1}^{n_{i,j}}}\frac{{\nu_{i,j,p}}}{D_i^{\tau,\control}}{Y_{i,j,p}^{\tau}}\frac{D_i^{\tau,\control}}{D_i^\tau}\\
    &={\sum_{j=1}^{n_s}\sum_{p=1}^{n_{i,j}}\frac{Y_{i,j,p}^{\tau}\nu_{i,j,p}}{D_i^{\tau}}}:=\bar{V}_i^\tau,
\end{align*}
with the same proof applying to the treatment group. So far, we have assumed that both $\bm{D}^{\tau,\treatment} > 0$ and $\bm{D}^{\tau,\control} > 0$. If we assume that one of them is $0$, then the right-hand-side in either~\eqref{eq:average-value-treatment} or \eqref{eq:average-value-control} is 0, and the rest of the proof still holds. Therefore, we can write
\[
\mathbb{E}\left[\hat{\Delta}^\tau_{\text{RCT}}~\big|~\bm{D}^{\tau,\control},\bm{D}^{\tau,\treatment},\bm{S}^\tau\right]=\bar{\bm{V}}^\tau\cdot\left(\frac{1}{\rho}\frac{1}{\tau} \bm{D}^{\tau,\treatment}-\frac{1}{1-\rho}\frac{1}{\tau}\bm{D}^{\tau,\control}\right).
\]
As $\tau\to\infty$, we already know from the strong law of large numbers that $\bm{D}^{\tau,\treatment}/\tau$ converges to $\bm{\lambda}^{\treatment}$ (and $\bm{D}^{\tau,\control}/\tau$ to $\bm{\lambda}^{\control}$). Furthermore, Theorem~\ref{thm:fluid-limit} implies that $\bar{\bm{V}}^\tau$ converges to $\bar{\bm{v}}^*$.  Once again applying Lemma~\ref{lem:max-scaled-pois} and the dominated convergence theorem, we obtain
\[
\lim_{\tau\to\infty}\mathbb{E}\left[\hat{\Delta}^\tau_{\text{RCT}}~\big|~\bm{D}^{\tau,\control},\bm{D}^{\tau,\treatment},\bm{S}^\tau\right]=\bar{\bm{v}}^*\cdot\left(\frac{1}{\rho}\rho(\bm{\lambda}+\bm{\beta})-\frac{1}{1-\rho}(1-\rho)\bm{\lambda}\right)=\bar{\bm{v}}^*\cdot\bm{\beta}.
\]
\end{proof}

Notice that the proof of Proposition~\ref{prop:infinite-horizon-limit-rct} analyzes the estimator $\mathbb{E}\left[\hat{\Delta}^\tau_{\text{RCT}}~\big|~\bm{D}^{\tau,\control},\bm{D}^{\tau,\treatment},\bm{S}^\tau\right]$ in lieu of $\hat{\Delta}^\tau_{\text{RCT}}$. Under Assumption 1, we can always compute the former estimator as a lower-variance version of the latter estimator. In the main text, we discuss the latter estimator since it is more intuitive. However, in our results, we analyze the version with variance reduction so that we are always comparing our estimator to to the best possible standard estimator (in terms of variance).

\begin{proof}[Proof of Theorem~\ref{thm:rct-overestimate}.]
For some demand vector $\bm{d}$ and supply vector $\bm s$, let {$\mathcal{Y}(\bm{d}, \bm{s})$} denote the set of feasible solutions in path-aggregated form to Eq.~\eqref{eq:general-matching-formulation} with $\tau=1$ (fluid limit).

{Assume without loss of generality that $\bm{\beta}\ge \bm{0}$.} We first note that the optimal matching value for the experiment state verifies $\Psi(\rho)=\Phi(\bm{\lambda}+\rho\bm{\beta}, \bm{\pi})=\bar{\bm{v}}^*\cdot(\bm{\lambda}+\rho\bm{\beta})$, which follows from the definition of $\bar{\bm{v}}^*$.

\textbf{Step 1:} Starting from the optimal solution to the matching problem in the experiment state $x_{i,j}^*$, we build a ``scaled'' solution $y_{i,j}$ to the matching problem under global control, such that
{
\[
z_{i,j,p}=\frac{\lambda_{i}}{\ensuremath{\lambda_{i}+\rho\beta_{i}}}y_{i,j,p}^{*}.
\]
}

We can easily verify that {$z_{i,j,p}\in \mathcal{Y}(\bm{\lambda}, \bm{\pi})$} (feasibility), since:
\[
    \sum_{j=1}^{n_s}{\sum_{p=1}^{n_{i,j}}z_{i,j,p}}=\frac{\lambda_{i}}{\ensuremath{\lambda_{i}+\rho\beta_{i}}}\sum_{j=1}^{n_s}{\sum_{p=1}^{n_{i,j}}y^*_{i,j,p}}\le\frac{\lambda_{i}}{\ensuremath{\lambda_{i}+\rho\beta_{i}}}\left(\lambda_{i}+\rho\beta_{i}\right)=\lambda_{i},
\]
and
\[
    \sum_{i=1}^{n_d}{\sum_{p=1}^{n_{i,j}}z_{i,j,p}}=\sum_{i=1}^{n_d}{\sum_{p=1}^{n_{i,j}}}\frac{\lambda_{i}}{\ensuremath{\lambda_{i}+\rho\beta_{i}}}{y_{i,j,p}^{*}}\le\sum_{i=1}^{n_d}{\sum_{p=1}^{n_{i,j}}y_{i,j,p}^*}\le\pi_{j}.
\]

{The flow constraints are satisfied by construction since scaling applies to all variables equally. The capacity constraints are also satisfied because we are only decreasing flow values.}

The scaled solution {$z_{i,j,p}$} has objective
\[
    \sum_{i=1}^{n_d}\sum_{j=1}^{n_s}{\sum_{p=1}^{n_{i,j}}\nu_{i,j,p}z_{i,j,p}}=\sum_{i=1}^{n_d}\sum_{j=1}^{n_s}{\sum_{p=1}^{n_{i,j}}\nu_{i,j,p}y^*_{i,j,p}}\frac{\lambda_i}{\lambda_i+\rho\beta_i}=\sum_{i=1}^{n_d}\lambda_i\sum_{j=1}^{n_s}{\sum_{p=1}^{n_{i,j}}}\frac{{\nu_{i,j,p}y^*_{i,j,p}}}{\lambda_i+\rho\beta_i}=\bar{\bm{v}}^*\cdot\bm{\lambda},
\]

and it is feasible in $\mathcal{X}(\bm{\lambda}, \bm{\pi})$, therefore $\bar{\bm{v}}^*\cdot\bm{\lambda}\le \Phi(\bm{\lambda},\bm{\pi})=\Psi(0)$. In other words, the linear approximation of the partial-treatment value function constructed implicitly by the RCT estimator underestimates the value of global control.

\textbf{Step 2:} From the concavity of $\Psi(\cdot)$, we can write:
\begin{align*}
    \Psi(\rho)&\ge \rho\Psi(1)+(1-\rho)\Psi(0)\\
    \bar{\bm{v}}^*\cdot(\bm{\lambda}+\rho\bm{\beta}) & \ge \rho\Psi(1) + (1-\rho)\bar{\bm{v}}^*\cdot\bm{\lambda}\\
    \bar{\bm{v}}^*\cdot(\bm{\lambda}+\bm{\beta}) &\ge \Psi(1).
\end{align*}
In other words, the RCT estimator's implicit linear approximation also overestimates the value of global treatment.

Putting the two halves of the proof together, we obtain:
\[
    \hat{\Delta}_{\text{\emph{RCT}}}=\bar{\bm{v}}^*\cdot\bm{\beta}= \bar{\bm{v}}^*\cdot(\bm{\lambda}+\bm{\beta}) - \bar{\bm{v}}^*\cdot\bm{\lambda}\ge \Psi(1)-\Psi(0) = \Delta.
\]
\end{proof}

\subsection{{Analysis of the two-LP estimator}}

{
\begin{proof}[Proof of Proposition~\ref{prop:twolp-simplified}] The two-LP estimator is defined as:
\begin{multline*}
    \hat{\Delta}^\tau_{\text{2LP}} =\Phi\left(\hat{\bm{\lambda}}(\bm{D}^{\tau,\control},\bm{D}^{\tau,\treatment} )+\hat{\bm{\beta}}(\bm{D}^{\tau,\control},\bm{D}^{\tau,\treatment} ),\hat{\bm{\pi}}(\bm{S}^{\tau,\bm{\pi}})\right)-\\\Phi\left(\hat{\bm{\lambda}}(\bm{D}^{\tau,\control},\bm{D}^{\tau,\treatment} ),\hat{\bm{\pi}}(\bm{S}^{\tau,\bm{\pi}})\right),
\end{multline*}
where $\hat{\bm{\lambda}}(\cdot)$, $\hat{\bm{\beta}}(\cdot)$ and $\hat{\bm{\pi}}(\cdot)$ designate maximum-likelihood estimators for the arrival rates $\bm{\lambda}$, $\bm{\beta}$ and $\bm{\pi}$ obtained from the observed demand and supply counts. In particular,
\begin{align*}
    D_i^{\tau,\control}\sim \text{Poisson}((1-\rho)\lambda_i)&\Rightarrow \hat{\lambda}_i=\frac{D_i^{\tau,\control}}{(1-\rho)\tau}\\
    D_i^{\tau,\treatment}\sim \text{Poisson}(\rho(\lambda_i+\beta_i))&\Rightarrow \hat{\lambda}_i+\hat{\beta}_i=\frac{D_i^{\tau,\treatment}}{\rho\tau}\\
    S_j^{\tau,\pi_j}\sim \text{Poisson}(\pi_j)&\Rightarrow \hat{\pi}_j=\frac{S_j^{\tau,\pi_j}}{\tau}.
\end{align*} 
Using the equations above, we can write:
\[
\hat{\Delta}^\tau_{\text{2LP}} =\Phi\left(\frac{1}{\rho\tau}\bm{D}^{\tau,\treatment}, \frac{1}{\tau}\bm{S}^{\tau,\bm{\pi}}\right)-\Phi\left(\frac{1}{(1-\rho)\tau}\bm{D}^{\tau,\control},\frac{1}{\tau}\bm{S}^{\tau,\bm{\pi}}\right),
\]
then apply Lemma~\ref{lem:scaled-problem} to complete the proof.
\end{proof}

\begin{proof}[Proof of Theorem~\ref{thm:twolp-unbiased}]
    The result follows directly from Proposition~\ref{prop:twolp-simplified} and Theorem~\ref{thm:fluid-limit}.
\end{proof}

\begin{proof}[Proof of Proposition~\ref{prop:twolp-worst-case}]
    Let $n_d=n_s=n \gg 1$. We assume the total demand arrival rate is $1$, evenly divided among the demand types, i.e. $\lambda_i=\frac{1}{n}$ for all $i$. We also assume the treatment effect on demand is $\beta_i=\lambda_i=\frac{1}{n}$. The arrival rate of each supply type $i$ is $\pi_i=m\gg 1$. In particular, we choose $n\ge n_0$ and $m\ge m_{n}$ such that we can use the following approximations:
    \begin{align*}
        1-e^{-m} &= 1 - o\left(\frac{1}{n^2}\right),\\
        1-e^{-\frac{1}{n}} &= \frac{1}{n} - \frac{1}{2n^2} \pm o\left(\frac{1}{n^2}\right),\\
        1-e^{-\frac{2}{n}} &= \frac{2}{n} - \frac{2}{n^2} \pm o\left(\frac{1}{n^2}\right),\\
        \revision{
        1-e^{-\frac{1-\rho}{n}}} &= \revision{\frac{1-\rho}{n} - \frac{(1-\rho)^2}{2n^2} \pm o\left(\frac{1}{n^2}\right),}\\
        \revision{
        1-e^{-\frac{2\rho}{n}}} &= \revision{\frac{2\rho}{n} - \frac{2\rho^2}{n^2} \pm o\left(\frac{1}{n^2}\right),}
    \end{align*}
    where the last \revision{four} equations come from the power series expansion of the exponential function.
    
    We assume that the matching network is bipartite (no intermediate nodes), and all edges have capacity $1$. Let $\tau=1$ to simplify notation. The value function is such that:
    \[
        v_{i,j}=\begin{cases}
            1 & \text{if } i=j,\\
            0 & \text{if } i\neq j.
        \end{cases} 
    \]
    This value function can be decomposed into the sum of the value obtained from each type. Under global control, the expected value obtained from type $i$ is equal to 1 if at least one demand unit and at least one supply unit of type $i$ arrive, and 0 otherwise. In other words,
    \begin{align*}
        \mathbb{E}\left[\Phi\left(\bm{D}^{\tau,\bm{\lambda}},\bm{S}^{\tau,\bm{\pi}}\right)\right]&=\sum_{i=1}^n \mathbb{P}\left[D_i^{\tau,\lambda_i}\ge 1\right]\mathbb{P}\left[S_i^{\tau,\pi_i}\ge 1\right]\\
        &=\sum_{i=1}^n (1-\mathbb{P}\left[D_i^{\tau,\lambda_i}= 0\right])(1-\mathbb{P}\left[S_i^{\tau,\pi_i}=0\right])\\
        &=\sum_{i=1}^n (1-e^{-\frac{1}{n}})(1-e^{-m})\\
        &=\left(1-o\left(\frac{1}{n^2}\right)\right)\sum_{i=1}^n \left(\frac{1}{n} -\frac{1}{2n^2}\pm o\left(\frac{1}{n^2}\right)\right)\\
        &=1 -\frac{1}{2n}\pm o\left(\frac{1}{n}\right),
    \end{align*}
    Similarly, for global treatment,
    \begin{align*}
        \mathbb{E}\left[\Phi\left(\bm{D}^{\tau,\bm{\lambda}+\bm{\beta}},\bm{S}^{\tau,\bm{\pi}}\right)\right]&=\sum_{i=1}^n \mathbb{P}\left[D_i^{\tau,\lambda_i+\beta_i}\ge 1\right]\mathbb{P}\left[S_i^{\tau,\pi_i}\ge 1\right]\\
        &=\sum_{i=1}^n (1-e^{-\frac{2}{n}})(1-e^{-m})\\
        &=\left(1-o\left(\frac{1}{n^2}\right)\right)\sum_{i=1}^n \left(\frac{2}{n} -\frac{2}{n^2}\pm o\left(\frac{1}{n^2}\right)\right)\\
        &=2 -\frac{2}{n}\pm o\left(\frac{1}{n}\right).
    \end{align*}
    Therefore the true global treatment effect is given by $$\Delta^\tau=1 -\frac{3}{2n}\pm o\left(\frac{1}{n}\right).$$

    Now consider an experiment with \revision{treatment fraction $\rho$}. For each demand type, we observe $D_i^{\tau,\control}\sim\text{Poisson}\left(\frac{\revision{1-\rho}}{n}\right)$, and $D_i^{\tau,\treatment}\sim\text{Poisson}\left(\frac{\revision{2\rho}}{n}\right)$. On average, the two-LP estimate for global control is therefore given by:
    \begin{align*}
        \mathbb{E}\left[\Phi\left(\frac{1}{1-\rho}\bm{D}^{\tau,\control}, \bm{S}^{\tau,\bm{\pi}}\right)\right]
        &=\sum_{i=1}^n \mathbb{P}\left[\revision{\frac{1}{1-\rho}}D_i^{\tau,\control}\ge 1\right]\mathbb{P}\left[S_i^{\tau,\pi_i}\ge 1\right]\\
        &=\sum_{i=1}^n \left(1-\mathbb{P}\left[D_i^{\tau,\control}= 0\right]\right)\left(1-\mathbb{P}\left[S_i^{\tau,\pi_i}=0\right]\right)\\
        &=\sum_{i=1}^n (1-e^{-\frac{\revision{1-\rho}}{n}})(1-e^{-m})\\
        &=\left(1-o\left(\frac{1}{n^2}\right)\right)\sum_{i=1}^n \revision{\left(\frac{1-\rho}{n} - \frac{(1-\rho)^2}{2n^2} \pm o\left(\frac{1}{n^2}\right)\right)}\\
        &=\revision{1-\rho -\frac{(1-\rho)^2}{2n}}\pm o\left(\frac{1}{n}\right).
    \end{align*}

    Similarly, the two-LP estimate for global treatment is given by:
    \begin{align*}
        \mathbb{E}\left[\Phi\left(\frac{1}{\rho}\bm{D}^{\tau,\treatment}, \bm{S}^{\tau,\bm{\pi}}\right)\right]
        &=\sum_{i=1}^n \mathbb{P}\left[\revision{\frac{1}{\rho}}D_i^{\tau,\treatment}\ge 1\right]\mathbb{P}\left[S_i^{\tau,\pi_i}\ge 1\right]\\
        &=\sum_{i=1}^n \left(1-\mathbb{P}\left[D_i^{\tau,\treatment}= 0\right]\right)\left(1-\mathbb{P}\left[S_i^{\tau,\pi_i}=0\right]\right)\\
        &=\sum_{i=1}^n (1-e^{-\frac{\revision{2\rho}}{n}})(1-e^{-m})\\
        &=\left(1-o\left(\frac{1}{n^2}\right)\right)\sum_{i=1}^n \left(\revision{\frac{2\rho}{n} - \frac{2\rho^2}{n^2} \pm o\left(\frac{1}{n^2}\right)}\right)\\
        &=\revision{2\rho -\frac{2\rho^2}{n}}\pm o\left(\frac{1}{n}\right).
    \end{align*}
    Putting together the above two results yields:
    \revision{\[
    \mathbb{E}\left[\hat{\Delta}_{\text{2LP}}\right] = 2\rho - \frac{4\rho^2}{2n} - 1 + \rho + \frac{(1-\rho)^2}{2n} \pm o\left(\frac{1}{n}\right)=(3\rho-1)\left(1-\frac{1+\rho}{2n}\right)\pm o\left(\frac{1}{n}\right).
    \]}
    \revision{
    Henceforth ignoring the $o(1/n)$ terms, and noting that
    \[
        \Delta^\tau = 1 - \frac{3}{2n} \le 1 - \frac{1+\rho}{2n} \le 1 - \frac{1}{2n} = \Delta^\tau + \frac{1}{n},
    \]
    we can bound the expected value of the two-LP estimator as follows:
    \begin{align*}
        \rho \ge \frac{1}{3} & \Rightarrow (3\rho - 1)\Delta^\tau \le \mathbb{E}\left[\hat{\Delta}_{\text{2LP}}\right]\le (3\rho - 1)\left(\Delta^\tau + \frac{1}{n}\right),\\
        \rho < \frac{1}{3} &\Rightarrow (3\rho - 1)\left(\Delta^\tau + \frac{1}{n}\right) \le \mathbb{E}\left[\hat{\Delta}_{\text{2LP}}\right]\le(3\rho - 1)\Delta^\tau.
    \end{align*}
    }
\end{proof}
}

\subsection{{Analysis of the SP estimator}}

\subsubsection{{Fluid limit}}

\begin{proof}[Proof of Proposition~\ref{prop:mdv-estimator-limit}.]
Analogously to Proposition~\ref{prop:infinite-horizon-limit-rct}, the result follows from the strong law of large numbers, Theorem~\ref{thm:fluid-limit}, Lemma~\ref{lem:max-scaled-pois}, and the dominated convergence theorem.
\end{proof}

\begin{proof}[Proof of Theorem~\ref{thm:main}.]
We first assume $\bm{\beta}\ge 0$. Recall from Proposition~\ref{prop:concave-integrable} that $\Psi(\cdot)$ is concave, implying that $\Psi'(\eta)=\bm{a}^\eta\cdot\bm{\beta}$ is decreasing in $\eta$. We can therefore separate the SP error into a positive and negative term:

\begin{align*}
\hat{\Delta}_{\text{SP}}-\Delta & =\int_{0}^{1}\left(\mathbf{a}^{\rho}-\mathbf{a}^{\eta}\right)\cdot\bm{\beta}d\eta =\underbrace{\int_{0}^{\rho}\left(\mathbf{a}^{\rho}-\mathbf{a}^{\eta}\right)\cdot\bm{\beta}d\eta}_{-\delta_{-}\le 0} +\underbrace{\int_{\rho}^{1}\left(\mathbf{a}^{\rho}-\mathbf{a}^{\eta}\right)\cdot\bm{\beta}d\eta}_{\delta_+\ge 0}
\end{align*}
We call the positive and negative parts above $\delta_{-}\ge 0$ and
$\delta_{+}\ge 0$, such that $\hat{\Delta}_{\text{SP}}-\Delta=\delta_{+}-\delta_{-}$.

Recalling step 1 of the proof of Theorem~\ref{thm:rct-overestimate}, we know that 
\[
\int_{0}^{\rho}\mathbf{a}^{\eta}\cdot\bm{\beta}d\eta=\Psi(\rho)-\Psi(0)\le \rho\hat{\Delta}_{\text{RCT}},
\]
therefore
\[
\rho\hat{\Delta}_{\text{RCT}}\ge\int_{0}^{\rho}\mathbf{a}^{\eta}\cdot\bm{\beta}d\eta=\int_{0}^{\rho}\mathbf{a}^{\rho}\cdot\bm{\beta}+\delta_{-}=\rho\mathbf{a}^{\rho}\cdot\bm{\beta}+\delta_{-},
\]
which we can re-write as
\[
\hat{\Delta}_{\text{RCT}}\ge \hat{\Delta}_{\text{SP}} + \frac{1}{\rho}\delta_-.
\]
The above result tells us that not only is the RCT estimator always larger than the SP estimator, but in fact the difference exceeds the (scaled) possible negative bias of the SP estimator.

We can then derive the following bound
\begin{align*}
\left|\hat{\Delta}_{\text{\emph{RCT}}}-\Delta\right|=\hat{\Delta}_{\text{RCT}}-\Delta & =\int_{0}^{1}\left(\hat{\Delta}_{\text{RCT}}-\mathbf{a}^{\eta}\cdot\bm{\beta}\right)d\eta\\
 & \ge0+\int_{\rho}^{1}\left(\mathbf{a}^{\rho}\cdot\bm{\beta}+\frac{1}{\rho}\delta_{-}-\mathbf{a}^{\eta}\cdot\bm{\beta}\right)d\eta\\
 & =\delta_{+}+\frac{\left(1-\rho\right)}{\rho}\delta_{-}
\end{align*}

When $\rho\le 0.5$ we obtain
\[
\left|\hat{\Delta}_{\text{RCT}}-\Delta\right|\ge\delta_{+}+\delta_{-}\ge\left|\delta_{+}-\delta_{-}\right|=\left|\hat{\Delta}_{\text{SP}}-\Delta\right|.
\]

We can repeat the proof with $\bm{\beta}\le 0$ and obtain the same result with $\rho\ge0.5$. A symmetric experiment design ($\rho=0.5$) will therefore exhibit less bias with the SP estimator than the RCT estimator as long as the treatment effect is consistent across types.
\end{proof}

\subsubsection{Variance results}

\begin{proof}[Proof of Theorem~\ref{thm:mdv-variance}.]
By the law of total variance, we can decompose the RCT estimator
variance into two components, one coming from the demand arrival rates,
and the other coming from the randomized treatment assignments.

\begin{align}
    \Var{\sqrt{\tau}\hat{\Delta}_{\text{RCT}}^{\tau}}=&\;\mathbb{E}\left[\Var{\sqrt{\tau}\hat{\Delta}_{\text{RCT}}^{\tau}|\bm{D}^{\tau,\control},\bm{D}^{\tau,\treatment}}\right]\nonumber\\
    &+\Var{\mathbb{E}\left[\sqrt{\tau}\hat{\Delta}_{\text{RCT}}^{\tau}|\bm{D}^{\tau,\control},\bm{D}^{\tau,\treatment}\right]}\nonumber\\
    \ge&\; \Var{\sqrt{\tau}\mathbb{E}\left[\hat{\Delta}_{\text{RCT}}^{\tau}|\bm{D}^{\tau,\control},\bm{D}^{\tau,\treatment}\right]}.\label{eq:total-variance}
\end{align}

From the proof of Proposition~\ref{prop:infinite-horizon-limit-rct}, we know that we can express the expectation on the right-hand side in Eq.~\eqref{eq:total-variance} as
\[
\mathbb{E}\left[\hat{\Delta}_{\text{RCT}}^{\tau}|\bm{D}^{\tau,\control},\bm{D}^{\tau,\treatment}\right]=\frac{1}{\tau}\bar{\mathbf{V}}^{\tau}\cdot\left(\frac{1}{\rho}\bm{D}^{\tau,\control}-\frac{1}{1-\rho}\bm{D}^{\tau,\treatment}\right),
\]
where $\bar{V}_i^{\tau}$ denotes the average value obtained from demand type $i$. Therefore, we can write
\[
    \lim_{\tau\to\infty}\Var{\sqrt{\tau}\hat{\Delta}_{\text{RCT}}^{\tau}}\ge
    \lim_{\tau\to\infty}\Var{\bar{\mathbf{V}}^{\tau}\cdot\frac{1}{\sqrt{\tau}}\left(\frac{1}{\rho}\bm{D}^{\tau,\control}-\frac{1}{1-\rho}\bm{D}^{\tau,\treatment}\right)}
\]
As in the proof of Proposition~\ref{prop:infinite-horizon-limit-rct}, we can show that $\bar{\bm{V}}^\tau$ converges almost surely to $\bar{\bm{v}}^*$, the vector of average values by demand type in the fluid limit. Therefore, the above is equivalent to
\begin{align*}
    \lim_{\tau\to\infty}\Var{\sqrt{\tau}\hat{\Delta}_{\text{RCT}}^{\tau}}&\ge
    \lim_{\tau\to\infty}\Var{\bar{\mathbf{v}}^{*}\cdot\frac{1}{\sqrt{\tau}}\left(\frac{1}{\rho}\bm{D}^{\tau,\control}-\frac{1}{1-\rho}\bm{D}^{\tau,\treatment}\right)}\\
    &=\lim_{\tau\to\infty}\sum_{i=1}^{n_d}\Var{\bar{v}_i^{*}\cdot\frac{1}{\sqrt{\tau}}\left(\frac{1}{\rho}{D}_i^{\tau,\control}-\frac{1}{1-\rho}{D}_i^{\tau,\treatment}\right)}\\
    &=\sum_{i=1}^{n_d}\left(\bar{v}_i^*\right)^2\lim_{\tau\to\infty}\Var{\frac{1}{\sqrt{\tau}}\left(\frac{1}{\rho}{D}_i^{\tau,\control}-\frac{1}{1-\rho}{D}_i^{\tau,\treatment}\right)}\\
    &=\sum_{i=1}^{n_d}(\bar{v}_i^*)^2 \zeta_i,
\end{align*}
where 
\[
\zeta_i:=\lim_{\tau\to\infty}\Var{\frac{1}{\sqrt{\tau}}\left(\frac{1}{\rho}{D}_i^{\tau,\control}-\frac{1}{1-\rho}{D}_i^{\tau,\treatment}\right)}\ge 0
\]
is bounded as a result of asymptotic properties of Poisson distributions.

Similarly, we can decompose the asymptotic variance of the SP estimator:
\begin{align*}
    \Var{\sqrt{\tau}\hat{\Delta}_{\text{SP}}^{\tau}}&=\mathbb{E}\left[\Var{\sqrt{\tau}\hat{\Delta}_{\text{SP}}^{\tau}|\bm{D}^{\tau,\control},\bm{D}^{\tau,\treatment}}\right]+\Var{\mathbb{E}\left[\sqrt{\tau}\hat{\Delta}_{\text{SP}}^{\tau}|\bm{D}^{\tau,\control},\bm{D}^{\tau,\treatment}\right]}\\
    &= \Var{\mathbb{E}\left[\sqrt{\tau}\hat{\Delta}_{\text{SP}}^{\tau}|\bm{D}^{\tau,\control},\bm{D}^{\tau,\treatment}\right]}.
\end{align*}
We also know that the shadow prices $\bm{A}^\tau$ converge almost surely to the shadow prices in the fluid limit $\bm{a}^*$, so we can write:
\begin{align*}
    \lim_{\tau\to\infty}\Var{\sqrt{\tau}\hat{\Delta}_{\text{SP}}^{\tau}}&=
    \lim_{\tau\to\infty}\Var{\mathbf{a}^{*}\cdot\frac{1}{\sqrt{\tau}}\left(\frac{1}{\rho}\bm{D}^{\tau,\control}-\frac{1}{1-\rho}\bm{D}^{\tau,\treatment}\right)}\\
    &=\sum_{i=1}^{n_d}\left(a_i^*\right)^2\lim_{\tau\to\infty}\Var{\frac{1}{\sqrt{\tau}}\left(\frac{1}{\rho}{D}_i^{\tau,\control}-\frac{1}{1-\rho}{D}_i^{\tau,\treatment}\right)}\\
    &=\sum_{i=1}^{n_d}(a_i^*)^2 \zeta_i.
\end{align*}

From complementary slackness, we know that the average value of demand type $i$ must be no smaller than its marginal value, i.e., $a_i^*\le \bar{v}_i^*$. Therefore we conclude $(a_i^*)^2\le(\bar{v}_i^*)^2$ for all $i$ and therefore
\[
\lim_{\tau\to\infty}\Var{\sqrt{\tau}\hat{\Delta}_{\text{RCT}}^{\tau}}\ge \lim_{\tau\to\infty}\Var{\sqrt{\tau}\hat{\Delta}_{\text{SP}}^{\tau}}.
\]

\end{proof}

\subsubsection{{Imbalanced bipartite matching markets}}

\begin{proof}[Proof of Theorem~\ref{thm:undersupply-oversupply}.] Consider a sequence $\{\bar{\lambda}^k/\bar{\pi}^k\}_{k=1}^{k=\infty}$. Let $\bm{\lambda}^k$ and $\bm{\pi}^k$ designate the demand and supply arrival rate vectors corresponding to scale parameters $\bar{\lambda}^k$ and $\bar{\pi}^k$. Similarly, let $\bm{\beta}^k$ designate the treatment effect on demand corresponding to demand scaling $\bar{\lambda}$.

\textbf{Oversupply limit:} Assume $\{\bar{\lambda}^k/\bar{\pi}^k\}_k$ converges to 0. We can find $k_0$ such that for all $k\ge k_0$,
\[
\min_{j\in[n_s]}\pi_j^k > \sum_{i=1}^{n_d}\lambda_i^k+\beta_i^k.
\]
For any $\eta\in[0,1]$, this result implies that the supply constraint~\eqref{eq:matching-supply} for each supply type $j$ in the matching problem with demand arrival rate $\bm{\lambda}+\eta\bm{\beta}$ verifies
\[
\sum_{i=1}^{n_d} x_{i,j}\le \sum_{i=1}^{n_d}\lambda_i^k+\eta\beta_i^k\le\sum_{i=1}^{n_d}\lambda_i^k+\beta_i^k<\min_{j'\in[n_s]}\pi_{j'}^k\le\pi_{j}^k.
\]
From complementary slackness, we obtain $b^\eta_j=0$ for each supply type $j$. We then observe that the optimal demand dual variable is given by $a^\eta_i=\max_{j\in[n_s]}v_{i,j}$ for each demand type $i$, for all $\eta\in[0,1]$. Furthermore, for the experiment state ($\eta=\rho$), complementary slackness means the optimal primal variables are given by
\[
x^*_{i,j}=\begin{cases}
\lambda_i^k+\rho \beta_i^k, & \text{if } j = \arg\max_{j'\in[n_s]}
v_{i,j'},\\
0, & \text{otherwise}.
\end{cases}
\]

By proposition~\ref{prop:fundamental-theorem-calculus}, we can write the true treatment effect as
\[
\Delta = \int_0^1\bm{a}^{\eta}\cdot\bm{\beta}d\eta=\int_0^1d\eta\sum_{i=1}^{n_d}\max_{j\in[n_s]}v_{i,j}\beta_i=\sum_{i=1}^{n_d}\max_{j\in[n_s]}v_{i,j}\beta_i.
\]
By proposition~\ref{prop:mdv-estimator-limit}, the SP estimator is given by
\[
\hat{\Delta}_{\text{SP}}=\bm{a}^{\rho}\cdot\bm{\beta}=\sum_{i=1}^{n_d}\max_{j\in[n_s]}v_{i,j}\beta_i=\Delta,
\]
and by proposition~\ref{prop:infinite-horizon-limit-rct}, the RCT estimator is given by
\[
\hat{\Delta}_{\text{RCT}}=\bar{\bm{v}}^*\cdot\bm{\beta}=\sum_{i=1}^{n_d}\frac{\sum_{j=1}^{n_s}x_{i,j}^*v_{i,j}}{\lambda_i^k+\rho\beta_i^k}=
\sum_{i=1}^{n_d}\max_{j\in[n_s]}v_{i,j}\beta_i=\Delta.
\]

\textbf{Undersupply limit:} Now assume $\{\bar{\lambda}^k/\bar{\pi}^k\}_k\to\infty$. We can find $k_0$ such that for all $k\ge k_0$,
\[
\min_{i\in[n_d]}\lambda_i^k > \sum_{j=1}^{n_s}\pi_j^k.
\]
Using a similar complementary slackness argument as in the first part of the proof, we can show that for any $\eta\in[0,1]$, $a_i^\eta=0$ for each demand type $i$, and additionally the optimal primal solution for the experiment state ($\eta=\rho$) can be written as
\[
\bar{v}^*_i=\frac{\sum_{j=1}^{n_s}\delta_{i,j}\pi^k_jv_{i,j}}{\lambda_i+\rho\beta_i},~\text{with}~\delta_{i,j}=\begin{cases}
1, & \text{if } i = \arg\max_{i'\in[n_d]}v_{i',j},\\
0, & \text{otherwise}.
\end{cases}
\]

Once again applying Propositions~\ref{prop:infinite-horizon-limit-rct}, \ref{prop:mdv-estimator-limit}, and \ref{prop:fundamental-theorem-calculus}, we obtain
\[
\Delta=\int_0^1\bm{a}^{\eta}\cdot\bm{\beta}d\eta=0=\bm{a}^\rho\cdot\bm{\beta}=\hat{\Delta}_{\text{SP}},
\]
and 
\begin{align*}
\hat{\Delta}_{\text{RCT}}&=\sum_{i=1}^{n_d}\bar{v}^*_i\beta_i=\sum_{i=1}^{n_d}\frac{\sum_{j=1}^{n_s}\delta_{i,j}\pi^k_jv_{i,j}}{\lambda_i+\rho\beta_i}\beta_i\\
&=\sum_{i=1}^{n_d}\frac{\sum_{j=1}^{n_s}\delta_{i,j}\gamma_j\bar{\pi}^kv_{i,j}}{\bar{\lambda}^k\alpha_ip_i+\rho\bar{\lambda}^k\alpha_iq_i}\bar{\lambda}^k\alpha_iq_i\\
&=\sum_{i=1}^{n_d}\sum_{j=1}^{n_s}\delta_{i,j}v_{i,j}\gamma_j\bar{\pi}^k\frac{q_i}{p_i+\rho q_i}.
\end{align*}
The right-hand side above does not tend to 0 in general as $\bar{\lambda}^k/\bar{\pi}^k\to\infty$. In particular, if $\bar{\pi}^k$ converges to a positive constant (and $\bar{\lambda}^k\to\infty$), then $\hat{\Delta}_{\text{RCT}}>0$ in the limit.
\end{proof}

\subsubsection{{Finite-sample analysis}}

{
The goal of this section is to build up to the proof of Theorem~\ref{thm:shadow-price-derivative}. We first prove Theorem~\ref{thm:directional-derivative}, which establishes the existence of a positive-demand and negative-demand dual or subgradient of the matching value function $\Phi_\tau(\cdot,\cdot)$.

\begin{proof}[Proof of Theorem~\ref{thm:directional-derivative}]
Let us first assume that $\bm{\epsilon}$ is a rational, and that
$\epsilon_{i}=\frac{f_{i}}{N}$ for some $N\in Z_{+}$ and $f_{i}\in[N]$.
Using Lemma \ref{lem:scaled-problem}  we rewrite
{
\[
\Phi_\tau\left(\mathbf{d},\mathbf{s}\right)=\frac{1}{N}\Phi_{N\tau}\left(N\mathbf{d},N\mathbf{s}\right)
\]
 and 
\[
\Phi_\tau\left(\mathbf{d}+\bm{\epsilon},\mathbf{s}\right)=\frac{1}{N}\Phi_{N\tau}\left(N\mathbf{d}+\mathbf{f},N\mathbf{s}\right).
\]
}

In order to get from an optimal solution {$x_{u,w}^{*}$ of $\Phi_\tau\left(\mathbf{d},\mathbf{s}\right)$
to an optimal solution of $\Phi_\tau\left(\mathbf{d}+\mathbf{e}_{k},\mathbf{s}\right)$,
for some $k\in\left[n_{d}\right]$, we know that we need to find an
augmenting path, denoted by $z_{u,w}^{k}$, such that $x_{u,w}^{*}+z^k_{u,w}$
is an optimal solution of $\Phi_\tau\left(\mathbf{d}+\mathbf{e}_{k},\mathbf{s}\right)$.} 

From diminishing returns we know that if $f_{i}>0$ then
\[
\Phi_{{N\tau}}\left(N\mathbf{d}+\mathbf{f},N\mathbf{s}\right)-\Phi_{{N\tau}}\left(N\mathbf{d}+\mathbf{f}-\mathbf{e}_{k},N\mathbf{s}\right)\le\Phi_{{N\tau}}\left(N\mathbf{d}+\mathbf{e}_{k},N\mathbf{s}\right)-\Phi_{{N\tau}}\left(N\mathbf{d},N\mathbf{s}\right).
\]

To {compute} the right-hand side, we observe that to get from the optimal
solution of $\Phi_{{N\tau}}\left(N\mathbf{d},N\mathbf{s}\right)$ to the optimal
solution of $\Phi_{{N\tau}}\left(N\mathbf{d}+\mathbf{e}_{k},N\mathbf{s}\right)$, we follow the same {augmenting path $z_{u,w}^{k}$}, thus,
\[
\Phi_{{N\tau}}\left(N\mathbf{d}+\mathbf{f},N\mathbf{s}\right)-\Phi_{{N\tau}}\left(N\mathbf{d}+\mathbf{f}-\mathbf{e}_{k},N\mathbf{s}\right)\le\Phi_{{\tau}}\left(\mathbf{d}+\mathbf{e}_{k},\mathbf{s}\right)-\Phi_{{\tau}}\left(\mathbf{d},\mathbf{s}\right).
\]
 and by diminishing returns
\[
\Phi_{{N\tau}}\left(N\mathbf{d}+\mathbf{f},N\mathbf{s}\right)-\Phi_{{N\tau}}\left(N\mathbf{d},N\mathbf{s}\right)\le\sum_{k=1}^{n_{d}}f_{k}\left(\Phi_{{\tau}}\left(\mathbf{d}+\mathbf{e}_{k},\mathbf{s}\right)-\Phi_{{\tau}}\left(\mathbf{d},\mathbf{s}\right)\right).
\]
 Using Lemma \ref{lem:scaled-problem}  again, we see that
\[
\Phi_{{\tau}}\left(\mathbf{d}+\mathbf{\bm{\epsilon}},\mathbf{s}\right)-\Phi_{{\tau}}\left(\mathbf{d},\mathbf{s}\right)\le\sum_{k=1}^{n_{d}}\epsilon_{k}\left(\Phi_{{\tau}}\left(\mathbf{d}+\mathbf{e}_{k},\mathbf{s}\right)-\Phi_{{\tau}}\left(\mathbf{d},\mathbf{s}\right)\right),
\]
{which we can also write as}
\[
\Phi_{{\tau}}\left(\mathbf{d}+\mathbf{\bm{\epsilon}},\mathbf{s}\right)\le\left(1-\sum_{k=1}^{n_{d}}\epsilon_{k}\right)\Phi_{{\tau}}\left(\mathbf{d},\mathbf{s}\right)+\sum_{k=1}^{n_{d}}\epsilon_{k}\Phi_{{\tau}}\left(\mathbf{d}+\mathbf{e}_{k},\mathbf{s}\right).
\]
To show the equality is strict we observe that the concavity of $\Phi_{{\tau}}\left(\mathbf{d},\mathbf{s}\right)$
implies that
\begin{align*}
\Phi_{{\tau}}\left(\mathbf{d}+\mathbf{\bm{\epsilon}},\mathbf{s}\right) & =\Phi_{{\tau}}\left(\mathbf{d}+\sum_{k=1}^{n_{d}}\epsilon_{k}\mathbf{e}_{k},\mathbf{s}\right)\\
 & =\Phi_{{\tau}}\left(\left(1-\sum_{k=1}^{n_{d}}\epsilon_{k}\right)\mathbf{d}+\sum_{k=1}^{n_{d}}\epsilon_{k}\left(\mathbf{d}+\mathbf{e}_{k}\right),\mathbf{s}\right)\\
 & \ge\left(1-\sum_{k=1}^{n_{d}}\epsilon_{k}\right)\Phi_{{\tau}}\left(\mathbf{d},\mathbf{s}\right)+\sum_{k=1}^{n_{d}}\epsilon_{k}\Phi_{{\tau}}\left(\mathbf{d}+\mathbf{e}_{k},\mathbf{s}\right).
\end{align*}

{Continuity of} $\Phi_{{\tau}}\left(\cdot,\cdot \right)$ implies {that the result extends to real-valued $\bm{\epsilon}$.} {A similar proof holds for $\Phi_{\tau}(\mathbf{d},\mathbf{s})-\Phi_{\tau}\left(\mathbf{d}-{\bm{\epsilon}},\mathbf{s}\right)$.}
\end{proof}

Having proven Theorem~\ref{thm:directional-derivative}, we now introduce three lemmas on the form of the derivative of a function of Poisson random variables.

\begin{lemma}
    \label{lem:derivative-poisson-rate}
    Suppose $D\sim \text{Poisson}(\lambda)$, with $0 < \lambda < \infty$, and $\bm{x}\in\mathbb{R}^m$ be a constant vector. Let $g(\bm{x},d)$ be a continuous function satisfying $0\le g(\bm{x},d) \le M \left(\abs{d} + \norm{\bm{x}}_1\right)$. Then $\mathbb{E}\left[g(\bm{x}, D)\right]$ is differentiable with respect to $\lambda$, and in particular
    \[
    \frac{\partial}{\partial\lambda}\mathbb{E}\left[g(\bm{x}, D)\right]=\mathbb{E}\left[g(\bm{x}, D+1) - g(\bm{x}, D)\right].
    \]
\end{lemma}

We note that the expectation above is taken only with respect to $D$, while keeping $\bm{x}$ constant.

\begin{proof}[Proof of Lemma~\ref{lem:derivative-poisson-rate}]
    We use a constructive argument for the proof. In particular, using the exact form of the Poisson probability mass function, we can write
    \[
        \mathbb{E}\left[g(\bm{x}, D)\right]=\sum_{d=0}^{\infty}g(\bm{x}, d)\mathbb{P}(D=d) = \sum_{d=0}^{\infty}g(\bm{x}, d)\frac{e^{-\lambda}\lambda^d}{d!}.
    \]
    We need to establish that the derivative of $\mathbb{E}\left[g(\bm{x}, D)\right]$ with respect to $\lambda$ exists. For any finite $n \ge 0$, we define the sequence of partial sums:
    \[
        f_n(\lambda) = \sum_{d=0}^ng(\bm{x},d)\frac{e^{-\lambda}\lambda^d}{d!},
    \]
    such that $\mathbb{E}\left[g(\bm{x}, D)\right] = \lim_{n\to\infty}f_n(\lambda)$. We then compute the derivative:
    \begin{align*}
        f'_n(\lambda) &= \frac{\partial}{\partial\lambda}\sum_{d=0}^ng(\bm{x},d)\frac{e^{-\lambda}\lambda^d}{d!}\\
        &=\sum_{d=0}^n\frac{g(\bm{x},d)}{d!}\frac{\partial}{\partial\lambda} \left(e^{-\lambda}\lambda^d\right)\\
        &=\sum_{d=0}^n\frac{g(\bm{x},d)}{d!}\left(d\lambda^{d-1}e^{-\lambda} - \lambda^de^{-\lambda}\right)\\
        &=\sum_{d=1}^ng(\bm{x},d)\frac{\lambda^{d-1}e^{-\lambda}}{(d-1)!} - \sum_{d=0}^ng(\bm{x},d)\frac{e^{-\lambda}\lambda^d}{d!}\\
        &=\sum_{d=0}^ng(\bm{x},d+1)\frac{\lambda^{d}e^{-\lambda}}{d!} - \sum_{d=0}^ng(\bm{x},d)\frac{e^{-\lambda}\lambda^d}{d!}\\
        &= g_n(\lambda) - f_n(\lambda),
    \end{align*}
    where we define
    \[
        g_n(\lambda) = \sum_{d=0}^ng(\bm{x},d+1)\frac{\lambda^{d}e^{-\lambda}}{d!}.
    \]

    Because $g(\bm{x}, d)\le M(\norm{x}_1+\abs{d})$, we can use the Weierstrass M-test to establish uniform convergence of $f_n$ to $\mathbb{E}\left[g(\bm{x}, D)\right]$, and of $g_n$ to $\mathbb{E}\left[g(\bm{x}, D + 1)\right]$. Therefore $f'_n$ also converges uniformly to the derivative of $\mathbb{E}\left[g(\bm{x}, D)\right]$, and we have that
    \[
    \frac{\partial}{\partial\lambda}\mathbb{E}\left[g(\bm{x}, D)\right]=\lim_{n\to\infty}(g_n(\lambda) - f_n(\lambda))=\mathbb{E}\left[g(\bm{x}, D+1) - g(\bm{x}, D)\right].
    \]
\end{proof}
}

{
\begin{lemma}
\label{lem:intermediate-derivative}
    Suppose demand $D_i$ for type $i$ is Poisson-distributed with parameter $\lambda_i$ and supply $S_j$ for type $j$ is Poisson distributed with parameter $\pi_j$. If $\lambda_i > 0$, we can write
    \[
    \frac{\partial}{\partial\lambda_i}\mathbb{E}\left[\Phi(\bm{D},\bm{S})\right]=\mathbb{E}\left[A_i^+\right],
    \]
    where $A_i^+$ is the positive-demand dual or subgradient associated with the optimal solution $\Phi(\bm{D},\bm{S})$, as defined in Theorem~\ref{thm:directional-derivative}.
\end{lemma}

\begin{proof}[Proof of Lemma~\ref{lem:intermediate-derivative}]
    Let $i=1$ without loss of generality. By the law of total expectation, we have
    \[
    \mathbb{E}\left[\Phi(\bm{D},\bm{S})\right]=\mathbb{E}_{D_1}\left[\mathbb{E}_{\bm{D}_{2:n_d},\bm{S}}\left[\Phi(\bm{D},\bm{S})|D_1\right]\right].
    \]
    Setting $g(\bm{x},d)=\mathbb{E}_{\bm{D}_{2:n_d},\bm{S}}\left[\Phi(\bm{D},\bm{S})|D_1=d\right]$, we can apply Lemma~\ref{lem:derivative-poisson-rate} to obtain
    \begin{align*}
        \frac{\partial}{\partial\lambda_1}\mathbb{E}\left[\Phi(\bm{D},\bm{S})\right] &= \mathbb{E}_{D}\left[\mathbb{E}_{\bm{D}_{2:n_d},\bm{S}}\left[\Phi(\bm{D},\bm{S})|D_1=D+1\right]-\mathbb{E}_{\bm{D}_{2:n_d},\bm{S}}\left[\Phi(\bm{D},\bm{S})|D_1=D\right]\right]\\
        &=\mathbb{E}_{D_1}\left[\mathbb{E}_{\bm{D}_{2:n_d},\bm{S}}\left[\Phi(\bm{D}+\bm{e}_1,\bm{S})|D_1\right]\right]-\mathbb{E}_{D_1}\left[\mathbb{E}_{\bm{D}_{2:n_d},\bm{S}}\left[\Phi(\bm{D},\bm{S})|D_1\right]\right]\\
        &=\mathbb{E}\left[\Phi(\bm{D}+\bm{e}_1,\bm{S})-\Phi(\bm{D},\bm{S})\right]\\
        &=\mathbb{E}\left[A_1^+\right].
    \end{align*}
\end{proof}

\begin{lemma}
    \label{lem:poisson-multiply-rate}
    Let $D$ be a Poisson-distributed random variable with parameter $\lambda$, and let the function $f(\cdot)$ verify $\mathbb{E}[f(D)]<\infty$. Then we can write
    \[
    \mathbb{E}[Df(D)]=\lambda\mathbb{E}\left[f(D+1)\right].
    \]
\end{lemma}

\begin{proof}[Proof of Lemma~\ref{lem:poisson-multiply-rate}]
    We can write out the right-hand side explicitly using the Poisson mass function:
    \begin{align*}
        \mathbb{E}\left[Df(D)\right] & = \sum_{d=0}^\infty df(d)\frac{e^{-\lambda}\lambda^d}{d!}\\
        &=\sum_{d=1}^\infty df(d)\frac{e^{-\lambda}\lambda^d}{d!}\\
        &=\lambda\sum_{d=1}^\infty f(d)\frac{e^{-\lambda}\lambda^{d-1}}{(d-1)!}.
    \end{align*}
    We can set $d'=d-1$ (equivalently $d=d'+1$) and obtain
    \[
        \mathbb{E}\left[Df(D)\right] = \lambda \sum_{d'=0}^\infty f(d'+1)\frac{e^{-\lambda}\lambda^{d'}}{d'!} = \lambda\mathbb{E}\left[f(D+1)\right].
    \]
\end{proof}

We are now ready to prove Theorem~\ref{thm:shadow-price-derivative}.

\begin{proof}[Proof of Theorem~\ref{thm:shadow-price-derivative}]
    \textbf{Step 1:} Explicit form of the derivative (right-hand side). Using the chain rule, we can write
    \[
        \Psi'_\tau(\eta) = \frac{1}{\tau}\frac{d}{d\eta} (\bm{\lambda}+\eta\bm{\beta})\cdot \nabla_{\bm{\lambda}+\eta\bm{\beta}}\mathbb{E}\left[\Phi\left(\bm{D}^{\tau,\bm{\lambda}+\eta\bm{\beta}},\bm{S}^{\tau,\bm{\pi}}\right)\right],
    \]
    where both terms on the right-hand side are vectors of length $n_d$. We can then apply Lemma~\ref{lem:intermediate-derivative} to obtain
    \[
        \Psi'_\tau(\eta) = \frac{1}{\tau}\bm{\beta}\cdot \mathbb{E}\left[\bm{A}^+\right].
    \]

    \textbf{Step 2:} Explicit form of the shadow price estimator in expectation (left-hand side). Recall that the shadow price estimator is given by:
    \[
    \hat{\Delta}^\tau_{\text{SP}}=\frac{1}{\tau}\bm{A}^{\tau,\experiment,-}\cdot \left(\frac{1}{\rho}\bm{D}^{\tau, \rho(\bm{\lambda}+\bm{\beta})}-\frac{1}{1-\rho}\bm{D}^{\tau,(1-\rho)\bm{\lambda}}\right).
    \]
    To simplify notation, we denote the conditional expectation on supply of any random variable $\bm{Z}$ by $\mathbb{E}_{|\bm{S}}\left[\bm{Z}\right]=\mathbb{E}[\bm{Z}|\bm{S}^{\tau,\bm{\pi}}]$. We further notice that after conditioning on supply, the shadow price only depends on total demand, and therefore simplify notation to $A_i^{\tau,\experiment,-}=A_i^{-}(\bm{D}^{\tau,\bm{\lambda}+\rho\bm{\beta}})$.
    Taking the conditional expectation on supply yields
    \begin{align}
        \mathbb{E}_{|\bm{S}}\left[\hat{\Delta}_{\text{SP}}^\tau\right] &= \frac{1}{\tau}\mathbb{E}_{|\bm{S}}\left[\sum_{i=1}^{n_d}A_i^-(\bm{D}^{\tau,\bm{\lambda}+\rho\bm{\beta}})\left(\frac{1}{\rho}D_i^{\tau,\rho(\lambda_i+\beta_i)}-\frac{1}{1-\rho}D_i^{\tau,(1-\rho)\lambda_i}\right)\right]\nonumber\\
        &=\frac{1}{\tau}\sum_{i=1}^{n_d}\mathbb{E}_{|\bm{S}}\left[A_i^-(\bm{D}^{\tau,\bm{\lambda}+\rho\bm{\beta}})\left(\frac{1}{\rho}D_i^{\tau,\rho(\lambda_i+\beta_i)}-\frac{1}{1-\rho}D_i^{\tau,(1-\rho)\lambda_i}\right)\right]\nonumber\\
        &=\frac{1}{\tau}\sum_{i=1}^{n_d}\mathbb{E}_{|\bm{S}}\left[\mathbb{E}_{|\bm{S}}\left[A_i^-(\bm{D}^{\tau,\bm{\lambda}+\rho\bm{\beta}})\left(\frac{1}{\rho}D_i^{\tau,\rho(\lambda_i+\beta_i)}-\frac{1}{1-\rho}D_i^{\tau,(1-\rho)\lambda_i}\right)\bigg|\bm{D}^{\tau,\bm{\lambda}+\rho\bm{\beta}}\right]\right]\nonumber\\
        &=\frac{1}{\tau}\sum_{i=1}^{n_d}\mathbb{E}_{|\bm{S}}\left[\mathbb{E}_{|\bm{S}}\left[\left(\frac{1}{\rho}D_i^{\tau,\rho(\lambda_i+\beta_i)}-\frac{1}{1-\rho}D_i^{\tau,(1-\rho)\lambda_i}\right)\bigg|\bm{D}^{\tau,\bm{\lambda}+\rho\bm{\beta}}\right]A_i^-(\bm{D}^{\tau,\bm{\lambda}+\rho\bm{\beta}})\right]\label{eq:thm7-intermediate},
    \end{align}
    where the last two steps follow from the law of total expectation. We now recall that the total demand is the sum of the control and treatment demand, i.e., $\bm{D}^{\tau,\bm{\lambda}+\rho\bm{\beta}}=\bm{D}^{\tau,\rho(\bm{\lambda}+\bm{\beta})}+\bm{D}^{\tau,(1-\rho)\bm{\lambda}}$. As a result, we can write
    \begin{align}
            \mathbb{E}_{|\bm{S}}\left[D_i^{\tau,\rho(\lambda_i+\beta_i)}\bigg|\bm{D}^{\tau,\bm{\lambda}+\rho\bm{\beta}}\right]&=D_i^{\tau,\lambda_i+\rho\beta_i}\frac{\rho(\lambda_i+\beta_i)}{\lambda_i+\rho\beta_i}, \text{ and}\nonumber\\
            \mathbb{E}_{|\bm{S}}\left[D_i^{\tau,(1-\rho)\lambda_i}\bigg|\bm{D}^{\tau,\bm{\lambda}+\rho\bm{\beta}}\right]&=D_i^{\tau,\lambda_i+\rho\beta_i}\frac{(1-\rho)\lambda_i}{\lambda_i+\rho\beta_i}.\label{eq:shadow-price-expectation-intermediate}
    \end{align}
    Substituting the two expressions above into~\eqref{eq:thm7-intermediate}, we obtain:
    \begin{align*}
        \mathbb{E}_{|\bm{S}}\left[\hat{\Delta}_{\text{SP}}^\tau\right] &= \frac{1}{\tau}\sum_{i=1}^{n_d}\mathbb{E}_{|\bm{S}}\left[\left(\frac{\lambda_i+\beta_i}{\lambda_i+\rho\beta_i}-\frac{\lambda_i}{\lambda_i+\rho\beta_i}\right)D_i^{\tau,\lambda_i+\rho\beta_i}A_i^-(\bm{D}^{\tau,\bm{\lambda}+\rho\bm{\beta}})\right]\\
        &=\frac{1}{\tau}\sum_{i=1}^{n_d}\frac{\beta_i}{\lambda_i+\rho\beta_i}\mathbb{E}_{|\bm{S}}\left[D_i^{\tau,\lambda_i+\rho\beta_i}A_i^-(\bm{D}^{\tau,\bm{\lambda}+\rho\bm{\beta}})\right].
    \end{align*}
    Fix $i=1$. Using the law of total expectation, we can expand the expectation on the right-hand side to
    \begin{align*}
        \mathbb{E}_{|\bm{S}}\left[D_1^{\tau,\lambda_1+\rho\beta_1}A_1^-(\bm{D}^{\tau,\bm{\lambda}+\rho\bm{\beta}})\right] &= \mathbb{E}_{|\bm{S}}\left[\mathbb{E}_{|\bm{S}}\left[D_1^{\tau,\lambda_1+\rho\beta_1}A_1^-(\bm{D}^{\tau,\bm{\lambda}+\rho\bm{\beta}})\Big| D_1^{\tau,\lambda_1+\rho\beta_1}\right]\right]\\
        &=\mathbb{E}_{|\bm{S}}\left[D_1^{\tau,\lambda_1+\rho\beta_1}\mathbb{E}_{|\bm{S}}\left[A_1^-(\bm{D}^{\tau,\bm{\lambda}+\rho\bm{\beta}})\Big| D_1^{\tau,\lambda_1+\rho\beta_1}\right]\right].
    \end{align*}
    Let $f(d)=\mathbb{E}_{|\bm{S}}\left[A_1^-(\bm{D}^{\tau,\bm{\lambda}+\rho\bm{\beta}})\Big| D_1^{\tau,\lambda_1+\rho\beta_1}=d\right]$. Now we apply Lemma~\ref{lem:poisson-multiply-rate} to obtain
    \begin{align*}
        \mathbb{E}_{|\bm{S}}\left[D_1^{\tau,\lambda_1+\rho\beta_1}A_1^-(\bm{D}^{\tau,\bm{\lambda}+\rho\bm{\beta}})\right] &= \mathbb{E}_{|\bm{S}}\left[D_1^{\tau,\lambda_1+\rho\beta_1}f\left(D_1^{\tau,\lambda_1+\rho\beta_1}\right)\right]\\
        &= (\lambda_1+\rho\beta_1)\tau\,\mathbb{E}_{|\bm{S}}\left[f(D_1^{\tau,\lambda_1+\rho\beta_1}+1)\right]\\
        &= (\lambda_1+\rho\beta_1)\tau\,\mathbb{E}_{|\bm{S}}\left[A_1^-(\bm{D}^{\tau,\bm{\lambda}+\rho\bm{\beta}}+\bm{e}_1)\right]\\
        &=(\lambda_1+\rho\beta_1)\tau\,\mathbb{E}_{|\bm{S}}\left[\Phi_\tau(\bm{D}^{\tau,\bm{\lambda}+\rho\bm{\beta}}+\bm{e}_1,\bm{S}^{\tau,\pi}) - \Phi_\tau(\bm{D}^{\tau,\bm{\lambda}+\rho\bm{\beta}},\bm{S}^{\tau,\pi})\right]\\
        &= (\lambda_1+\rho\beta_1)\tau\,\mathbb{E}_{|\bm{S}}\left[A_1^+(\bm{D}^{\tau,\bm{\lambda}+\rho\bm{\beta}})\right],
    \end{align*}
    where we use the definitions of the positive-demand and negative-demand subgradients in the last step. We can apply the derivation above for each $i\ge 1$, and substitute into~\eqref{eq:shadow-price-expectation-intermediate} to obtain
    \begin{align*}
        \mathbb{E}_{|\bm{S}}\left[\hat{\Delta}_{\text{SP}}^\tau\right] &= \frac{1}{\tau}\sum_{i=1}^{n_d}\frac{\beta_i}{\lambda_i+\rho\beta_i}\mathbb{E}_{|\bm{S}}\left[D_i^{\tau,\lambda_i+\rho\beta_i}A_i^-(\bm{D}^{\tau,\bm{\lambda}+\rho\bm{\beta}})\right]\\
        &= \frac{1}{\tau}\sum_{i=1}^{n_d}\frac{\beta_i}{\lambda_i + \rho\beta_i}(\lambda_i+\rho\beta_i)\tau\,\mathbb{E}_{|\bm{S}}\left[A_i^+(\bm{D}^{\tau,\bm{\lambda}+\rho\bm{\beta}})\right].
    \end{align*}
    Taking the expectation over both demand and supply, we obtain that
    \[
        \mathbb{E}\left[\hat{\Delta}_{\text{SP}}^\tau\right] = \frac{1}{\tau}\bm{\beta}\cdot \mathbb{E}\left[\bm{A}^+\right],
    \]
    which, together with the result from step 1, completes the proof.
\end{proof}

}

\subsubsection{{Asymmetric experiments}}

\begin{proof}[Proof of Theorem~\ref{thm:zero-bias-random}.]

We recall the definition of the shadow price estimator {in the fluid limit}
\[
\hat{\Delta}_{\text{SP}}=\bm{a}^\rho\cdot\bm\beta.
\]
Taking the expectation over $\rho$ uniformly random over $[0,1]$ yields
\begin{align*}
    \mathbb{E}_{\rho}\left[\hat{\Delta}_{\text{SP}}\right] &= \int_0^1\bm{a}^\rho\cdot\bm\beta d\rho\\
    &=\Psi(1)-\Psi(0) & \text{(by Proposition~\ref{prop:fundamental-theorem-calculus})}
    &=\Delta.
\end{align*}
{Similarly, in the finite-sample case, we can write:
\[
    \mathbb{E}_{\bm{D},\bm{S}}\left[\mathbb{E}_{\rho}\left[\hat{\Delta}^\tau_{\text{SP}}\right]\right]=\int_0^1\Psi_\tau'(\rho)d\rho=\Psi_\tau(1)-\Psi_\tau(0)=\Delta^\tau.
\]
Note that we can change the order of integration by Fubini's theorem.}
\end{proof}

{
\begin{proof}[Proof of Theorem~\ref{thm:shadow-price-plus}]
    We fix $\bm{\beta}\le \bm{0}$ and $\rho < 0.5$. We first recall that for any demand type $i$, $a_i\le \bar{v}_i^*$, therefore
    \begin{equation}
    \label{eq:SP-RCT-negative}
    \hat{\Delta}_{\text{SP}} = \sum_{i=1}^{n_d}a_i\beta_i \ge \sum_{i=1}^{n_d}\bar{v}^*_i\beta_i = \hat{\Delta}_{\text{RCT}}.
    \end{equation}
    Using the definition of the SP+ estimator, we can therefore write
    \[
    \hat{\Delta}_{\text{SP+}}=(1-2\rho)\hat{\Delta}_{\text{RCT}}+2\rho\hat{\Delta}_{\text{SP}} \ge (1-2\rho)\hat{\Delta}_{\text{RCT}}+2\rho\hat{\Delta}_{\text{RCT}} = \hat{\Delta}_{\text{RCT}}.
    \]
    This implies that
    \[
        \Delta - \hat{\Delta}_{\text{RCT}} \ge \Delta - \hat{\Delta}_{\text{SP+}}.
    \]
    From Theorem~\ref{thm:rct-overestimate}, we know that $\Delta - \hat{\Delta}_{\text{RCT}}\ge 0$. To complete the proof, we simply need to show that $\Delta - \hat{\Delta}_{\text{RCT}}\ge\hat{\Delta}_{\text{SP+}}-\Delta$. We can write their difference as

    \begin{align*}
        \Delta - \hat{\Delta}_{\text{RCT}}-\hat{\Delta}_{\text{SP+}}+\Delta &= 2\Delta - \hat{\Delta}_{\text{RCT}}-(1-2\rho)\hat{\Delta}_{\text{RCT}} - 2\rho\hat{\Delta}_{\text{SP}}\\
        &= 2\Delta - 2(1-\rho)\hat{\Delta}_{\text{RCT}} - 2\rho\hat{\Delta}_{\text{SP}}\\
        &= 2\int_0^1\bm{a}^\eta\cdot\bm{\beta}d\eta - 2\int_\rho^1\bm{\bar{v}^*}\cdot\bm{\beta}d\eta-2\int_{0}^\rho \bm{a}^\rho\cdot\bm{\beta}d\eta\\
        &= 2\underbrace{\int_0^\rho (\bm{a}^\eta-\bm{a}^\rho)\cdot\bm{\beta}d\eta}_{T_1} + 2\underbrace{\int_\rho^1(\bm{a}^\eta-\bm{\bar{v}^*})\cdot\bm{\beta}d\eta}_{T_2}.
    \end{align*}
    We can separetely prove each term is non-negative.
    \begin{itemize}
        \item First term: by concavity of the partial-treatment value function, we know that $\bm{a}^\eta\cdot\bm{\beta}\ge \bm{a}^\rho\cdot\bm{\beta}$ for all $\eta \le \rho$. Therefore $T_1\ge 0$.
        \item Second term: the proof of Theorem~\ref{thm:rct-overestimate} established the following statements for $\bm{\beta}\le \bm{0}$:
        \begin{align*}
            \bm{\bar{v}^*}\cdot(\bm\lambda + \bm\beta) &\le \Psi(1)\\
            \bm{\bar{v}^*}\cdot(\bm\lambda + \rho\bm\beta) &= \Psi(\rho).
        \end{align*}
        Subtracting the two above equations yields
        \begin{align*}
        (1-\rho)\bm{\bar{v}^*}\cdot\bm\beta \le \Psi(1) - \Psi(\rho)&\Leftrightarrow \int_\rho^1\bm{\bar{v}^*}\cdot\bm\beta d\eta\le \int_\rho^1\bm{a}^\eta\cdot\bm\beta d\eta\\
        &\Leftrightarrow \int_\rho^1(\bm{a}^\eta-\bm{\bar{v}^*})\cdot\bm{\beta}d\eta \ge 0\\
        &\Leftrightarrow T_2 \ge 0.
        \end{align*}
    \end{itemize}
    
\end{proof}
}

\begin{proof}[Proof of Theorem~\ref{thm:conditions-bias-reduction}.]
    Assume without loss of generality that $\bm{\beta}\ge 0$. By Proposition~\ref{prop:concave-integrable}, we recall that for any $0\le \eta \le 1$,
    \[
    \bm{a}^0\cdot\bm\beta \ge \bm{a}^\rho\cdot\bm\beta \ge \bm{a}^1\cdot\bm\beta.
    \]
    By definition,
    \begin{align*}
        \hat{\Delta}_{\text{SP}} - \Delta &= \int_0^1 (\bm{a}^{\rho} - \bm{a}^{\eta})\cdot\bm{\beta}d\eta,
    \end{align*}
    which we can bound as:
    \[
    (\bm{a}^{\rho} - \bm{a}^0)\cdot\bm\beta \le \hat{\Delta}_{\text{SP}} - \Delta \le (\bm{a}^{\rho} - \bm{a}^1)\cdot\bm\beta,
    \]
    and in absolute value
    \[
    \text{Bias}_{\text{SP}} = \abs{\hat{\Delta}_{\text{SP}} - \Delta} \le (\bm{a}^{0} - \bm{a}^1)\cdot\bm\beta.
    \]
    This proves the second result.
    
    Similarly, recalling Theorem~\ref{thm:rct-overestimate}, we can write the RCT bias as
    \[
        \text{Bias}_{\text{RCT}} = \hat{\Delta}_{\text{RCT}} - \Delta = \int_0^1 (\bar{\bm{v}}^* - \bm{a}^\eta)d\eta \ge (\bar{\bm{v}}^* - \bm{a}^0)\cdot\bm\beta.
    \]
    The last two inequalities complete the proof of the first result.
\end{proof}

\section{Secondary metrics: proofs} \label{sec:secondary-proofs}

\begin{proof}[Proof of Proposition~\ref{prop:secondary-cs}.]
\textbf{Step 1:} We start with $\bm{x^*}$, the unique and nondegenerate optimal primal solution to $\Phi_\tau(\bm{d},\bm{s})$, and write down the complementary slackness conditions on the optimal dual variables:
{
\begin{align*}
    b_j^* + a_i^* + \xi_{u_j^s,u_i^d}^* &= v_{u_j^s,u_i^d} && \text{when}~x^*_{u_j^s,u_i^d}>0,~(u_j^s,u_i^d)\in\mathcal{A}_{s\to d},\\
    b_j^*-m_w^* + \xi^*_{u_j^s,w}&= v_{u_j^s,w} &&\text{when}~x^*_{u_j^s,w}>0,~ (u_j^s,w)\in\mathcal{A}_{s\to *},\\
    m_w^*+a_i^* + \xi^*_{w,u_i^d}&= v_{w,u_i^d} && \text{when}~x^*_{w,u_i^d}>0,~(w, u_i^d)\in\mathcal{A}_{*\to d},\\
    m_u^* - m_w^* + \xi^*_{u,w} &= v_{u,w} && \text{when}~x^*_{u,w}>0, (u,w)\in\mathcal{A}_{*},\\
    a^*_{i} & =0 && \text{when}~\sum_{w:(w,u)\in\mathcal{A}}x^*_{w,u} < d_i,~u=u_i^d\in\mathcal{U}_d,\\
    b^*_{j} & =0 && \text{when}~\sum_{w:(u,w)\in\mathcal{A}}x^*_{u,w} < s_j,~ u=u_j^s\in\mathcal{U}_s,\\
    \xi^*_{u,w} & = 0 && \text{when}~x^*_{u,w} < \tau k_{u,w},~(u,w)\in\mathcal{A}.
\end{align*}}
Because the primal solution is nondegenerate, the above system of linear equations consists of {$|\mathcal{U}|+|\mathcal{A}|$} equations which uniquely determine the optimal dual solution. We can write this system in matrix form as 
{
\[
\bm{M}\begin{pmatrix}\bm{a^*}\\\bm{b^*}\\\bm{m^*}\\\bm{\xi^*}\end{pmatrix} = \bm{v}'.
\]
Each row of the matrix $\bm{M}$ is a sparse vector, and all nonzero entries are equal to 1 or -1. The first $|\mathcal{A}|$ rows have exactly three nonzeros, while the last $|\mathcal{U}|$ rows are $(|\mathcal{U}|+|\mathcal{A}|)$-dimensional unit vectors. Meanwhile, the vector $\bm{v'}$ consists of two parts: first, the sequence of $v_{u,w}$ corresponding to $x_{u,w}^*>0$, then a sequence of zeros.
}

Let $\varepsilon > 0$. Consider an alternative matching problem $\Phi_\tau^{\varepsilon}(\bm{d},\bm{s})$ in which the edge weights are given by {$v_{u,w}+\varepsilon \tilde{v}_{u,w}$}. This is a small perturbation of the original weights of the matching problem: if we take $\varepsilon$ small enough, the optimal solution $\bm{x^*}$ in $\Phi_\tau(\bm{d}, \bm{s})$ remains optimal in $\Phi_\tau^\varepsilon(\bm{d}, \bm{s})$. As a result, we can compute the dual variables for $\Phi^\varepsilon(\bm{d}, \bm{s})$ directly from $\bm{x^*}$ using {the same} complementary slackness {system:
\[
\bm{M}\begin{pmatrix}\bm{a}^\varepsilon\\\bm{b}^\varepsilon\\\bm{m}^\varepsilon\\\bm{\xi}^\varepsilon\end{pmatrix} = {\bm{v'}} + \varepsilon{\bm{\tilde{v}'}},
\]}
where the matrix $\bm{M}$ is constructed exactly as before, while the vector $\bm{\tilde{v}'}$ is constructed from the weights $\tilde{v}_{u,w}$ exactly as the vector $\bm{v'}$ was constructed from the weights $v_{u,w}$.

\textbf{Step 2:} By definition of $\Phi_\tau^{\bm{\tilde{v}}}(\cdot,\cdot)$, we can write
{
\[
\Phi_\tau^\varepsilon(\bm{d},\bm{s})=\sum_{(u,w)\in\mathcal{A}}(v_{u,w}+\varepsilon \tilde{v}_{u,w})x_{u,w}^*=\Phi_\tau(\bm{d},\bm{s}) + \varepsilon\Phi^{\bm{\tilde{v}}}_\tau(\bm{d},\bm{s}).
\]}
We obtain a new expression for the marginal values {$a_i^{\bm{\tilde{v}}}$:
\begin{align*}
\varepsilon a_i^{\bm{\tilde{v}}} & = \varepsilon\Phi^{\bm{\tilde{v}}}(\bm{d}+\bm{e}_i,\bm{s})-\varepsilon\Phi^{\bm{\tilde{v}}}(\bm{d},\bm{s})\\
&= \left(\Phi^\varepsilon(\bm{d}+\bm{e}_i,\bm{s})-\Phi(\bm{d}+\bm{e}_i,\bm{s})\right)-\left(\Phi^\varepsilon(\bm{d},\bm{s})-\Phi(\bm{d},\bm{s})\right)\\
&=\left(\Phi^\varepsilon(\bm{d}+\bm{e}_i,\bm{s})-\Phi^\varepsilon(\bm{d},\bm{s})\right)-\left(\Phi(\bm{d}+\bm{e}_i,\bm{s})-\Phi(\bm{d},\bm{s})\right)\\
&=a_i^\varepsilon - a_i^*,
\end{align*}}
where the {penultimate} equality follows from {Eq.~\eqref{eq:downward-dual}.}

\textbf{Step 3:} We know from step 1 that we can obtain $a_i^*$ and $a_i^\varepsilon$ by solving two linear systems. In other words, we can write:
{
\begin{align*}
    a_i^*&=\left(\bm{M}^{-1}{\bm{v'}}\right)_i,\\
    a_i^\varepsilon &= \left(\bm{M}^{-1}({\bm{v'}}+\varepsilon{\bm{\tilde{v}'}})\right)_i.
\end{align*}

Applying these identities to the result from step 2, we obtain
\begin{align*}
     a_i^{\bm{\tilde{v}}} &= \frac{1}{\varepsilon}\left[\left(\bm{M}^{-1}({\bm{v'}}+\varepsilon{\bm{\tilde{v}'}})\right)_i-\left(\bm{M}^{-1}{\bm{v'}}\right)_i\right]\\
     &=\frac{1}{\varepsilon}\left(\bm{M}^{-1}{\bm{v'}}+\varepsilon\bm{M}^{-1}{\bm{\tilde{v}'}}-\bm{M}^{-1}{\bm{v'}}\right)_i\\
     &=\left(\bm{M}^{-1}{\bm{\tilde{v}'}}\right)_i.
\end{align*}
}
\end{proof}
\end{document}